\numberwithin{equation}{section}
\date{}
\def\R{\mathbb{R}}
\theoremstyle{plain}
\newtheorem{thm}{Theorem} 
\theoremstyle{definition}
\newtheorem{exmp}[thm]{Example} 
\newtheorem{rmk}[thm]{Remark}
\newtheorem{lmm}[thm]{Lemma}
\newtheorem{prop}[thm]{Proposition}
\title{Quasi-neutral Dynamics in a  Coinfection System with $N$ Strains and Asymmetries along Multiple Traits}
\author{Thi Minh Thao LE$^*$, Erida GJINI$^\perp$, Sten MADEC$^+$\\
\small{*, $^+$ Laboratory of Mathematics, University of Tours, Tours, France}\\
\small{$^\perp$ Center for Computational and Stochastic Mathematics, Instituto Superior Tecnico, Lisbon, Portugal}\\
\small{\texttt{$^*$Thi-minh-thao.Le@lmpt.univ-tours.fr, $^\perp$ erida.gjini@tecnico.ulisboa.pt, $^+$Sten.Madec@lmpt.univ-tours.fr}}\\}
\begin{document}
\maketitle

\begin{abstract}
Understanding the interplay of different traits in a co-infection system with multiple strains has many applications in ecology and epidemiology. Because of high dimensionality and complex feedbacks between traits manifested in infection and co-infection, the study of such systems remains a challenge. In the case where strains are similar (quasi-neutrality assumption), we can model trait variation as perturbations in parameters, which simplifies analysis. Here, we apply singular perturbation theory to many strain parameters simultaneously, and advance analytically to obtain their explicit collective dynamics. We consider and study such a quasi-neutral model of susceptible-infected-susceptible (SIS) dynamics among $N$ strains which vary in 5 fitness dimensions: transmissibility, clearance rate of single - and co-infection, transmission probability from mixed coinfection, and co-colonization vulnerability factors encompassing cooperation and competition. This quasi-neutral system is analyzed with a singular perturbation method through an appropriate slow-fast decomposition. The fast dynamics correspond to the embedded neutral system, while the slow dynamics are governed by an $N$-dimensional replicator equation, describing the time evolution of strain frequencies. The coefficients of this replicator system are pairwise invasion fitnesses between strains, which, in our model, are an explicit weighted sum of pairwise asymmetries along all trait dimensions. Remarkably these weights depend only on the parameters of the neutral system. Such model reduction highlights the centrality of the neutral system for dynamics at the edge of neutrality, and exposes critical features for maintenance of diversity. \\ \\
\textbf{Keywords}. quasi-neutrality, SIS multi-strain dynamics, co-colonization, singular perturbation, slow-fast dynamics, Tikhonov's Theorem, replicator equation, high-dimensional polymorphism, frequency dynamics
\end{abstract}

\section{Introduction} \label{sec1}

Multiple infections are ubiquitous in nature \cite{Balmer}. They may occur between pathogen strains of the same species or between different species \cite{Cobey2012,Birger,Warren}, and have implications for virulence evolution and maintenance of various polymorphisms among infectious agents \cite{van1995dynamics, mosquera1998evolution,alizon2013co,alizon2013multiple}. The importance of multiple infection for antibiotic resistance and vaccination effects in multi-strain systems has also been increasingly highlighted \cite{Lipsitch1997,birger2015potential}. Due to its inherent difficulties, multiple infection has only been tackled in a limited manner by mathematical models so far. A majority of studies focus on coexistence and competitive exclusion criteria for coinfection systems with $N=2$ or $N=3$ strains \cite{chen2017fundamental,gjini2016direct,Gjini1,pinotti2019interplay}. A few studies, using arbitrary system size, derive analytical results for any number of coinfecting strains $N$ \cite{adler1991dynamics,Madec2}. But the vast majority of $N$-strain coinfection models are entirely based on simulations \cite{Cobey2012,davies2019within}, with limited analytical insight and organic syntheses for the mechanisms of emergent dynamics.

In this article, 
we uncover the subtle structure of coinfection model with $N$ strains.
 We introduce a general model to describe the population dynamics of multiple strains circulating in a host population with the possibility of co-infection. In particular, we focus on modeling the host-to-host transmission of different strains, using the SIS (susceptible - infected - susceptible) compartmental framework for endemic diseases. There are two sources of complexity in the model: i) the number of strains, which increases quadratically the dimensionality of the system, and ii) all the fitness dimensions in which the strains may vary. The latter is the main novelty of our framework. 

We present a method for approximating the solution of this SIS- N-strain co-infection system, under a quasi-neutral assumption for the strain-defining parameters. To that end, we first analyze multi-strain co-infection system with symmetric traits. Then, based on the theoretical results in \cite{Fenichel, Kuehn, Tik} and their applications to similar models in \cite{Gjini, Gjini1, Madec2}, we use the slow-fast dynamics approach and the method of multiple timescales to approximate the solution of systems with non-symmetric traits. 

Extending the foundational work in \cite{Gjini, Gjini1, Madec2}, this article studies a more general dynamic system, with perturbations in many more dimensions of variation across strains, namely transmission, clearance rates and within-host competitiveness, besides the co-colonization vulnerability parameters (Figure \ref{scheme}). The complexity of this general problem is reduced by the quasi-neutral assumption, with each parameter constrained to be close to its default value, allowing us to leverage the neutral system to approximate the quasi-neutral system.
The difficulty lies in reformulating the original system starting from a neutral component plus perturbations, in terms of                                   slow fast dynamics consisting of a fast sub-system  and a slow sub-system.  
Thanks to the singular perturbation theory in \cite{Ver} and the Tikhonov's Theorem, we expect to find explicitly the emergent system which describes the slow dynamics.

More precisely,  we find how to   rewrite the original system in the form $\dfrac{dx}{d t} = \epsilon f(x,y,t,\epsilon)$ and $\dfrac{dy}{d t} = g(x,y,t,\epsilon)$ where $x$ describes the slow dynamics and $y$ the fast dynamics. Taking $\epsilon=0$ we obtain the degenerate fast system $\dfrac{dx}{d t} =0$ and $\dfrac{dy}{d t} = g(x,y,t,0)$. Under appropriate assumptions, this fast system admit a (degenerate) attractor called the {\em slow manifold} of the form $y=\phi(x,t)$. 
Then, at the {\it slow time scale} $\tau=\dfrac{t}{\epsilon}$ we obtain the {\it slow} dynamics on this slow manifold as $\dfrac{ dx}{d \tau}=f(x,\phi(x,t),\tau,0)$ that needs to be computed  explicitly.
The singular perturbation theory makes the link between this slow dynamics and the dynamics of the original system for  $0<\epsilon\ll 1$.\\


Even though we have an intuition for how the final model approximation in terms of fast-slow dynamics should work, with the neutral model as the organizing centre \cite{Golu}, it is not at all obvious from the start which should be the necessary mathematical steps when multiple perturbations occur and interact at the same time between $N$ strains. In this article we uncover these steps, which ultimately lead us to a similar replicator equation to the one derived in \cite{Madec2} but now more complete because it involves variation among strains along more fitness dimensions. Indeed, we obtain an $N$ dimensional replicator equation for strain frequencies over long time in terms of their pairwise invasion fitness matrix, and this connects our multi-strain coinfection framework in an endemic setting with the work of \cite{Hofbauer} which extensively researches this well-known model, and shows its contribution to evolution and game theory. With this simplifcation, qualitative and quantitative aspects of the competitive dynamics between $N$ strains, leading to regimes of exclusion, coexistence, multi-stability, family of cycles or chaotic behavior can be investigated, and directly linked to their trait variations.
\begin{figure}[htb!]
	\centering
	\includegraphics[width= 0.8 \textwidth]{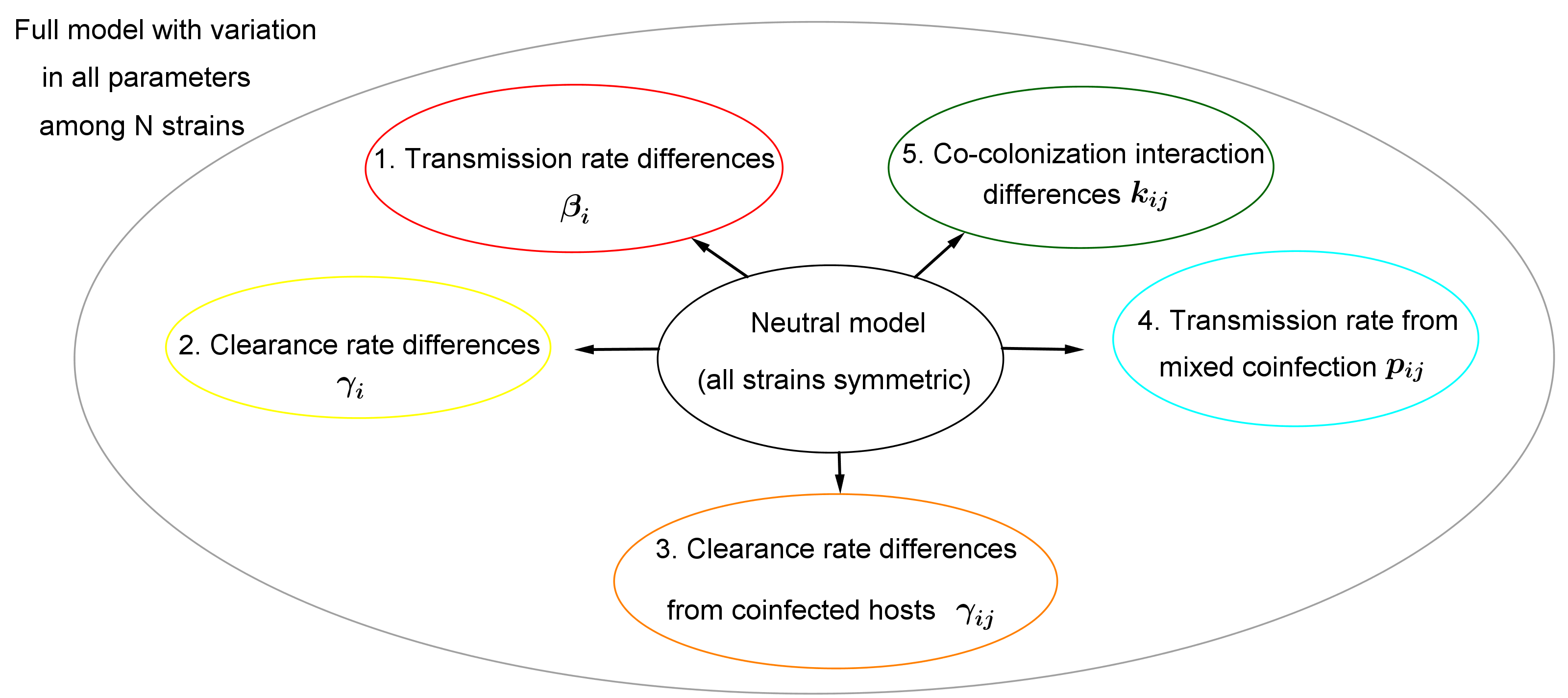}
	\caption{\textbf{Schematic description of the spirit of our study.} We study the full $N$-strain SIS model with coinfection like in \cite{Madec2, Gjini1}, but here include variation in several parameters among strains, besides co-colonization interactions. For this, we consider the neutral model as the organizing center of the dynamics, and the slow-fast form for each case of trait variation. Finally, we combine all cases of singular perturbation in each parameter to obtain the general system. Our result is the dynamics in the slow manifold, which corresponds to a replicator system for $N$ strain frequencies, governed by the pairwise invasion fitness matrix.}\label{scheme}
\end{figure}

The paper is organized as follows. Section \ref{sec2} outlines the general systems studied in this paper with corresponding quasi-neutral and neutral models. Then it introduces Tikhonov's theorem and the expansion theorem used to approximate the target model. Section \ref{section3} presents the main framework used to decompose the dynamics into fast and slow components, accompanied with lemmas and concrete steps. In this section, we state the main result: the replicator system for strain frequencies, whose coefficients' matrix is defined by pairwise invasion fitnesses. Section \ref{sec3} is devoted to the explicit computations for perturbations in each trait, and ends with the proof for the error estimate between the original system and the slow-fast approximation. In Section \ref{sec5} we provide illustration by numerical simulations about the different regimes of system behaviour, including coexistence, competitive exclusion and more complex dynamics. This helps to contextualize the competitive outcomes between strains as a function of parameters. Finally, in Section \ref{sec6} we close with conclusions and a discussion.

\section{System, methods and results} \label{sec2}
This initial section aims to provide a general description of the dynamics followed by an outline of the analytical framework applied. We first introduce the general structure, then subsequently present explicitly the steps of our approach, consisting in the quasi-neutral model, neutral model and slow-fast model. We then present the Tikhonov's theorem, which is the key tool we use to approximate the singular perturbation dynamics efficiently. The important lemmas and main results are also stated in this section.
\subsection{The general SIS coinfection model with $N$ strains and some initial analysis} \label{sec2.1}
The dynamics studied in this article groups the pathogen types in $N$ subsets, indexed by $i$, $1 \leq i \leq N$. With a set of ordinary differential equations, we then track the proportion of hosts in $1+N+N^2$ compartments: susceptible: $S$, hosts colonized by strain-$i$: $I_i$, hosts co-colonized by strain-$i$ then strain-$j$: $I_{ij}$. Notice that we include also same strain coinfection, as argued in \cite{Madec2}. We formulate the general model based on the same structure as that in \cite{Madec2} but here allow for strains to vary in their transmission rates $\beta_i$, clearance rates of single infection $\gamma_i$ (or duration of carriage $1/\gamma_i$), clearance rates from mixed co-colonization $\gamma_{ij}$, within-host competition reflected in relative transmissibilities from mixed coinfected hosts ($p_{ij}^{i}$ and $p_{ji}^i$), as well as co-colonization vulnerabilities $k_{ij}$, already studied in \cite{Madec2}.
\begin{equation} \label{2.1}
\left\{
\begin{aligned}
&\dfrac{dS}{dt} &=& r(1-S) + \sum_{i=1}^{N}\gamma_i I_i + \sum_{i,j=1}^{N}\gamma_{ij}I_{ij} - S\sum_{i=1}^{N}\beta_iJ_i,\\
&\dfrac{dI_i}{dt} &=& \beta_iJ_i S - (r + \gamma_i) I_i -  I_i\sum\limits_{j=1}^{N}k_{ij}\beta_jJ_j, \quad &1 \leq i \leq N,\\
&\dfrac{dI_{ij}}{dt} &=& k_{ij}I_i \beta_jJ_j - (r + \gamma_{ij})I_{ij}, \quad &1 \leq i,j \leq N
\end{aligned}
\right.
\end{equation}
where $J_i$ is proportion of all hosts transmitting strain $i$, including singly- and co-colonized hosts and has the explicit formula
\begin{equation} \label{2.2}
J_i = I_i + \sum_{j=1}^{N}\left(p_{ij}^iI_{ij} + p_{ji}^iI_{ji}\right).
\end{equation}
Note that $\beta_iJ_i$ is the force of infection of strain $i$, for all $i$. All mixed coinfection hosts, harboring strain $i$ (and $j$), in any order, whether acquired first or second, can transmit strain $i$ and the two probabilities of transmission are denoted by $p_{ij}^i$ and $p_{ji}^i$. The corresponding probabilities to transmit the other strain for such hosts, is simply $1- p_{ij}^i$ and $1-p_{ji}^i$ respectively. Thus we allow for variation between strains in both transmissibility from mixed coinfection, and in the benefit gained within-host for transmission when landed there first (a precedence effect). In \eqref{2.1}, for $1 \leq i,j \leq N$, parameters are summarized in Table \ref{tab:table1}.
\begin{table}[htb!]
	\caption{\textbf{Conventions and notations of parameters defining strains in our model, and host turnover. Under strain similarity assumptions, we can write each trait using a common reference for all strains, and express the variation as a deviation from neutrality, with $\epsilon$ a small number between 0 and 1.}}
	\label{tab:table1}
	\begin{center}
		\scalebox{1}{
			\begin{tabular}{ l p{2cm} p{9.5cm} p{3cm} } 
				\hline
				& \textbf{Parameter} & \textbf{Interpretation} & Strain similarity\\
				\hline
				1. & $\beta_i $ & Strain-specific transmission rates & $\beta_i = \beta\left(1 + \epsilon b_i\right)$\\
				
				2. & $\gamma_{i} $ & Strain-specific clearance rates of single colonization & $\gamma_{i} = \gamma\left(1 + \epsilon\nu_i\right)$ \\ 
				
				3. & $\gamma_{ij}$ & Clearance rates of  co-colonization with $i$ and $j$ & $\gamma_{ij} = \gamma\left(1 + \epsilon u_{ij}\right)$\\ 
				
				4. & $p_{ij}^s $ & Transmission probability of strain $s \in \{i,j\}$ from a host co-colonized by strain-$i$ then strain-$j$, $\left(p_{ij}^i + p_{ij}^j = 1\right)$ & 
				$p_{ij}^s = \dfrac{1}{2} + \epsilon\omega_{ij}^s$\\
				
				5. & $k_{ij}$ & Relative factor of altered susceptibility to co-colonization by strain $j$ when a host is already colonized by strain $i$ & $k_{ij} = k + \epsilon\alpha_{ij}$\\

				\hline
				& $r$ & Susceptible recruitment rate (equal to natural mortality rate) \\
				& $R_0$ & Basic reproduction number & $R_0 = \frac{\beta}{\gamma + r}$\\
				\hline
		\end{tabular}
	}
	\end{center}
\end{table}
Summing up all the equations of \eqref{2.1} on both sides yields the equation for total mass
\begin{equation} \label{2.3}
\dfrac{d}{dt} \left(S + \sum_{i=1}^{N}I_i + \sum_{i,j=1}^{N}I_{ij}\right) = r(1-S) - r\left(\sum_{i=1}^{N}I_i + \sum_{i,j=1}^{N}I_{ij}\right),
\end{equation}
which leads to $S + \sum_{i=1}^{N}I_i + \sum_{i,j=1}^{N}I_{ij} = 1 - e^{-rt}$. Hence, $S + \sum_{i=1}^{N}I_i + \sum_{i,j=1}^{N}I_{ij}$ tends to 1 as $t \to \infty$.\\ \\
We want to study a system whose host population is invariant. Such an expectation leads to the assumption that, \eqref{2.1} has the same recruitment rate of susceptibility host and mortality rate of strains. It is plausible to from now on assume that the total population size is constant and rescaled to unit. We also take the system \eqref{2.1} as given the initial conditions $S(0) + \sum_{i=1}^{N}I_i(0) + \sum_{i,j=1}^{N}I_{ij}(0) = 1$, which implies that the total population size is always one for any time. Thus our compartmental variables can be taken to reflect proportions of host in different epidemiological states.

\subsection{Quasi-neutral system and new variables}\label{sec2.2}
A straightforward understanding of \eqref{2.1} is not possible due to its complexity, high-dimensional parameter space and number of equations. However, for indistinguishable strains, i.e. if all the parameters do not depend on the strain $i$, we obtain the so-called \textit{neutral system} which is analytically tractable (see \cite{Gjini,Gjini1,Madec2}). In this text, we make a \textit{quasi neutral} assumption by assuming that the parameters are nearly equal, because the strains are similar. Without loss of generality we can take the same epsilon in all parameters with the perturbations written in the form presented in table \ref{tab:table1}. 
For the sake of simplicity, we denote the inverse duration of a carriage episode by strain $i$ with $m_i = r + \gamma_i$, of a co-carriage episode by strains $i$ and $j$ with $m_{ij} = r + \gamma_{ij}$ and the corresponding inverse duration of carriage if all strains were equivalent with $m = r + \gamma$.

To work on the neutral system, it's useful to denote some new state variables, including  the total `mass' of singly-infected hosts $I$, the total `mass' of doubly-infected hosts $D$, and the total `mass' of infected hosts $T=I+D$. According to these definitions of $T,\;I,\;D$, we have the formulae:
\begin{equation} \label{2.5}
 I = \sum_{i=1}^N I_i, \quad D=\sum_{i,j=1}^N I_{ij},
\quad T = I+D.
\end{equation}
It can be easily deduced from \eqref{2.5} together with $\omega_{ij}^i + \omega_{ji}^j = 0$ that $\sum_{i=1}^N J_i=T$.
Thanks to these new variables, the original system \eqref{2.1} can be rewritten into the extensive new form
\begin{equation} \label{2.8}
\left\{
\begin{aligned}
&\dfrac{dS}{dt} &=& r(1-S) + \gamma T + \epsilon \gamma \left(\sum_{i=1}^N  \nu_i I_i + \sum_{i,j=1}^N  u_{ij}I_{ij}\right) - \beta ST - \epsilon\beta S \sum_{i=1}^{N}b_i J_i \\
&\dfrac{dT}{dt} &=& \beta ST - mT + \epsilon \beta S \sum_{i=1}^{N}b_i J_i  -\epsilon\gamma\left(\sum_{i=1}^{N}\nu_iI_i + \sum_{i,j=1}^N  u_{ij}I_{ij}\right)\\
&\dfrac{dI_i}{dt} &=& \beta J_i S+\epsilon\beta b_i J_iS -(m + \epsilon\gamma\nu_i)I_i - \beta I_i \sum_{j=1}^{N}\left(k + \epsilon \alpha_{ij}\right)\left(1 + \epsilon b_j\right)J_j \\
&\dfrac{dJ_i}{dt} &=& \beta(1+\epsilon b_i)J_i S - \beta I_i\sum_{j = 1}^N(k + \epsilon \alpha_{ij})(1+\epsilon b_j)J_j   
-\epsilon\gamma\left[\nu_iI_i + \sum_{j = 1}^N\left((\frac12+\epsilon\omega^i_{ij}) u_{ij}I_{ij} + (\frac12+\epsilon\omega^i_{ji}) u_{ji}I_{ji}\right)\right]\\
&&&\quad- mJ_i +\beta\sum_{j=1}^{N}\left((\frac12+\epsilon\omega_{ij}^i)(k + \epsilon \alpha_{ij})(1+\epsilon b_j)I_i J_j + (\frac12+\epsilon\omega_{ji}^i)
\left(k + \epsilon \alpha_{ji}\right)(1+\epsilon b_i)I_j J_i\right)\\
&\dfrac{dI}{dt} &=& \beta T S + \epsilon\beta S\sum_{i=1}^{N}b_i J_i - mI - \epsilon\gamma\sum_{i=1}^N \nu_iI_i 
- \beta\sum_{i=1}^N I_i\left(\sum_{j = 1}^N (k + \epsilon \alpha_{ij})(1+\epsilon b_j)J_j\right)\\
&\dfrac{dI_{ij}}{dt} &=& \beta\left(k + \epsilon \alpha_{ij}\right)(1 + \epsilon b_j)I_i J_j - (m + \epsilon\gamma u_{ij})I_{ij}.
\end{aligned}
\right.
\end{equation}
This system has the generic  form $\dfrac{dX}{dt} = \tilde{F}(X,\epsilon)$ where $X = (X_1,X_2,\dots,X_{\tilde{n}} ) \in \mathbb{R}^{\tilde{n}}$ (for some integer $\tilde{n}$) and is equivalent to $\dfrac{dX}{dt} = F(X) + O(\epsilon)$ after some algebraic transformations. In ours case, the part $\dfrac{dX}{dt} = F(X)$ is known as the \textit{neutral system}, consistently stays unaltered and be investigated in the subsection \ref{sec2.3}. It is important to note that this neutral system is structurally unstable. Then, the part $O\left(\epsilon\right)$ is a singular perturbation of the neutral system. To treat such an emergence by Tikhonov's theorem, it's essential to rewrite $\dfrac{dX}{dt} = F(X) + O(\epsilon)$ into an equivalent slow-fast form
\begin{equation} \label{2.4}
\left\{
\begin{aligned}
&\dfrac{dx}{dt} = &\epsilon\left(f(x,y) + O(\epsilon)\right)\\
&\dfrac{dy}{dt} = &g(x,y) + O(\epsilon)
\end{aligned}
\right.
\end{equation}
where $y \in \mathbb{R}^{n_y}$ is the fast variable and $x \in \mathbb{R}^{n_x}$ is the slow variable (with $n_x+n_y=\widetilde{n}$). In general,  the finding of this slow-fast reformulation is  strongly dependent on the specific system. Here, it is achieved thanks to the ansatz \eqref{2.24} which is  yielded from the study of the neutral system.\\
Hence, we start to study the important {\it neutral system} which is obtained for $\epsilon = 0$ in \eqref{2.8}. This study yields  the definition of the appropriate  slow and fast variables  $(v_i,z_i)$. These variables together with the ansatz \eqref{2.24} are the key for the slow-fast study of the next section. \\ 

\subsection{Neutral system, $\epsilon = 0$}\label{sec2.3}
Taking $\epsilon = 0$ in \eqref{2.8} leads to the so-called {\it Neutral System\footnote{The name {\it neutral system} comes from the fact that if $\epsilon=0$ then the parameters do not depend on the strains as in the neutral theory, and the model describes indistinguishable strains. 
}} for $S,T,I,I_i, J_i, I_{ij}$ which reads after some simplifications:
\begin{equation} \label{2.9}
\left\{
\begin{aligned}
&\dfrac{dS}{dt} = r(1-S) + \gamma T -  S\beta T  \\
&\dfrac{dT}{dt} = S\beta T - m T \\
&\dfrac{dI}{dt} = \beta T S - (m+k\beta T) I  \\
&\dfrac{dI_i}{dt} = \beta J_i S -m I_i -  k I_i\beta T, \quad &&1 \leq i \leq N \\
&\dfrac{dJ_i}{dt} = (\beta  S-m )J_i  + \frac{1}{2}\beta kI J_i- \dfrac{1}{2}\beta kI_i T , &&1 \leq i \leq N \\
&\dfrac{dI_{ij}}{dt} = k\beta I_i J_j - m  I_{ij}, &&1 \leq i,j \leq N.
\end{aligned}
\right.
\end{equation}
Such a triangular structure of this system enables to successively consider the subsystems for $(S,T)$, $I$, $(I_i,J_i)$ and $I_{ij}$.\\ \\
$\large\bullet$ \textit{Firstly}, we consider the neutral system for $S,T$ as following
\begin{equation} \label{2.10}
\left\{
\begin{aligned}
&\dfrac{dS}{dt} &=& m(1-S) - \beta ST\\
&\dfrac{dT}{dt} &=& -mT + \beta ST
\end{aligned}
\right.
\end{equation}
This system is a classical. As in \cite{Ma}, we define the basic reproduction number as $R_0 = \dfrac{\beta}{m}$.
If $R_0>1$ then it admits a positive steady state  $(S^*,T^*)$ where $S^* = \dfrac{1}{R_0}$ and $T^*= 1-S^*$.\\ \\
We now recall a crucial proposition, which follows the definition of $S^*$ and $T^*$.
\begin{prop} \label{thm1} Assume that $S(0)>0$ and $T(0)>0$.
	If $R_0 \leq 1$ then the solution $S,T$ of system \eqref{2.10} tends to $(1,0)$. Otherwise, it tends to $(S^*,T^*)$ asymptotically.
\end{prop}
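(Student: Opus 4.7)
The plan is to exploit the exponential convergence of the total mass in order to reduce the two-dimensional system to a scalar asymptotically autonomous logistic equation for $T$. Adding the two equations of \eqref{2.10} yields $\frac{d}{dt}(S+T) = m(1-(S+T))$, so $P(t) := S(t)+T(t) = 1 + (P(0)-1)e^{-mt}$ converges exponentially to $1$ regardless of $R_0$. Positivity is preserved because $S'|_{S=0} = m > 0$ and $\{T=0\}$ is invariant, so every trajectory starting with $S(0),T(0)>0$ stays in the bounded positively invariant set $\{S,T \geq 0,\; S+T \leq \max(1,P(0))\}$ with $T(t)>0$ for all $t$.

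Substituting $S = P(t)-T$ into the $T$-equation produces the scalar non-autonomous ODE
\begin{equation*}
\frac{dT}{dt} \;=\; \beta T\bigl((P(t)-1/R_0) - T\bigr),
\end{equation*}
an asymptotically autonomous logistic equation whose limit form is $T' = \beta T(T^*-T)$ with $T^* = 1-1/R_0$. The dichotomy in the statement of the proposition then corresponds exactly to the sign of $T^*$.

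Convergence is established by a standard sandwich argument. Fix $\eta>0$; for $t$ large enough $|P(t)-1|\leq \eta$, and therefore
\begin{equation*}
\beta T\bigl(T^*-\eta - T\bigr) \;\leq\; \frac{dT}{dt} \;\leq\; \beta T\bigl(T^*+\eta - T\bigr).
\end{equation*}
Comparison with the two autonomous logistic ODEs on the right and left (each being classical: a positive solution converges to the carrying capacity when it is positive, and to $0$ when it is nonpositive) yields, in the supercritical case $R_0>1$, the squeeze $T^*-\eta \leq \liminf T(t) \leq \limsup T(t) \leq T^*+\eta$ for all sufficiently small $\eta$; letting $\eta \to 0$ gives $T(t)\to T^*$, and hence $S(t) = P(t)-T(t)\to S^*$. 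In the subcritical/critical case $R_0\leq 1$, where $T^*\leq 0$, the upper comparison already forces $\limsup T(t) \leq \max(0, T^*+\eta) = \eta$ for every $\eta>0$, hence $T\to 0$ and $S\to 1$.

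The delicate point is the critical case $R_0=1$, in which $T^*=0$ and the Jacobian of the limit equation at the disease-free equilibrium has a zero eigenvalue, so exponential decay is no longer automatic. The sandwich above still gives $T\to 0$ by letting $\eta\to 0$; for a sharper rate one observes that the equation reduces to $T' = \beta T\bigl((P(0)-1)e^{-mt} - T\bigr)$ and applies the Bernoulli substitution $u=1/T$, which linearises it into $u' + \beta(P(0)-1)e^{-mt}u = \beta$. The associated integrating factor is bounded uniformly in $t$, so $u(t)$ grows at least linearly and $T(t) = O(1/t)\to 0$. This degenerate linearisation is the only step requiring extra care; everything else reduces to routine one-dimensional comparison.
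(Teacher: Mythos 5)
Your proof is correct. Note that the paper itself gives no argument for this proposition: it simply defers to the reference \cite{Murray}, so there is no in-text proof to compare against. What you supply is the classical self-contained route that such references take for the SIS model: the total mass $P=S+T$ obeys $P'=m(1-P)$ and converges exponentially to $1$, which reduces the planar system to an asymptotically autonomous logistic equation $T'=\beta T\bigl((P(t)-1/R_0)-T\bigr)$, settled by a two-sided comparison with autonomous logistic equations. All the individual steps check out: the positivity and invariance claims, the identity $m/\beta=1/R_0$ giving the limit equation $T'=\beta T(T^*-T)$ with $T^*=1-1/R_0$, the squeeze $T^*-\eta\leq\liminf T\leq\limsup T\leq T^*+\eta$ in the supercritical case, and the bound $\limsup T\leq\eta$ when $T^*\leq 0$. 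Your treatment of the critical case $R_0=1$ is a genuine addition: the sandwich already yields $T\to 0$, and the Bernoulli substitution $u=1/T$ with the uniformly bounded integrating factor $\exp\bigl(\beta(P(0)-1)(1-e^{-mt})/m\bigr)$ correctly gives the sharper rate $T(t)=O(1/t)$, which is more than the proposition asks for but correctly identifies where exponential convergence fails. The only cosmetic caveat is that the lower logistic comparison in the supercritical case needs $\eta<T^*$ and $T(t_\eta)>0$, both of which you have secured earlier; it would be worth saying so explicitly when invoking it.
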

The proof for this Proposition can be found in \cite{Murray}.\\ \\
$\large\bullet$ \textit{Secondly}, we prove that $I(t) \to I^*:= \dfrac{mT^*}{m + \beta k T^*}$.\\ \\
Indeed, substitute $(S,T)$ by $\left(S^* + (S - S^*), T^* + (T - T^*)\right)$ into the equation of $I$ in \eqref{2.10} then make some manipulations to obtain
\begin{equation} \label{2.11}
\dfrac{dI}{dt} = m T^*-(m+\beta k T^*)I + \left[\beta S^*\left(T - T^*\right) + \beta T^*\left(S - S^*\right) + \beta\left(T - T^*\right)\left(S - S^*\right)\right].
\end{equation}
Consider the equation
\begin{equation} \label{2.12}
\dfrac{d\tilde{I}}{dt} = m T^*-(m+\beta k T^*)\tilde{I}
\end{equation}
which has the explicit solution $\tilde{I}(t) = \dfrac{mT^*}{m+\beta k T^*}\left(1- \dfrac{m+\beta k T^*}{mT^*}I(0)\exp\left(-(m+\beta k T^*)t\right) \right)$.
We simultaneously have the equation for $I - \tilde{I}$ as follows
\begin{equation} \label{2.13}
\dfrac{d}{dt}\left(I - \tilde{I}\right) = -(m+\beta k T^*)\left(I - \tilde{I}\right) + \left[\beta S^*\left(T - T^*\right) + \beta T^*\left(S - S^*\right) + \beta\left(T - T^*\right)\left(S - S^*\right)\right].
\end{equation}
Set $f\left(t\right) = \beta S^*\left(T - T^*\right) + \beta T^*\left(S - S^*\right) + \beta\left(T - T^*\right)\left(S - S^*\right)$ then $f\left(t\right) \to 0$ asymptotically when $t \to \infty$, by Proposition \ref{thm1}. It's easy to see $\left(I - \tilde{I}\right) = \exp\left(-(m+\beta k T^*)t\right)\left(\int_{0}^{t}\exp\left((m+\beta k T^*)s\right)f(s)ds + C\right)$, with $C$ is some suitable constant. Hence, $I(t) - \tilde{I}(t) \to 0$ when $t \to \infty$ then leads to $I(t) \to I^*$ as $t \to \infty$.\\ \\
For  later reference, we also write their equilibrium values in the neutral system
\begin{equation} \label{2.6}
\begin{aligned}
S^* = \dfrac{m}{\beta}, \qquad T^* = 1 - \dfrac{m}{\beta}, \qquad I^* = \dfrac{mT^*}{m + \beta k T^*}, \qquad D^* = T^*-I^*=\dfrac{\beta k {T^*}^2}{m + \beta k T^*}.
\end{aligned}
\end{equation}
$\large\bullet$\textit{Thirdly}, from \eqref{2.9}, we also have the neutral model for $I_i,J_i$ for all $1 \leq i \leq N$. This  is the  very important part which gives crucial insight for  $0<\epsilon \ll 1$ in the next section. For now, $\epsilon = 0$ and substitute $(S,T,I)$ by the limit $(S^*,T^*,I^*)$, we obtain the (degenerate) linear system
\begin{equation} \label{2.14}
\begin{aligned}
\dfrac{d}{dt}\begin{pmatrix}
I_i \\ J_i
\end{pmatrix} =& \begin{pmatrix}
-(m+\beta k T^*) & m \\ -\dfrac{\beta k T^*}{2} & \dfrac{\beta k I^*}{2}
\end{pmatrix} \begin{pmatrix}
I_i \\J_i 
\end{pmatrix}
\end{aligned}.
\end{equation}
Set $A = \begin{pmatrix}
-(m+\beta k T^*) & m \\ -\dfrac{\beta k T^*}{2} & \dfrac{\beta k I^*}{2}
\end{pmatrix} $, $D^* = T^* - I^*$  and
\begin{equation} \label{2.15}
P = \begin{pmatrix}
 2T^*& I^* \\
 D^*& T^*
\end{pmatrix}, \qquad P^{-1} = \dfrac{1}{|P|}\begin{pmatrix}
T^* & -I^* \\ -D* & 2T^*
\end{pmatrix}\text{ and for $i=1,\cdots,N$ }
\begin{pmatrix}
v_i \\ z_i
\end{pmatrix} = P^{-1} \begin{pmatrix}
I_i \\ J_i
\end{pmatrix}
\end{equation}
We have $A=P \begin{pmatrix}0&0\\0&-\xi \end{pmatrix}P^{-1}$  where $\xi = m + \beta k T^* - \dfrac{1}{2}\beta k I^*>m + \dfrac{1}{2}\beta k (T^* - I^*) > 0$ and $|P| =  2{T^*}^2-I^*D^*  >0$.
\\ \\ 
From  \eqref{2.14} and \eqref{2.15}, we  infer an equation for $\begin{pmatrix}
v_i \\ z_i
\end{pmatrix}$ for each $1 \leq i \leq N$:
\begin{equation} \label{2.16}
\left\{\begin{aligned}
&\dfrac{dv_i}{dt} &=& -\xi v_i\\
&\dfrac{dz_i}{dt} &=& 0. 
\end{aligned} \right.
\end{equation}
This step of changing to $(v_i,z_i)$ plays an important role. Since under these new variables, we can rewrite into the slow-fast form. It allows us to apply the Tikhonov's Theorem introduced in the next subsection.\\
Let us remark that $z_i$ is exactly frequency of strain $i$ in the total of infected, see the proof in \cite{Madec2}.\\ \\
$\large\bullet$ \textit{Fourthly},  the $N^2$ last equations for $I_{ij}$ in \eqref{2.9} yields $1 \leq i \leq N$
\begin{equation} \label{2.17}
\dfrac{dI_{ij}}{dt} = \beta k I_i J_j - m I_{ij}.
\end{equation}
Whose dynamics is trivial once $I_i$ and $J_i$ are known. Indeed, assume that for each $i$, there exists $\left(\tilde{I}_i,\tilde{J}_i\right)$ such that $I_i(t) - \tilde{I}_i(t) = O\left(\epsilon\right)$ and $J_i(t) - \tilde{J}_i(t) = O\left(\epsilon\right)$, then we can rewrite \eqref{2.17} into
\begin{equation} \label{2.18}
\dfrac{dI_{ij}}{dt} = - m I_{ij} + \beta k \tilde{I}_i \tilde{J}_j + \beta k \left[\left(I_i - \tilde{I}_i\right)\tilde{J}_j + \left(J_j - \tilde{J}_j\right)\tilde{I}_i + \left(I_i - \tilde{I}_i\right)\left(J_j - \tilde{J}_j\right)\right].
\end{equation}
Consider the equation
\begin{equation} \label{2.19}
\dfrac{d\tilde{I}_{ij}}{dt} = - m \tilde{I}_{ij} + \beta k \tilde{I}_i \tilde{J}_j
\end{equation}
then we can obtain the differential equation for $I_{ij} - \tilde{I}_{ij}$
\begin{equation} \label{2.20}
\dfrac{d}{dt}\left(I_{ij} - \tilde{I}_{ij}\right) = - m \left(I_{ij} - \tilde{I}_{ij}\right) + \beta k \left[\left(I_i - \tilde{I}_i\right)\tilde{J}_j + \left(J_j - \tilde{J}_j\right)\bar{I}_i + \left(I_i - \tilde{I}_i\right)\left(J_j - \tilde{J}_j\right)\right].
\end{equation}
By our assumption on $\tilde{I}_i,\tilde{J}_j$ and use the same arguments for $I(t) \to I^*$, we deduce that $I_{ij}(t) - \tilde{I}_{ij}(t) = O\left(\epsilon\right)$ on each bounded interval of time.
\subsection{Tikhonov's Theorem and derivation of the non-neutral dynamics}\label{sec2.4}
Using the above idea, we transform the problem into an equivalent slow-fast form which is analyzed through singular perturbations method. According to previous arguments, our slow-fast form includes variables $\left(X,Y,\mathbf{L},\mathbf{v},\mathbf{z}\right)$. Using \eqref{2.15}, we define  $\begin{pmatrix}
I_i \\ J_i
\end{pmatrix} = P \begin{pmatrix}
v_i \\ z_i
\end{pmatrix}.$
Proceeding like in \eqref{2.16}, we obtain for $\epsilon>0$:
\begin{equation} \label{2.27}
\left\{
\begin{aligned}
&\dfrac{dv_i}{dt} &=& -\xi v_i + O(\epsilon)\\
&\dfrac{dz_i}{dt} &=& O(\epsilon).
\end{aligned} 
\right.
\end{equation}
By setting $\tau = \epsilon t$, \eqref{2.28} can be read as the slow time scale:
\begin{equation} \label{2.28}
\left\{
\begin{aligned}
\epsilon&\dfrac{dv_i}{d\tau} &=& -\xi v_i + O\left(\epsilon\right)\\
&\dfrac{dz_i}{d\tau} &=& O(1).
\end{aligned}
\right.
\end{equation}
We need to compute explicitly the perturbation $O(1)$ in \eqref{2.28}. This computation is quite complex especially when involving perturbation in each parameters, so it's worthwhile  dividing this progress into five sub-cases wherein only one perturbation at a time occurs.\\ \\
After that, we will treat the slow-fast form by the Tikhonov's theorem, that is presented as follows.
\begin{thm}[Tikhonov, 1952, see \cite{Tik}] \label{thm2}
	Consider the initial value problem
	\begin{equation} \label{2.21}
	\left\{
	\begin{aligned}
	&\dfrac{dx}{d\tau} &=& f(x,y,\tau) + \epsilon \dots, \quad &x(0) = x_0, \quad &x \in D \subset \mathbb{R}^n,\\
	\epsilon&\dfrac{dy}{d\tau} &=& g(x,y,\tau) + \epsilon \dots, \quad &y(0) = y_0,\quad &y \in G \subset \mathbb{R}^n. 
	\end{aligned}
	\right.
	\end{equation}
	For $f$ and $g$, we take sufficiently smooth vector functions in $x$, $y$ and $t$; the dots represent (smooth) higher-order terms in $\epsilon$.
	\begin{enumerate}
		\item [a.] We assume that a unique solution of the initial value problem exists and suppose this holds also for the reduced problem
		\begin{equation} \label{2.22}
		\left\{
		\begin{aligned}
		&\dfrac{dx}{d\tau} &=& f(x,y,\tau), \quad &x(0) = x_0,\\
		&0 &=& g(x,y,\tau),
		\end{aligned}
		\right.
		\end{equation}
		with solution $\bar{x}(\tau)$, $\bar{y}(\tau)$.
		\item[b.] Suppose that $0 = g(x,y,\tau)$ is solved by $\bar{y} = \phi(x,\tau)$, where $\phi(x,\tau)$ is a continuous function and an isolated root, i.e. there exists a neighbor of $\phi(x,\tau)$ such that there is no other solution for $0 = g(x,y,\tau)$ in this vicinity. Also, suppose that $\bar{y} = \phi(x,t)$ is an asymptotically stable solution		\footnote{Recall that the solution $\bar{y} = \phi(x,\tau)$ is asymptotically stable if for each $\tau_0 > 0$, a $\delta(\tau_0)$ can be found such that: $\|y_0 - \phi(x,\tau_0)\| \leq \delta(\tau_0)$ yields $\lim\limits_{\tau \to \infty}\|y(\tau;\tau_0,x_0) - \phi(x,\tau)\| = 0$.} of the equation $\dfrac{dy}{dt} = g(x,y,\tau)$, where $\tau = \epsilon t$, that is uniform in the parameters $x \in D$ and $t \in \mathbb{R}^+$.\\ \\ 

		\item [c.] $y(0)$ is contained in an interior subset of the domain of attraction of $\bar{y} = \phi(x,\tau)$ in the case of the parameter values $x = x(0)$, $\tau = 0$.
	\end{enumerate}
	Then, we have
	\begin{equation} \label{2.23}
	\begin{aligned}
	&\lim\limits_{\epsilon \to 0} x_\epsilon (\tau) &=& \bar{x}(\tau), \quad &0 \leq \tau \leq T,\\
	&\lim\limits_{\epsilon \to 0} y_\epsilon (\tau) &=& \bar{y}(\tau), \quad &0 <\tau_0 \leq \tau \leq T,
	\end{aligned}
	\end{equation}
	with $\tau_0$ and $T$ are constants independent of $\epsilon$.
\end{thm}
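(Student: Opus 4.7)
The plan is to construct a uniformly valid asymptotic expansion by combining an \textbf{outer solution} living on the slow manifold $\bar{y}=\phi(x,\tau)$ with an \textbf{inner boundary-layer correction} valid near $\tau=0$, following the classical matched-asymptotics strategy of O'Malley and Vasil'eva. The boundary layer accounts for the initial fast transient during which $y_{\epsilon}$ collapses toward the slow manifold, while the outer region describes the long-time evolution along that manifold.

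First, I would rescale to the fast time $t=\tau/\epsilon$, under which \eqref{2.21} becomes $dx/dt=\epsilon f+O(\epsilon^{2})$ and $dy/dt=g+O(\epsilon)$. Formally setting $\epsilon=0$ freezes $x\equiv x_{0}$ and yields the boundary-layer equation $dY/dt=g(x_{0},Y,0)$ with $Y(0)=y_{0}$. Assumption (c) places $y_{0}$ in the basin of attraction of the equilibrium $\phi(x_{0},0)$, and the uniform asymptotic stability in (b) then forces $Y(t)\to\phi(x_{0},0)$ as $t\to\infty$, with a decay rate that can be chosen uniform in $x_{0}$ over any compact set. This is the mechanism by which the fast variable settles onto the slow manifold.

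Next, for the outer region $\tau\ge\tau_{0}>0$, I would introduce the differences $w(\tau)=y_{\epsilon}(\tau)-\phi(x_{\epsilon}(\tau),\tau)$ and $u(\tau)=x_{\epsilon}(\tau)-\bar{x}(\tau)$. Differentiating and using the chain rule, $w$ satisfies a schematic equation $\epsilon\dot{w}=D_{y}g(\phi,x_{\epsilon},\tau)\,w+O(\|w\|^{2})+O(\epsilon)$, while $u$ satisfies $\dot{u}=D_{x}f\cdot u+D_{y}f\cdot w+O(\epsilon)$. The uniform asymptotic stability of $\phi$ yields a spectral gap for $D_{y}g$ and hence a Lyapunov function $V(w)$ whose decay rate is $O(1/\epsilon)$ on the slow timescale; coupling this to a standard Gronwall estimate for $u$ produces the joint bound $\|u(\tau)\|+\|w(\tau)\|=O(\epsilon)$ on $[\tau_{0},T]$. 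Matching then glues the two pieces: the boundary layer on $[0,\tau_{0}]$ brings $y_{\epsilon}$ within distance $O(\epsilon)$ of $\phi(x_{\epsilon}(0),0)$, after which the outer estimate takes over.

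The main obstacle I anticipate is making the stability hypothesis in (b) quantitative enough to extract a Lyapunov function with decay rate uniform in $x$ over the compact set traversed by $\bar{x}$ on $[0,T]$, together with a bootstrap argument ensuring that $x_{\epsilon}(\tau)$ remains in that compact set for the full interval (so that the frozen-$x$ stability estimates from (b) transfer to the slowly-varying-$x$ problem). Once these ingredients are secured, passing to the limit $\epsilon\to0$ yields the two convergences in \eqref{2.23}, where $\tau_{0}$ is any positive time larger than the boundary-layer relaxation scale and $T$ is the time horizon on which the reduced problem \eqref{2.22} is well posed.
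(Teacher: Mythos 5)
First, be aware that the paper does not prove this statement at all: it is Tikhonov's classical theorem, imported verbatim with a citation to \cite{Tik} and used as a black box (with \cite{Ver} supplying the quantitative companion, Theorem \ref{thm5}). So there is no in-paper proof to compare against; your outline has to be judged against the standard literature proofs (Tikhonov, Vasil'eva, Verhulst).

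Your matched-asymptotics architecture --- a fast-time boundary layer that carries $y_\epsilon$ onto the slow manifold, followed by an outer estimate along it --- is the right general strategy, but it contains one genuine gap. Hypothesis (b) asserts only that $\bar y=\phi(x,\tau)$ is an \emph{asymptotically stable} equilibrium of the frozen fast equation, uniformly in $x$ and $\tau$. This does \emph{not} yield a spectral gap for $D_y g$ (consider $\dot y=-y^3$: asymptotically stable, linearization identically zero), so the step where you extract a Lyapunov function decaying at rate $O(1/\epsilon)$ on the slow time scale, the resulting $O(\epsilon)$ bound on $\|u\|+\|w\|$, and the claim that the boundary layer brings $y_\epsilon$ within $O(\epsilon)$ of the manifold, are all unavailable under the stated hypotheses; they require the stronger assumption $\mathrm{Re}\,\operatorname{spec} D_y g\le -c<0$, which is what the Vasil'eva--O'Malley expansion (and Theorem \ref{thm5}) actually uses. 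The conclusion of Tikhonov's theorem is correspondingly weaker --- mere convergence as $\epsilon\to0$, with no rate --- and the standard proof replaces your linearization step with a direct ``tube'' argument: uniform asymptotic stability gives, for every $\delta>0$, a fast time after which the frozen flow has entered the $\delta$-neighbourhood of $\phi$, and a continuity--compactness argument shows that the genuine, slowly drifting trajectory cannot escape a slightly larger tube on $[\tau_0,T]$; convergence of $x_\epsilon$ then follows from Gronwall applied to the difference $x_\epsilon-\bar x$. Your closing remark about a bootstrap keeping $x_\epsilon$ in a compact set is indeed a necessary ingredient, but the quantitative Lyapunov machinery must either be dropped in favour of this softer attraction argument or be supported by explicitly strengthening hypothesis (b).
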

Beside, it needs to use another result that allows us to approximate the original system by the slow-fast form. The following error estimate gives a more precise description of these limits.
 (theorem 9.1, \cite{Ver} adapted here for  the simple case $m=0$).
	\begin{thm} \label{thm5}
	[see \cite{Ver}] Consider the initial value problem
	\begin{equation} \label{2.24}
	\dfrac{dx}{dt} = f_0(t,x) +\epsilon R(t,x,\epsilon)
	\end{equation}
	with $x(t_0) = \eta$ and $|t - t_0| \leq h$, $x \in D \subset \mathbb{R}^n$, $0 \leq \epsilon \leq \epsilon_0$. Assume that in this domain we have
	\begin{enumerate}
		\item [a.] $f(t,x)$  continuous in $t$ and $x$, $2$ times continuously differentiable in $x$;
		\item [b.] $R(t,x,\epsilon)$ continuous in $t,x$ and $\epsilon$, Lipschitz-continuous in $x$.
	\end{enumerate}
	Let $x_0(t)$ be the solution of 
		\begin{equation} \label{2.25}
	\dfrac{dx}{dt} = f_0(t,x) 
	\end{equation}
	with $x_0(t_0) = \eta$
Let $T>0$ and assume that both $x$ and $x_0$ are defined on $[0,T]$ for any $\epsilon \in (0,\epsilon_0)$. There exist $C>0$ (depending on $T$) such that for any $\epsilon \in(0,\epsilon_0)$, and $t\in (0,T)$, we have the estimate
	\begin{equation} \label{2.26}
	\left\|x(t) -x_0(t)  \right\| \leq C \epsilon
	\end{equation}
\end{thm}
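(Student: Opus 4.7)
The plan is to bound the difference $e_\epsilon(t) := x(t) - x_0(t)$ directly by writing down the ODE it satisfies, extracting uniform Lipschitz and sup-norm constants on a fixed compact set, and then closing the estimate with Gronwall's inequality. First, I would subtract the two equations to obtain
\begin{equation*}
\frac{d e_\epsilon}{dt} = \bigl[f_0(t,x(t)) - f_0(t,x_0(t))\bigr] + \epsilon R(t,x(t),\epsilon), \qquad e_\epsilon(t_0) = 0.
\end{equation*}

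Next, since both $x(\cdot)$ and $x_0(\cdot)$ are by assumption defined on $[0,T]$ with values in $D$, and since they depend continuously on $t$ (and, by standard continuous dependence, on $\epsilon$ on $[0,\epsilon_0]$), their trajectories lie in a common compact subset $K \subset D$ that can be chosen independently of $\epsilon \in [0,\epsilon_0]$. On $K$, the assumption that $f_0$ is twice continuously differentiable in $x$ (and continuous in $t$) provides a uniform Lipschitz constant $L < \infty$ for $f_0(t,\cdot)$, and the continuity of $R$ on the compact set $[0,T] \times K \times [0,\epsilon_0]$ gives a uniform bound $M := \sup \|R\| < \infty$. Integrating from $t_0$ to $t$ and using the triangle inequality together with these bounds yields
\begin{equation*}
\|e_\epsilon(t)\| \leq L \int_{t_0}^{t} \|e_\epsilon(s)\| \, ds + \epsilon M (t-t_0) \leq L \int_{t_0}^{t} \|e_\epsilon(s)\| \, ds + \epsilon M T.
\end{equation*}
Applying Gronwall's inequality then gives $\|e_\epsilon(t)\| \leq \epsilon M T\, e^{LT}$ for every $t \in [0,T]$, and the claim follows by setting $C := M T\, e^{LT}$, which depends on $T$ through the explicit factor and through $L$, $M$ chosen on $K$, but does not depend on $\epsilon$.

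The main obstacle is not computational but rather the uniformity step: one must be sure that a single compact $K \subset D$ works for every $\epsilon \in (0,\epsilon_0)$, so that $L$ and $M$ are genuinely $\epsilon$-independent. The hypothesis that $x$ and $x_0$ both stay in $D$ on $[0,T]$ for all $\epsilon \in (0,\epsilon_0)$, together with continuous dependence of ODE solutions on parameters, is precisely what legitimises this uniform choice; once that is in hand, the Gronwall argument is routine.
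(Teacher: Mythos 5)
The paper does not prove this statement at all: Theorem \ref{thm5} is imported verbatim (with minor adaptations) from Verhulst \cite{Ver}, Theorem 9.1, so there is no internal proof to compare against. Your Gronwall argument is the standard proof of this regular-perturbation estimate and is essentially the one found in \cite{Ver}; it is correct. The only soft spot is the uniformity step: invoking ``continuous dependence on $\epsilon$'' to produce the common compact set $K$ is mildly circular, since continuous dependence on the perturbation is close to what is being proved. It is cleaner to note that the hypotheses (a) and (b) are assumed to hold on all of $D$, and that in Verhulst's formulation $D$ is a bounded domain, so the Lipschitz constant $L$ for $f_0(t,\cdot)$ and the bound $M=\sup\|R\|$ are uniform over $[0,T]\times \overline{D}\times[0,\epsilon_0]$ directly, with no need to locate the trajectories inside a smaller $\epsilon$-independent compact set. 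With that adjustment the estimate $\|x(t)-x_0(t)\|\leq \epsilon M T e^{LT}$ closes exactly as you wrote it.
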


\section{Integrating many perturbations in the slow-fast approximation}\label{section3}

\subsection{Steps for application of Tikhonov's theorem in our system}
Next we develop a lemma showing allowing to linearly combine all the relevant simple cases directly into the slow equation. For this purpose, we use the following notations in system \eqref{2.8}.
\begin{equation}
\begin{aligned}
&\beta_i = \beta\left(1 + \chi_1\epsilon b_i\right);\qquad \gamma_i = \gamma\left(1 + \chi_2\epsilon\nu_i\right); \qquad \gamma_{ij} = \gamma\left(1 + \chi_3\epsilon  u_{ij}\right);\\ 
&p_{ij}^s = \dfrac{1}{2} + \chi_4\epsilon \omega_{ij}^s \quad s\in\{i,j\} \quad \left(\omega_{ij}^i + \omega_{ij}^j = 0\right); \qquad k_{ij} = k + \chi_5\epsilon\alpha_{ij};
\end{aligned}
\end{equation}
where $\chi_d \in \{0,1\}$ for $d=1,2,3,4,5$.\\ \\
Any combination of trait variation among strains, can be captured via $\mathcal{A}$ where $\mathcal{A}$ is a subset of $\{1,2,3,4,5\}$ denoting the absence/presence of perturbations in that parameter among strains: for some fixed initial values given, let $C_\mathcal{A}$ be the system \eqref{2.8} with $\chi_d = 1$ if $d \in \mathcal{A}$ and $\chi_d = 0$ if $d \notin \mathcal{A}$. For simplicity, we note also $C_{\{d\}}$ by $C_d$ for $d \in \{1,2,3,4,5\}$.
\begin{rmk}
If $\mathcal{A}=\emptyset$ then there is no perturbation and the system $C_{\emptyset}$ is exactly the {\it neutral} model \eqref{2.9}.
If $\mathcal{A}=\left\{5\right\}$ then $C_5$ is the system with perturbation on the co-colonization interaction parameters $k_{ij}$ only, that has been  studied in \cite{Gjini,Gjini1,Madec2}.
\end{rmk}
In order to capture all the perturbations of order 1 in  the equation of the $z_i$ we need these additional changes of variables:
\begin{equation}\label{2.30}
S(t) = S^* - \epsilon X (t)+ O(\epsilon^2); \quad T(t)=T^* + \epsilon X(t) + O(\epsilon^2); \quad I(t) = I^* +\epsilon Y(t) + O(\epsilon^2).
\end{equation}
where $S^*$, $T^*$ and $I^*$ are defined in \eqref{2.6}, and for $i=1 ,\cdots, N$:
\begin{equation}\label{2.31}L_i (t) = \dfrac{1}{2}\sum\limits_{j=1}^{N}\left( u_{ij}I_{ij}(t) +  u_{ji}I_{ji}(t)\right)
.
\end{equation}
With these notations, $C_{\mathcal{A}}$ reads
\begin{equation} \label{2.32}
\left\{
\begin{aligned}
&\dfrac{dX}{dt} &=& -\beta T^*X +  \chi_1\beta S^*\sum_{i=1}^N b_i J_i - \chi_2\gamma\sum_{i=1}^{N}\nu_i I_i -  \chi_3\gamma\sum_{i=1}^{N}L_i     + O(\epsilon)\\
&\dfrac{dY}{dt} &=& \beta(S^* - T^* -k I^*)X - (m+\beta k T^*)Y + \chi_1\beta(S^*-kI^*)\sum_{i=1}^N b_i J_i  - \chi_2\gamma\sum_{i=1}^{N}\nu_i I_i  - \chi_5\beta \sum_{i,j=1}^{N}\alpha_{ij}I_i J_j + O(\epsilon)\\
&\dfrac{dL_i}{dt} &=& -m L_i + \chi_3\dfrac{1}{2}\beta\gamma k I_i \sum_{j = 1}^N u_{ij}J_j + \chi_3\dfrac{1}{2}\beta \gamma k J_i \sum_{j=1}^{N} u_{ji}I_j + O\left(\epsilon\right)
\end{aligned}
\right.
\end{equation}
together with (we omit terms of $O\left(\epsilon^2\right)$)
\begin{equation} \label{2.33}
\begin{aligned}
\dfrac{d}{dt}\begin{pmatrix}
I_i \\ J_i 
\end{pmatrix} &=&
 A \begin{pmatrix}
I_i \\J_i
\end{pmatrix}  &- \epsilon \beta \begin{pmatrix}
k & 1 \\ 
\frac{k}{2} & 1
\end{pmatrix} \begin{pmatrix}
I_i \\ J_i
\end{pmatrix} X + \epsilon \dfrac{\beta k}{2} \begin{pmatrix}
0 & 0  \\ 
0 & 1
\end{pmatrix} \begin{pmatrix}
I_i \\ J_i
\end{pmatrix} Y + \epsilon \mathcal{M}_{\mathcal{A}} \begin{pmatrix}
I_i \\ J_i
\end{pmatrix} - \epsilon \chi_3\begin{pmatrix}
0 \\ L_i
\end{pmatrix} 
\end{aligned}
\end{equation}
where $A$ is defined in \eqref{2.14} and  $\mathcal{M}_{\mathcal{A}}$ is the matrix
\begin{equation} \label{2.34}
\begin{pmatrix}
-\chi_1\beta k \sum\limits_{i=1}^N b_i J_i - \chi_2\gamma\nu_i - \chi_5\beta\sum\limits_{j=1}^{N}\alpha_{ij}J_j & 
\chi_1\beta b_i S^*  \\ 
\beta\sum\limits_{j=1}^{N}\left(\chi_4k\omega_{ij}^i -\chi_5\frac{\alpha_{ij}}{2} \right)J_j -\chi_1\beta\frac{k}{2}\sum\limits_{i=1}^N b_i J_i  - \chi_2\gamma\nu_i  &
\chi_1\beta b_i\left(S^* +\frac{k I^*}{2}\right)  + \beta\sum\limits_{j=1}^{N}\left(\chi_4k\omega_{ji}^i + \chi_5\frac{\alpha_{ji}}{2} \right)I_j
\end{pmatrix}
\end{equation}
In order to apply the Theorem \eqref{thm2}, we rewrite system $C_{\mathcal{A}}$ using the changes of variables detailed in \eqref{2.15}.\\
Let us note $$\mathbf{L}=(L_i)_i,\quad\mathbf{v}=(v_i)_i,\quad\mathbf{z}=(z_i)_i,$$
and  $-\xi=-(m+\beta k T^*)+ \dfrac{\beta k I^*}{2}<0$. The system $C_{\mathcal{A}}$ reads now as the slow-fast form
\begin{equation} \label{CA}
\left\{
\begin{aligned}
&\dfrac{dX}{dt} &=& -\beta T^*X +\chi_1 F_X^1\left(\mathbf{v},\mathbf{z}\right)
+\chi_2 F_X^2\left(\mathbf{v},\mathbf{z}\right)
+\chi_3 F_X^3\left(\mathbf{L}\right)
+ O(\epsilon)\\
&\dfrac{dY}{dt} &=& \beta(S^* - T^* -k I^*)X - (m+\beta k T^*)Y 
+\chi_1 F_Y^1\left(\mathbf{v},\mathbf{z}\right)
+\chi_2 F_Y^2\left(\mathbf{v},\mathbf{z}\right)
+\chi_5 F_Y^5\left(\mathbf{v},\mathbf{z}\right)
+ O(\epsilon)\\
&\dfrac{dL_i}{dt} &=& -m L_i + \chi_3 F_{L_i} \left(\mathbf{v},\mathbf{z}\right)+ O\left(\epsilon\right)\\
&\dfrac{dv_i}{dt} &=& -\xi v_i + O(\epsilon)\\
&\dfrac{dz_i}{dt} &=& \epsilon\left(F_{z_i}(X,Y,\mathbf{L},\mathbf{v},\mathbf{z})+O(\epsilon)\right)
\end{aligned}
\right.
\end{equation}
wherein we have replaced $I_i$ and $J_i$ by $v_i$ and $z_i$ though the change of variable \eqref{2.15}, that is:
$$\begin{pmatrix}I_i\\J_i \end{pmatrix}=P\begin{pmatrix}v_i\\z_i \end{pmatrix}\text{ with } P=\begin{pmatrix}2T^*&I^*\\D^*&T^* \end{pmatrix}.$$
For $i=1,\cdots,N$, the functions $F_X^{i}$, $F_Y^{i}$ and $F_{L_i}$ are obviously deduced from the right term of \eqref{2.32} and are linear in theirs variables, $X,Y$ and $\mathbf{L}$ respectively.
The function $F_Y^{4}$ is quadratic in $(\mathbf{v},\mathbf{z})$.
Finally, $F_{z_i}$ is given by the second line of the right term of \eqref{2.34}  after the linear change of variables \eqref{2.15}:
\begin{equation}\label{eq:Fzi}
F_{z_i}\left(X,Y,\mathbf{L},\mathbf{v},\mathbf{z}\right)=\begin{pmatrix} 0 & 1\end{pmatrix}P^{-1}\left(\beta \begin{pmatrix}
-k & -1 \\ 
-\dfrac{k}{2} & -1
\end{pmatrix}  X +  \dfrac{\beta k}{2} \begin{pmatrix}
0 & 0  \\ 
0 & 1
\end{pmatrix}  Y + \mathcal{M}_{\mathcal{A}} \right)
 P \begin{pmatrix} v_i\\z_i\end{pmatrix} + \begin{pmatrix} 0 & 1\end{pmatrix}P^{-1}\chi_3\gamma\begin{pmatrix} 0 \\ L_i\end{pmatrix}.
\end{equation}
The next step is to change the time scale. Taking $\tau=\epsilon t$ in \eqref{CA} we obtain\footnote{We use the usual notation abuse. Rigorously speaking, we have to define $\widetilde{X}(\tau)=X\left(\frac{\tau}{\epsilon}\right)$ and the same for each variables. Here we remove the $\widetilde{}$ for simplicity.} 
the following system which is equivalent to \eqref{CA} but in the slow motion $\tau$.
\begin{equation} \label{CAslow}
\left\{
\begin{aligned}
&\epsilon\dfrac{dX}{d\tau} &=& -\beta T^*X +\chi_1 F_X^1\left(\mathbf{v},\mathbf{z}\right)
+\chi_2 F_X^2\left(\mathbf{v},\mathbf{z}\right)
+\chi_3 F_X^3\left(\mathbf{L}\right)
+ O(\epsilon)\\
&\epsilon\dfrac{dY}{d\tau} &=& \beta(S^* - T^* -k I^*)X - (m+\beta k T^*)Y 
+\chi_1 F_Y^1\left(\mathbf{v},\mathbf{z}\right)
+\chi_2 F_Y^2\left(\mathbf{v},\mathbf{z}\right)
+\chi_5 F_Y^5\left(\mathbf{v},\mathbf{z}\right)
+ O(\epsilon)\\
&\epsilon\dfrac{dL_i}{d\tau} &=& -m L_i + \chi_3 F_{L_i} \left(\mathbf{v},\mathbf{z}\right)+ O\left(\epsilon\right)\\
&\epsilon\dfrac{dv_i}{d\tau} &=& -\xi v_i + O(\epsilon)\\
&\dfrac{dz_i}{d\tau} &=& F_{z_i}(X,Y,\mathbf{L},\mathbf{v},\mathbf{z})+O(\epsilon)
\end{aligned}
\right.
\end{equation}
Using the notation of the Theorem \ref{thm2}, we see that the fast variables is $y(\tau)=(X,Y,\mathbf{L},\mathbf{v})$ and the slow variable is $x(\tau)=\mathbf{z}(\tau)$.
The first step in applying the Tikhonov theorem is to take $\epsilon=0$ in \eqref{CAslow} and to show that the fast variable converge to an attractor $\mathbf{\phi}(\mathbf{z})$ which is parametrized by the slow variable.
\begin{lmm}\label{lemma_fast}
	Let $\epsilon=0$ in \eqref{CAslow}. Then there exist a function $\mathbf{\Phi}(\mathbf{z})=\left(X^*(\mathbf{z}),Y^*(\mathbf{z}),\chi_3\mathbf{L}^*(\mathbf{z}),0\right)$ 
	such that the solution $(X,Y,\mathbf{L},\mathbf{v},\mathbf{z})$ of $\eqref{CA}$ with any initial condition 
	$$(X,Y,\mathbf{L},\mathbf{v},\mathbf{z})(0)=(X_0,Y_0,\mathbf{L}_0,\mathbf{v}_0,\mathbf{z}_0)\in\R\times \R\times \R^n\times \R^n\times \R^n$$
	verifies $\mathbf{z}(t)=\mathbf{z}_0$ for all $t\geq 0$ and 
	$$\lim_{t\to+\infty}(X,Y,\mathbf{L},\mathbf{v})(t)=\mathbf{\Phi}(\mathbf{z}_0).$$
Moreover, $X^*$ and $Y^*$ are linear function of the $\chi_i$.
\end{lmm}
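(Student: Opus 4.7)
The plan is to exploit the triangular (cascade) structure of \eqref{CA} with $\epsilon=0$, resolving the variables in the order $\mathbf{z}\to \mathbf{v}\to \mathbf{L}\to X\to Y$. First, setting $\epsilon=0$ gives $\dfrac{dz_i}{dt}=0$ so $\mathbf{z}(t)\equiv \mathbf{z}_0$ for all $t\geq 0$. The $\mathbf{v}$-equation decouples as $\dfrac{dv_i}{dt}=-\xi v_i$ with $\xi>0$ (established after \eqref{2.15}), hence $v_i(t)=v_i(0)e^{-\xi t}\to 0$ exponentially. This produces the ``0'' component in $\mathbf{\Phi}(\mathbf{z})$ and confirms that $\mathbf{\Phi}$ can only depend on $\mathbf{z}_0$ through the asymptotic forcing it induces in the remaining equations.

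The three remaining variables $L_i$, $X$, $Y$ each satisfy a scalar linear ODE of the form $\dot u = -\lambda u + h(t)$ with $\lambda>0$ and $h(t)$ converging exponentially to some limit $h_\infty$; the elementary variation-of-constants lemma then gives $u(t)\to h_\infty/\lambda$. I apply this in cascade. For $L_i$, the decay rate is $m>0$ and the forcing $\chi_3 F_{L_i}(\mathbf{v}(t),\mathbf{z}_0)$ is bilinear in $(\mathbf{v},\mathbf{z})$ (it is inherited from the products $I_iJ_j$ in \eqref{2.32} via the linear change of variables \eqref{2.15}), so it converges to $\chi_3 F_{L_i}(0,\mathbf{z}_0)$. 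Hence $L_i(t)\to \chi_3 L_i^*(\mathbf{z}_0):=\chi_3 F_{L_i}(0,\mathbf{z}_0)/m$. Next, $X$ has decay rate $\beta T^*>0$ and is forced by $\mathbf{v}(t)\to 0$, $\mathbf{z}_0$, and $\mathbf{L}(t)\to \chi_3\mathbf{L}^*(\mathbf{z}_0)$; the limit $X^*(\mathbf{z}_0)$ is obtained by setting the right-hand side of the $X$-equation to zero with the limiting forcing. Finally, $Y$ has decay rate $m+\beta k T^*>0$ and is forced by $X\to X^*$, $\mathbf{v}\to 0$, $\mathbf{z}_0$; the same lemma yields $Y(t)\to Y^*(\mathbf{z}_0)$.

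To see linearity of $X^*$ and $Y^*$ in the $\chi_i$, I inspect the right-hand sides directly: each $F_X^d$ is linear in $(\mathbf{v},\mathbf{z})$ or in $\mathbf{L}$, so $X^*$ is a linear combination of $\chi_1 F_X^1(0,\mathbf{z}_0)$, $\chi_2 F_X^2(0,\mathbf{z}_0)$ and $\chi_3 F_X^3(\mathbf{L}^*(\mathbf{z}_0))$ divided by $\beta T^*$; using $\chi_3^2=\chi_3$ (since $\chi_3\in\{0,1\}$) the third contribution remains of first order in the $\chi_d$. The same inspection applied to the $Y$-equation, whose linear driver $X^*$ is itself linear in $\chi_1,\chi_2,\chi_3$ and whose $F_Y^d$ terms contribute $\chi_1,\chi_2,\chi_5$ factors, gives linearity of $Y^*$ in $\chi_1,\chi_2,\chi_3,\chi_5$.

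There is no serious obstacle: every decay rate in the fast block is strictly negative, the convergence of $\mathbf{v}$ is exponential, and the dependencies propagate one-way through a triangular cascade. The only step requiring a little care is verifying that the forcings converge (rather than merely being bounded), which is immediate from the explicit polynomial form of $F_{L_i}$, $F_X^d$, $F_Y^d$ combined with $\mathbf{v}(t)\to 0$ exponentially and $\mathbf{z}(t)\equiv \mathbf{z}_0$. The remainder is bookkeeping: writing down $\mathbf{L}^*(\mathbf{z}_0)$, $X^*(\mathbf{z}_0)$, $Y^*(\mathbf{z}_0)$ in closed form and reading off the linear dependence on the $\chi_i$.
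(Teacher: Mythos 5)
Your proposal is correct and follows essentially the same route as the paper: exploit the triangular structure, resolve $\mathbf{z}\to\mathbf{v}\to\mathbf{L}\to X\to Y$ in cascade, and read off linearity in the $\chi_d$ from the linearity of the $F^d$ terms together with $\chi_d^2=\chi_d$. The only difference is that you make explicit, via the variation-of-constants lemma for $\dot u=-\lambda u+h(t)$ with $\lambda>0$ and convergent forcing, the justification that the paper states is ``easy to verify'' and omits.
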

\begin{proof}
	Using the triangular structure of \eqref{CAslow} the idea is to compute the limits step by step of $\mathbf{v}$, $\mathbf{L}$, $X$ and $Y$ in this order. Here we make a quick formal computation by simply plugging the limits obtained at one step into the equation of the next step. It is easy to verified that this computation is justified and we omit it here for clarity.\\ 
	Since \eqref{CAslow} is equivalent to \eqref{CA} but in the slow motion, we take $\epsilon=0$ in \eqref{CA}.
	We have directly $\mathbf{z}(t)=\mathbf{z}_0$ for all $t\geq0$ and $v_i=e^{-\xi t} v_i(0) \to 0$ asymptotically as $t\to+\infty$.
	Remark that taking $v_i=0$ in the others equations leads to the simple change of variables : $I_i=I^* z_i$ and $J_i=T^*z_i$ that we can plug in \eqref{2.32}-\eqref{2.33}-\eqref{2.34} to simplify the explicit computations.\\
	Now we have the following asymptotic limits
	$$L_i(t)\to \chi_3 \frac{1}{m}F_{L_i}(0,\mathbf{z}_0)=\chi_3L_i^*(\mathbf{z}_0).$$
	Denoting $\mathbf{L}^*=\left(L_i^*\right)_i$ and plugging this into the equation of $X$ we have:
	$$X(t)\to -\frac{1}{\beta T^*} \left(
	\chi_1 F_X^1\left(0,\mathbf{z}_0\right)
	+\chi_2 F_X^2\left(0,\mathbf{z}_0\right)
	+\chi_3 F_X^3\left(\chi_3\mathbf{L}^*(\mathbf{z}_0)\right)
	\right)=X^*(\mathbf{z}_0).$$
	Remark that by linearity of the $F_X^i$ and the fact that $\chi_d^2=\chi_d$ for each $d$, we have the simpler formula
	\begin{equation}\label{Xstar}
	X^*(\mathbf{z}_0)=-\frac{1}{\beta T^*} \left(
	\chi_1 F_X^1\left(0,\mathbf{z}_0\right)
	+\chi_2 F_X^2\left(0,\mathbf{z}_0\right)
	+\chi_3 F_X^3\left(\mathbf{L}^*(\mathbf{z}_0)\right)
	\right).
	\end{equation}
	Finally, using the same arguments we get 
	$$Y(t)\to Y^*(\mathbf{z}_0)$$
	wherein we have note
	$$Y^*(\mathbf{z}_0)=\frac{1}{m+\beta k T^*}  \left(\beta(S^* - T^* -k I^*)X^*(\mathbf{z}_0)
	+\chi_1 F_Y^1\left(0,\mathbf{z}_0\right)
	+\chi_2 F_Y^2\left(0,\mathbf{z}_0\right)
	+\chi_5 F_Y^5\left(0,\mathbf{z}_0\right)
	\right).$$
\end{proof}
Now, we take $\epsilon=0$ in \eqref{CAslow} and we fixe
\begin{equation}\label{CAslow_fastpart}
(X,Y,\mathbf{L},\mathbf{v})(\tau)=\mathbf{\Phi}(\mathbf{z}(\tau)).
\end{equation} 
Then the $2+2N$ first equations  are satisfied and the $N$ last equations give the {\it slow system}
\begin{equation}\label{CAslow_slowpart}
\dfrac{dz_i}{d\tau} = F_{z_i}(X^*(\mathbf{z}),Y^*(\mathbf{z}),\mathbf{L}^*(\mathbf{z}),0,\mathbf{z}).
\end{equation}
It's important to note that, since $\mathbf{v} = 0$ \eqref{CAslow_slowpart} then \eqref{2.15} gives $\sum_{i = 1}^N z_i = 1$ by the formula $I_i = I^*z_i$. 
Hence $z_i$ reflects the frequency of strain $i$  for all $i$. Remark that we have also $J_i=T^*z_i$.\\
The Theorem \ref{thm2} imply that the solutions of \eqref{CAslow_slowpart} together with \eqref{CAslow_fastpart} gives a good approximation of the original system \eqref{CAslow} for a small enough but positive $\epsilon$.
Coming back to the original variables of the SIS system, we deduce the following result on error estimate, whose proof will be given in section \ref{sec3.5}.
\begin{lmm}\label{lmm_error}
Let $T>0$ be fixed. There exists $\epsilon_0>0$ and $C_T>0$ such that for any $\epsilon\in(0,\epsilon_0)$ we have
for any solution of $\left(S,(I_i)_i,(I_{ij})_{ij}\right)_{i,j}$ of \eqref{2.1}  and $(z_i)_i$ of \eqref{CAslow_slowpart}
\begin{equation} \label{error_lemma}
	\left|S\left(\dfrac{\tau}{\epsilon} \right) - S^*\right| 
	+\sum_{i=1}^{N}\left|I_i\left(\dfrac{\tau}{\epsilon} \right)-I^*z_i(\tau)\right| 
	+ \sum_{i,j = 1}^{N}\left|I_{ij}\left(\dfrac{\tau}{\epsilon}\right) - k\dfrac{I^*T^*}{S^*}z_i\left(\tau\right)z_j\left(\tau\right) \right| \leq \epsilon C_T ,
	\end{equation}
\end{lmm}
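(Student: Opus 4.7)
The plan is to combine Tikhonov's Theorem \ref{thm2}, which identifies the slow manifold, with the error estimate Theorem \ref{thm5}, which quantifies the $O(\epsilon)$ closeness on compact time intervals, and then translate back from the slow--fast coordinates $(X,Y,\mathbf{L},\mathbf{v},\mathbf{z})$ to the original SIS variables $(S,I_i,I_{ij})$. The fact that $\mathbf{z}$ is the only truly slow variable while everything else is slaved to it through Lemma \ref{lemma_fast} is what makes the reduction work.

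First I would apply Tikhonov's theorem to \eqref{CAslow}. Lemma \ref{lemma_fast} already exhibits, at $\epsilon=0$, the attracting slow manifold $(X,Y,\mathbf{L},\mathbf{v})=\mathbf{\Phi}(\mathbf{z})$ with $\mathbf{v}$-component equal to $0$ (rate $-\xi<0$), $\mathbf{L}$-component stable with rate $-m$, and $X,Y$ stable with rates $-\beta T^*$ and $-(m+\beta k T^*)$, all negative and uniform in $\mathbf{z}$. This gives asymptotic stability uniformly in $\mathbf{z}\in D$, so hypotheses (a)--(c) of Theorem \ref{thm2} hold on any compact $\mathbf{z}$-set, and the slow component $\mathbf{z}(\tau)$ converges uniformly on $[0,T]$ to the solution $\bar{\mathbf{z}}(\tau)$ of the reduced system \eqref{CAslow_slowpart}, while the fast components converge uniformly to $\mathbf{\Phi}(\bar{\mathbf{z}}(\tau))$ on $[\tau_0,T]$ for any $\tau_0>0$. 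To turn this into the quantitative $O(\epsilon)$ bound, I would apply Theorem \ref{thm5} to the slow equation rewritten, after substitution of the slow manifold, as $\dot{\mathbf{z}}=F_{\mathbf{z}}(\mathbf{\Phi}(\mathbf{z}),\mathbf{z})+\epsilon R(\mathbf{z},\epsilon)$; the smoothness of $F_{\mathbf{z}}$ and $\mathbf{\Phi}$ (polynomial in their arguments) then yields $\|\mathbf{z}(\tau)-\bar{\mathbf{z}}(\tau)\|\leq C_T\epsilon$ uniformly in $\tau\in[0,T]$.

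Next I would translate back to the SIS variables. For $S$, the ansatz \eqref{2.30} gives $S(\tau/\epsilon)-S^*=-\epsilon X(\tau/\epsilon)+O(\epsilon^2)$, and since $X$ is bounded along the slow manifold, $|S(\tau/\epsilon)-S^*|\leq C_T\epsilon$. For $I_i$, the change of variables \eqref{2.15} gives $I_i=2T^* v_i+I^* z_i$; on the slow manifold $v_i=0$, so combining $|v_i(\tau/\epsilon)|=O(\epsilon)$ (Tikhonov on the fast variable) with $|z_i(\tau)-\bar z_i(\tau)|=O(\epsilon)$ yields $|I_i(\tau/\epsilon)-I^*\bar z_i(\tau)|\leq C_T\epsilon$. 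For $I_{ij}$, I would not use the reduced manifold directly but rather the original equation \eqref{2.17}: plugging $I_i=I^*z_i+O(\epsilon)$ and $J_j=T^*z_j+O(\epsilon)$ yields
\begin{equation*}
\frac{dI_{ij}}{dt}=-mI_{ij}+\beta k I^*T^*z_iz_j+O(\epsilon),
\end{equation*}
whose pseudo-equilibrium is $(\beta k/m)I^*T^*z_iz_j=k(I^*T^*/S^*)z_iz_j$ using $\beta/m=1/S^*$. Since $z_i,z_j$ vary on the slow scale, a Gronwall argument of the same type as the one used to prove $I(t)\to I^*$ in Section \ref{sec2.3} (see \eqref{2.13}) gives the desired $O(\epsilon)$ bound on each $I_{ij}$. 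Summing the estimates over $i,j$ and multiplying the constant by $N^2$ finishes the proof.

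The main obstacle is the initial fast layer: the fast variables $(X,Y,\mathbf{L},\mathbf{v})$ and $I_{ij}$ relax to their slow-manifold values over $O(1)$ units of fast time $t$, i.e.\ $O(\epsilon)$ in slow time $\tau$. During this layer one cannot expect the $O(\epsilon)$ bound unless either the initial data lie on (or $O(\epsilon)$-close to) the slow manifold, or one restricts the bound to $\tau\geq\tau_1\epsilon$. The cleanest way to handle this, which I would adopt, is to absorb the initial transient into an $O(\epsilon)$ modification of the initial condition for the reduced equation and to bound the fast-variable contributions by the exponential decay rates $-\xi,-m,-\beta T^*,-(m+\beta kT^*)$, so that the layer contributes a uniformly-in-$\tau$ term of order $\epsilon$ (after integration of $e^{-\lambda t}$ for $\lambda>0$). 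The estimate for $I_{ij}$ is the most delicate because it couples an error from $I_i$, an error from $J_j$, and the self-relaxation of $I_{ij}$, all of which must be controlled simultaneously on $[0,T/\epsilon]$ in the original time.
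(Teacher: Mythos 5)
Your proposal is correct and follows essentially the same route as the paper: Tikhonov's theorem plus the quantitative estimate of Theorem \ref{thm5} for the slow variable $\mathbf{z}$, translation back to $(S,I_i,J_i)$ via the ansatz \eqref{2.30} and the linear change of variables \eqref{2.15} with $\mathbf{v}=O(\epsilon)$, and a separate relaxation/Gronwall argument for $I_{ij}$ around the pseudo-equilibrium $k\frac{I^*T^*}{S^*}z_iz_j$ — exactly the decomposition into the paper's Lemmas \ref{lmm11}, \ref{thm6} and \ref{lmm12}. Your explicit treatment of the initial fast layer (absorbing the transient into an $O(\epsilon)$ shift of the reduced initial datum, or restricting to $\tau\geq\tau_0$) is if anything slightly more careful than the paper's.
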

\begin{proof}
See section \ref{sec3.5}.
\end{proof}
It remains to compute explicitly the slow system \eqref{CAslow_slowpart}. The following lemma shows that it suffices to compute independently the system for each perturbation, that is $\mathcal{A}=\{d\}$ for $d\in\{1,2,3,4,5\}$. The case of a general $\mathcal{A}$ is simply  a sum over  simple cases thanks to the following result. 
\begin{lmm}\label{glue_lemma}
Let $\mathcal{A}\subset \{1,\cdots,5\}$. Recall that $\chi_d=1$ if  $d\in\mathcal{A}$ and $\chi_d=0$ if $d\notin\mathcal{A}$.  The functions $F_{z_i}$ for $i=1,\cdots,N$ 
in \eqref{CAslow_slowpart} read
$$F_{z_i}(X^*(\mathbf{z}),Y^*(\mathbf{z}),\mathbf{L}^*(\mathbf{z}),0,\mathbf{z})=\sum_{d=1}^5 \chi_d z_i f_{z_i}^d\left(\mathbf{z}\right),$$
where the functions $f_{z_i}^d$ do not depend on $\chi_d$.\\
In particular, if $\mathcal{A}=\{d\}$ for some $d\in\{1,2,3,4,5\}$, then 
$$F_{z_i}(X^*(\mathbf{z}),Y^*(\mathbf{z}),\mathbf{L}^*(\mathbf{z}),0,\mathbf{z})= z_i f_{z_i}^d\left(\mathbf{z}\right).$$
\end{lmm}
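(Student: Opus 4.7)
The plan is to track the linear-in-$\chi_d$ structure through every substitution implicit in passing from the fast subsystem to the reduced slow equation. The key structural remark is that $F_{z_i}$ in \eqref{eq:Fzi} is an \emph{affine} function of $X$, $Y$, $L_i$, and of the entries of $\mathcal{M}_\mathcal{A}$: none of these quantities is multiplied against another one. After setting $v_i=0$ via \eqref{2.15}, one has $I_i = I^* z_i$ and $J_i = T^* z_i$, so the column $P\begin{pmatrix}0\\z_i\end{pmatrix} = z_i\begin{pmatrix}I^*\\T^*\end{pmatrix}$ already displays an overall $z_i$ factor in every contribution except the one coming from $L_i$.

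Next I would verify that each of $X^*(\mathbf{z})$, $Y^*(\mathbf{z})$, $\mathbf{L}^*(\mathbf{z})$, and each scalar entry of $\mathcal{M}_\mathcal{A}$, is already a linear combination of the $\chi_d$'s with $\chi$-independent coefficients. For $X^*$ this is literally the content of \eqref{Xstar}; the formula for $Y^*$ in the proof of Lemma \ref{lemma_fast} inherits the property after substituting $X^*$, yielding a sum over $d\in\{1,2,3,5\}$; $\mathbf{L}^*$ carries a single overall $\chi_3$; and each entry of $\mathcal{M}_\mathcal{A}$ in \eqref{2.34} is visibly a sum of monomials bearing a single $\chi_d$. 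The only a priori risk of cross terms $\chi_d\chi_{d'}$ lies in the nested expression $\chi_3 F_X^3(\chi_3\mathbf{L}^*)$ hidden inside $X^*$ and in the contribution $\chi_3\gamma L_i = \chi_3^2\gamma L_i^*$ of \eqref{eq:Fzi}; both collapse via the idempotency $\chi_d^2=\chi_d$ valid since $\chi_d\in\{0,1\}$. Consequently, $F_{z_i}$ — being an affine combination of quantities each of which is linear in the $\chi_d$'s — is itself linear in them, giving $F_{z_i}=\sum_d \chi_d\, G_i^d(\mathbf{z})$ with each $G_i^d$ independent of any $\chi_{d'}$.

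It then remains to pull out the common $z_i$ factor. For the contributions from $X^*$, $Y^*$, and $\mathcal{M}_\mathcal{A}$ this has already been noted via $P\begin{pmatrix}0\\z_i\end{pmatrix}$; for the $L_i$ contribution one checks the easily verified proportionality $L_i^*(\mathbf{z})\propto z_i$, obtained by plugging $I_i=I^*z_i$, $J_j=T^* z_j$ into the $L_i$-equation of \eqref{2.32} at its equilibrium. This yields $G_i^d(\mathbf{z})=z_i f_{z_i}^d(\mathbf{z})$ with $f_{z_i}^d$ free of any $\chi_{d'}$, and the ``in particular'' statement for $\mathcal{A}=\{d\}$ follows by setting the other $\chi_{d'}=0$. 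The only real obstacle is the bookkeeping of the $\chi_d^2=\chi_d$ simplifications; more conceptually, the crucial fact is that the slow manifold enters $F_{z_i}$ affinely rather than multiplicatively — had a term like $X\cdot Y$ or $X\cdot L_i$ appeared in \eqref{eq:Fzi}, genuine cross-dimensional interactions would re-emerge at order $\epsilon$ and the clean additive decomposition would fail.
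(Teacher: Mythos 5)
Your proposal is correct and follows essentially the same route as the paper's proof: both isolate the affine dependence of $F_{z_i}$ on $X^*$, $Y^*$, $\mathcal{M}_{\mathcal{A}}$ and $L_i^*$, establish that each of these is linear in the $\chi_d$ (using $\chi_d^2=\chi_d$ to kill the nested $\chi_3$ terms), and extract the common factor $z_i$ via $I_i=I^*z_i$, $J_i=T^*z_i$ together with $L_i^*\propto z_i$. Your closing observation that a product such as $X\cdot L_i$ in \eqref{eq:Fzi} would destroy the additive decomposition is a nice explicit articulation of why the argument works, but it does not change the substance of the proof.
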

\begin{proof}
Taking $v_i=0$ in \eqref{eq:Fzi} we see that there is two constant $C_X$ and $C_Y$ such that 
\begin{equation*}
F_{z_i}(X^*(\mathbf{z}),Y^*(\mathbf{z}),\mathbf{L}^*(\mathbf{z}),0,\mathbf{z})= z_i \left(C_X X^*(\mathbf{z}),+C_Y Y^*(\mathbf{z})+\begin{pmatrix}
0 & 1
\end{pmatrix} P \mathcal{M}_{\mathcal{A}} P^{-1}\begin{pmatrix} 0\\1\end{pmatrix}\right) + \begin{pmatrix}
0 & 1
\end{pmatrix} \chi_3\gamma P^{-1}\begin{pmatrix}
0\\ L^*_i\left(\mathbf{z}\right)
\end{pmatrix} .
\end{equation*}
Firstly, as it is shown in the proof of the lemma \ref{lemma_fast}, the expression of $X^*$ and $Y^*$ are  both a linear combination of the $\chi_d$.\\ \\
Secondly, recalling that we have at this step $I_i=I^*z_i$, $J_i=T^* z_i$, $\mathbf{L} =\chi_3 \mathbf{L}^*$ and, in particular, $\chi_3^2=\chi_3$. Plugging this in \eqref{CA}, we see that the matrix $\mathcal{M}_{\mathcal{A}}$ is also a linear combination of the $\chi_d$ :
$$\mathcal{M}_{\mathcal{A}}=\sum_{d\in\mathcal{A}} \mathcal{M}_{\{d\}}=\sum_{d\in\{1,2,3,4,5\}} \chi_d\mathcal{M}_{d}.$$
denoting  $m_d \left(\mathbf{z}\right)=\begin{pmatrix}
0 & 1
\end{pmatrix} P^{-1} \mathcal{M}_{\{d\}} P\begin{pmatrix} 0\\1\end{pmatrix}$, this yields to:
\begin{equation}\label{Mchii} 
\begin{pmatrix}
0 & 1
\end{pmatrix} P^{-1} \mathcal{M}_{\mathcal{A}} P\begin{pmatrix} 0\\1\end{pmatrix}
=\sum_{d\in\{1,2,3,4,5\}} \chi_d m_d \left(\mathbf{z}\right).\end{equation}
Thirdly, plugging $I_i = I^*z_i$ and $J_i = T^*z_i$, for all $i$ in \eqref{2.32} we prove that
\begin{equation*}
L_i^*\left(\mathbf{z}\right) = \dfrac{1}{2m}\beta k I^*T^*z_i\sum_{j=1}^{N}\left( u_{ij} +  u_{ji}\right)z_j.
\end{equation*}
Actually, this value $L^*\left(\mathbf{z}\right)$ is exact as in \eqref{L*} computed in section \ref{sec3.3}.\\
The result follows directly from the three previous points.
\end{proof}
In the next section \ref{sec3}, these functions $f_{z_i}^d$ are explicitly computed for any $d$.
\subsection{Main Results}\label{sec2.5}
In the earlier study \cite{Madec2}  we computed the slow dynamics for $\mathcal{A}=\{5\}$, that is for perturbation in $k_{ij}=k+\epsilon \alpha_{ij}$ only, i.e. for strains varying only in their co-colonization susceptibility interactions. We found that the slow system obeys a {\em replicator equation} which has the from
\begin{equation}\label{replicator0}
	\begin{cases}
	\dot{z_i}=\Theta z_i\left( \left(\Lambda \mathbf{z}\right)_i-\mathbf{z}^T \Lambda \mathbf{z}\right),\; i=1,\cdots,N,\\
	{\displaystyle \sum_{i=1}^N z_i =1}
	\end{cases}
\end{equation}
where $\Theta$ is a positive constant depending on the parameters of the \textit{neutral system} and $\Lambda=\left(\lambda_i^j\right)_{i,j}$ is the $N \times N$ matrix of pairwise invasion fitness among strains where the term of line $i$ and column $j$ was
\begin{equation*}
\lambda_{i}^j= \dfrac{I^*}{D^*}\left(\alpha_{ji}-\alpha_{ij}\right)+\left(\alpha_{ji}-\alpha_{jj}\right).
\end{equation*}
In this present article,  we show that the system \eqref{replicator0} is true for any type of perturbation. 
The change is that the constant $\Theta$ and the pairwise fitness $\lambda_i^j$ depend on the multiple trait variations  which occur in the system.
From the Lemma \ref{glue_lemma}, we infer in particular that the $\lambda_i^j$ are just a linear combination of the different perturbations. This implies that the pairwise invasion fitness between any two strains is an explicit weighted sum over all fitness dimensions where the two strains vary. 
More precisely, the main result of this article is as follows.\\
Let $\mathcal{A}\subset\{1,2,3,4,5\}$. Using the notations in the previous section, we prove in the \ref{sec3} that \eqref{CAslow_slowpart} reads.
\begin{equation} \label{2.44}
\boxed{
	\begin{aligned}
	\dfrac{dz_i}{d\tau} = 
	\Theta_1z_i\left(b_i - \sum_{j=1}^{N}b_j z_j\right) 
	&+ \Theta_2z_i\left(-\nu_i + \sum_{j=1}^{N}\nu_jz_j\right) 
	+\Theta_3z_i\left[-\sum_{j=1}^{N}( u_{ij}+  u_{ji})z_j + \sum_{j,l=1}^{N}( u_{jl} +  u_{lj})z_l z_j \right]\\
	& 
	+ \Theta_4 z_i \left[\sum\limits_{j = 1}^N (\omega_{ij}^i-\omega_{ji}^j)z_j\right]
	+\Theta_5 z_i\left[\sum\limits_{j = 1}^N\left(\dfrac{T^*}{D^*}\alpha_{ji} - \dfrac{I^*}{D^*}\alpha_{ij}\right)z_j - \sum\limits_{j,l = 1}^N \alpha_{jl}z_j z_l\right]
	\end{aligned}}
\end{equation}
where  
\begin{equation}\label{Theta_i}
\Theta_1 = \chi_1\dfrac{2\beta S^*{T^*}^2}{|P|}, \quad
\Theta_2 = \chi_2\dfrac{\gamma I^*\left(I^* + T^*\right)}{|P|},\quad
\Theta_3 = \chi_3\dfrac{\gamma T^*D^*}{|P|},\quad
\Theta_4 = \chi_4\dfrac{2mT^*D^*}{|P|},
\quad
\Theta_5 = \chi_5\dfrac{\beta T^*I^*{D^*}}{|P|}.
\end{equation}
Naturally, if $\mathcal{A}=\emptyset$, \eqref{2.44} becomes simply $\dfrac{dz_i}{d\tau} = 0$. 
 Otherwise, if $\mathcal{A}\neq\emptyset$, it is useful to rewrite  \eqref{2.44} using the pairwise invasion fitness between strains in \eqref{replicator0}.
 Define 
\begin{equation}\label{theta_i}
\Theta = \Theta_1 + \Theta_2 + \Theta_3 + \Theta_4 +\Theta_5\quad \text{ and }\theta_i = \dfrac{\Theta_i}{\Theta}.
\end{equation} we see that  $\theta_i\geq 0$ for each $i=1,2,3,4,5$ and  $\theta_1 +\theta_2 +\theta_3 +\theta_4 +\theta_5= 1$. For completeness, if $\mathcal{A}=\emptyset$  then we set $\Theta=1$.  Using these notations, we obtain our main result.
\begin{thm}
Consider the system of equations
	\begin{equation} \label{2.47}
	\boxed{
		\left\{
		\begin{aligned}
		&\dot{z_i}=\Theta z_i\left( \left(\Lambda \mathbf{z}\right)_i-\mathbf{z}^T \Lambda \mathbf{z}\right),\; i=1,\cdots,N,\\
		&z_1 + z_2 + \dots + z_N = 1.
		\end{aligned} 
		\right.}
	\end{equation}
where $\Lambda$ is the square matrix of size $N \times N$ whose coefficients $(i;j)$ are the pairwise invasion fitnesses $\lambda^j_i$ which satisfy
\begin{equation} \label{2.48}
\boxed{\begin{aligned}
\lambda^j_i = \theta_1\left(b_i -b_j \right) &+ \theta_2\left(-\nu_i + \nu_j\right) + \theta_3\left(- u_{ij} -  u_{ji} + 2 u_{jj}\right) \\
&+\theta_4\left(\omega_{ij}^i-\omega_{ji}^j\right)
+ \theta_5 \left(\mu\left(\alpha_{ji} - \alpha_{ij}\right) + \alpha_{ji} - \alpha_{jj}\right)
.
\end{aligned}
}
\end{equation}
with $\mu = \dfrac{I^*}{D^*}$. \\
Then, for any initial values of \eqref{2.1}, for each $\tau_0 > 0$, $T > \tau_0$ arbitrarily and independent on $\epsilon$, there is  $\epsilon_0 > 0$, $C > 0$  and a  vector of positive coefficients $\mathbf{z}_0\in\mathbb{R}^N$ verifying  $\sum_{i=1}^N \mathbf{z}_{0,i} =1$, such that $\forall \epsilon < \epsilon_0$
\begin{equation}
\left|S\left(\dfrac{\tau}{\epsilon} \right) - S^*\right| 
+ \sum_{i=1}^{N}\left|I_i\left(\dfrac{\tau}{\epsilon} \right) - I^*z_i(\tau)  \right| 
+ \sum_{i,j=1}^{N}\left| I_{ij}\left(\dfrac{\tau}{\epsilon} \right) - k\dfrac{I^*T^*}{S^*}z_i(\tau)z_j(\tau) \right| \leq \epsilon C , \quad \forall \tau \in \left(\tau_0,T\right).
\end{equation}
where $S$, $(I_1, I_2, \dots, I_N)$, $\left(I_{ij}\right)_{i,j\in\{1,\dots,N\}}$ is the solution of \eqref{2.1} and $(z_1, z_2, \dots, z_N)$ is the solution of reduced system \eqref{2.47} together with $\mathbf{z}(0)=\mathbf{z}_0$.
\end{thm}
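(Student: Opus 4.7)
The plan is to assemble the theorem from the machinery built in Sections~\ref{sec2.2}--\ref{sec2.4}. The starting point is the rewrite of the full system \eqref{2.1} in the slow-fast form \eqref{CAslow}, obtained after the ansatz \eqref{2.30}--\eqref{2.31} and the linear change of variables $(I_i,J_i)\leftrightarrow (v_i,z_i)$ from \eqref{2.15}. In these coordinates the fast variables $(X,Y,\mathbf{L},\mathbf{v})$ relax on the timescale $t$ while $\mathbf{z}$ evolves on $\tau=\epsilon t$. The first substantial step is to verify the Tikhonov hypotheses for \eqref{CAslow}: one checks that for fixed $\mathbf{z}$ the fast subsystem admits a globally attracting equilibrium $\boldsymbol{\Phi}(\mathbf{z})=(X^*,Y^*,\chi_3\mathbf{L}^*,0)$, which is precisely the content of Lemma~\ref{lemma_fast} (the triangular structure, together with the negative diagonal coefficients $-\beta T^*$, $-(m+\beta k T^*)$, $-m$, $-\xi$, gives asymptotic stability uniform in $\mathbf{z}$, and the attraction is global in the relevant biological domain).

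Once the slow manifold is identified, Theorem~\ref{thm2} reduces the dynamics on $(\tau_0,T)$ to the slow system \eqref{CAslow_slowpart}, and Lemma~\ref{lmm_error} translates the approximation at the level of $(X,Y,\mathbf{L},\mathbf{v},\mathbf{z})$ back into the original compartments, delivering the claimed $O(\epsilon)$ error bound. The next step is to evaluate $F_{z_i}$ on the slow manifold. By Lemma~\ref{glue_lemma}, this quantity splits as $\sum_{d=1}^5 \chi_d z_i f_{z_i}^d(\mathbf{z})$, so it suffices to compute each $f_{z_i}^d$ in isolation, taking $\mathcal{A}=\{d\}$. This is the main technical obstacle: for each $d\in\{1,\ldots,5\}$ one must manipulate \eqref{eq:Fzi} using the slow-manifold identities $I_i=I^*z_i$, $J_i=T^*z_i$ and the explicit formulas for $X^*(\mathbf{z})$, $Y^*(\mathbf{z})$ and $L_i^*(\mathbf{z})$ provided by Lemma~\ref{lemma_fast}. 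The five resulting calculations are carried out in Section~\ref{sec3} and yield precisely the five bracketed expressions in \eqref{2.44}, with the prefactors $\Theta_1,\ldots,\Theta_5$ of \eqref{Theta_i}.

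It remains to recognize the right-hand side of \eqref{2.44} as a replicator equation. Defining $\Theta=\sum_d \Theta_d$ and the convex weights $\theta_d=\Theta_d/\Theta$, an elementary check shows that each of the five bracketed expressions admits the rank-one structure $(\Lambda^d\mathbf{z})_i-\mathbf{z}^T\Lambda^d\mathbf{z}$, where $\Lambda^d$ is the matrix whose $(i,j)$-entry is exactly the $d$-th summand of \eqref{2.48}. Here one repeatedly uses $\sum_i z_i=1$ to absorb constant shifts inside the brackets, together with the antisymmetry $\omega_{ij}^i+\omega_{ij}^j=0$ for the case $d=4$. Summing over $d$ and factoring out $\Theta$ then gives \eqref{2.47} with $\Lambda=\sum_d\theta_d\Lambda^d$, and combining with Lemma~\ref{lmm_error} (applied with initial datum $\mathbf{z}_0$ equal to the projection of the original initial condition through $P^{-1}$) closes the argument. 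The points I would budget the most care for are the explicit form of the $m_d(\mathbf{z})$ appearing in \eqref{Mchii}, where contributions coming from $X^*$ and $Y^*$ mix across traits through the constants $C_X,C_Y$ of Lemma~\ref{glue_lemma}, and the bookkeeping that rewrites the mixed sums in \eqref{2.44} into the clean projection form characteristic of a replicator system.
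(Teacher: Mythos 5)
Your proposal is correct and follows essentially the same route as the paper: the slow-fast reduction via Lemma \ref{lemma_fast} and Tikhonov's theorem, the decomposition of $F_{z_i}$ into single-trait contributions via Lemma \ref{glue_lemma}, the explicit computations of the $f_{z_i}^d$ in Section \ref{sec3}, and the error estimate of Lemma \ref{lmm_error}. The only minor discrepancy is that the paper carries out the explicit calculation only for $d\in\{1,2,3,4\}$ and cites the case $d=5$ from earlier work, whereas you describe all five as done in Section \ref{sec3}.
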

This system \eqref{2.47} is a  general replicator system, which is studied in \cite{Hofbauer}.\\
We have two remarks on $\lambda_i^j$ in \eqref{2.48}. The first is that, each coefficient $\theta_i$, $i\in\{1,2,3,4,5\}$ measures the  weight of each trait perturbation on pairwise invasion fitness. Thus, each $\lambda_{i}^j$ is a weighted average of the perturbations. Secondly, the pairwise invasion fitnesses play an important role in predicting collective dynamics, since $\lambda_{i}^j$ is the pairwise invasion fitness between strains $i$ and $j$, describing the quantitative initial growth rate of $i$ invading an equilibrium set by $j$ alone. In a 2-strain system, recall the final outcome results depend on the signs of the these mutual coefficients between the strains (Table \ref{tab:table2}), mentioned and used in \cite{Gjini,Gjini1,Madec2}.
\begin{table}[H]
	\caption{\textbf{From 2-strain invasion dynamics to collective multi-strain dynamics.} Each pair of strains in the system falls in one of 4 classes, according to $\lambda^2_1$ and $\lambda^1_2$ in \eqref{2.47}: either competitive exclusion of $1$, competitive exclusion of $2$, coexistence, or bistability. The $N$-strain mutual invasion network drives competitive dynamics over long time.}\label{tab:table2}
	\begin{center}
			\begin{tabular}{p{4cm}  p{4cm} p{3cm} p{5cm}} 
				\hline
				\\
				Mutual invasion $\left(\lambda^2_1, \lambda^1_2\right)$ & Pairwise Outcome  & $N$-strain network & Strain freq.
				\\
				\hline
				\textcolor{red}{$(+,+)$} & Stable coexistence   & \multirow{ 4}{*}{\centering \includegraphics[width=0.09 \textwidth]{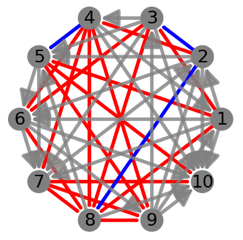}} & \multirow{4}{*}{$\dot{z_i}=\Theta z_i\left( \left(\Lambda \mathbf{z}\right)_i-\mathbf{z}^T \Lambda \mathbf{z}\right)$}\\
				\textcolor{gray}{$(+,-)$} & Exclusion of type 1 & & \multirow{4}{*}{$i=1...N$}\\
				\textcolor{gray}{$(-,+)$} & Exclusion of type 2& &\\
				\textcolor{blue}{$(-,-)$} & Bistability& &\\
				\hline
				\multicolumn{4}{c}{$\lambda^j_i = \theta_1\left(b_i -b_j \right) + \theta_2\left(-\nu_i + \nu_j\right) + \theta_3\left(- u_{ij} -  u_{ji} + 2 u_{jj}\right)
+\theta_4\left(\omega_{ij}^i-\omega_{ji}^j\right)+ \theta_5 \left(\mu\left(\alpha_{ji} - \alpha_{ij}\right) + \alpha_{ji} - \alpha_{jj}\right)$}
\\
\hline
		\end{tabular}
	\end{center}
\end{table}
In the next section, we present explicitly all the necessary computations and we also prove the lemma for the error estimate \ref{lmm_error}.
\section{Proofs and explicit computations}\label{sec3}
Initially, let us  recall the following definitions.
\begin{itemize}
	\item $S$: total proportion of susceptible hosts
	\item $T$: the total proportion of infected hosts (prevalence of colonization)
	\item $I_i$: the proportion of hosts singly-colonized by strain-$i$ 
	\item $I_{ij}$: the proportion of hosts co-colonized by strain-$i$ then strain-$j$ (Including $I_{ii}$).
\end{itemize} 
\subsection{$\mathcal{A}=\{1\}$. Perturbations only in transmission rates $\beta_i$} \label{sec3.1}
Here we compute the functions $f_{z_i}^1$.
In \eqref{CA}, take $\epsilon=0$,  $\chi_1=1$ and $\chi_d=0$ for $d>1$. It comes
\begin{equation} \label{CA1}
\left\{
\begin{aligned}
&\dfrac{dX}{dt} &=& -\beta T^*X + F_X^1\left(\mathbf{v},\mathbf{z}\right)
\\
&\dfrac{dY}{dt} &=& \beta(S^* - T^* -k I^*)X - (m+\beta k T^*)Y 
+ F_Y^1\left(\mathbf{v},\mathbf{z}\right)
\\
&\dfrac{dL_i}{dt} &=& -m L_i \\
&\dfrac{dv_i}{dt} &=& -\xi v_i \\
&\dfrac{dz_i}{dt} &=& 0
\end{aligned}
\right.
\end{equation}
Following the notation of the lemma \ref{lemma_fast}, we obtain
 that the solution $(X,Y,\mathbf{L},\mathbf{v},\mathbf{z})$ of $\eqref{CA1}$ with the initial condition 
$(X,Y,\mathbf{L},\mathbf{v},\mathbf{z})(0)=(X_0,Y_0,\mathbf{L}_0,\mathbf{v}_0,\mathbf{z}_0)\in\R\times \R\times \left(\R^n\right)^3$ verifies

$$\lim_{t\to+\infty}(X,Y,\mathbf{L},\mathbf{v})(t)=\left(X^*(\mathbf{z}_0),Y^*(\mathbf{z}_0),0,0\right).$$
for some functions $X^*(\mathbf{z})$ and $Y^* (\mathbf{z})$ which remains to be compute.\\
Replacing $\mathbf{L}$ and $\mathbf{v}$ by $0$ in the two first equation of \eqref{CA1} yields 
\begin{equation} \label{CA1_details1}
\left\{
\begin{aligned}
&\dfrac{dX}{dt} &=& -\beta T^*X + F_X^1\left(0,\mathbf{z}\right)
\\
&\dfrac{dY}{dt} &=& \beta(S^* - T^* -k I^*)X - (m+\beta k T^*)Y 
+ F_Y^1\left(0,\mathbf{z}\right)
\end{aligned}
\right.
\end{equation}
Note  that $\mathbf{v}=0$ implies that the change of variables \eqref{2.14} reads simply 
$$I_i = I^*z_i,\quad J_i =T^* z_i.$$
The quantities  $F_X^1\left(0,\mathbf{z}\right)$ and $F_Y^1\left(0,\mathbf{z}\right)$ are then easily deducting from  \eqref{2.32}
\begin{equation}
F_X^1\left(0,\mathbf{z}\right)=\beta S^*T^*\sum_{i=1}^N b_j z_j, \qquad F_Y^1\left(0,\mathbf{z}\right)=\beta\left(S^*-k I^*\right)T^*\sum_{i=1}^N b_j z_j. 
\end{equation}
Plugging this in \eqref{CA1_details1}, we obtain
$$X^*(\mathbf{z})= S^*  \sum_{i=1}^N b_j z_j$$
and then
\begin{equation*}
Y^*(\mathbf{z}) = \beta\cdot\dfrac{{S^*}^2 - k I^* S^* - kI^* T^*}{m+\beta k T^*}\sum_{i=1}^N b_j z_j = \beta\cdot\dfrac{{S^*}^2 - k I^* }{m+\beta k T^*}\sum_{i=1}^N b_j z_j.
\end{equation*}
Now, \eqref{eq:Fzi} with the notation of the lemma \ref{glue_lemma}   gives 
\begin{equation}
\begin{aligned}
f_{z_i}^1(\mathbf{z})=
&-\dfrac{\beta}{|P|} \left(k{I^*}^2 - T^*D^* +2{T^*}^2\right)S^* \sum_{i=1}^N b_j z_j 
+ \dfrac{\beta}{|P|} \dfrac{\beta k {T^*}^2}{m+\beta k T^*} \left({S^*}^2 - k I^* \right) \sum_{i=1}^N b_j z_j \\
&- \dfrac{\beta}{|P|}\left(k{I^*}^2T^*\sum_{i=1}^N b_j z_j + b_i D^*T^*S^* - 2b_i {T^*}^2 S^* -kb_i{T^*}^2I^*\right).
\end{aligned}
\end{equation}
Denote
\begin{equation}
\begin{aligned}
&G &=&  -D^*T^*S^* + 2 {T^*}^2 S^* + k{T^*}^2I^* \\
&H &=&  -\left(k{I^*}^2 - T^*D^* +2{T^*}^2\right)S^*+ D^*\left({S^*}^2 - k I^* \right) - k{I^*}^2T^*
\end{aligned}
\end{equation}
then $G = -H = 2{T^*}^2S^* > 0$, by straightforward computations. Setting $\Theta_1 = \dfrac{2\beta {T^*}^2S^*}{|P|}>0$, we have 
\begin{equation} \label{fzi1}
\begin{aligned}
f_{z_i}^1(\mathbf{z})=\Theta_1 \left(b_i-\sum_{j=1}^{N}b_j z_j \right).
\end{aligned}
\end{equation}
 It follows that the slow system \eqref{CAslow_slowpart} reads 
\begin{equation} \label{3.7}
\boxed{\dfrac{dz_i}{d\tau} = \Theta_1 z_i \left(b_i-\sum_{j=1}^{N}b_j z_j \right), \qquad 1 \leq i \leq N.}
\end{equation}
Now we will show the simple computations showing that this system is exactly on the form of the replicator equation \eqref{2.47}. It is clear that the set    $\left\{\mathbf{z}\in[0,1]^N,\;\sum_{i=1}^{N}z_i = 1\right\}$, is conserved for \eqref{3.7}. Hence, \eqref{fzi1} may be rewrite as 
\begin{equation} \label{3.8}
f_{z_i}^1(\mathbf{z})=\Theta_1 \left(\sum_{j=1}^{N}(b_i-b_j) z_j \right). 
\end{equation}
Denoting pairwise invasion fitness between strains $i$ and $j$, $i$ invading in an equilibrium set by $j$, $\lambda_i^j = (b_i-b_j) $ and $\Lambda=(\lambda_i^j)$, we have 
\begin{equation}
f_{z_i}^1(\mathbf{z})=\Theta_1 \left(\Lambda \mathbf{z}\right)_i. 
\end{equation}
Finally, from $\Lambda^T=-\Lambda$ we see that $\mathbf{z}^T \Lambda \mathbf{z}=0$ which leads to the (artificial) representation of \eqref{3.7} : 
\begin{equation} 
\boxed{\dfrac{dz_i}{d\tau} = \Theta_1 z_i \left(\left(\Lambda \mathbf{z}\right)_i- \mathbf{z}^T \Lambda \mathbf{z}\right), \qquad 1 \leq i \leq N.}
\end{equation}
which is nothing but the slow system \eqref{2.47} with $\lambda_i^j=b_i-b_j$.
\subsection{$\mathcal{A}=\{2\}$. Perturbations only in clearance rates of single colonization $\gamma_i$} \label{sec3.2}
Similarly to the case $\mathcal{A} = \left\{1\right\}$, we compute the functions $f_{z_i}^2$.
In \eqref{CA}, take $\epsilon=0$,  $\chi_2=1$ and $\chi_d = 0$ for $d \neq 2$. It comes
\begin{equation} \label{CA2}
\left\{
\begin{aligned}
&\dfrac{dX}{dt} &=& -\beta T^*X + F_X^2\left(\mathbf{v},\mathbf{z}\right)
\\
&\dfrac{dY}{dt} &=& \beta(S^* - T^* -k I^*)X - (m+\beta k T^*)Y 
+ F_Y^2\left(\mathbf{v},\mathbf{z}\right)
\\
&\dfrac{dL_i}{dt} &=& -m L_i \\
&\dfrac{dv_i}{dt} &=& -\xi v_i \\
&\dfrac{dz_i}{dt} &=& 0
\end{aligned}
\right.
\end{equation}
Following the notation of the lemma \ref{glue_lemma}, we obtain that the solution $(X,Y,\mathbf{L},\mathbf{v},\mathbf{z})$ of $\eqref{CA2}$ with the initial condition $(X,Y,\mathbf{L},\mathbf{v},\mathbf{z})(0)=(X_0,Y_0,\mathbf{L}_0,\mathbf{v}_0,\mathbf{z}_0)\in\R\times \R\times \left(\R^n\right)^3$ verifies

$$\lim_{t\to+\infty}(X,Y,\mathbf{L},\mathbf{v})(t)=\left(X^*(\mathbf{z}_0),Y^*(\mathbf{z}_0),0,0\right).$$
for some functions $X^*(\mathbf{z})$ and $Y^* (\mathbf{z})$ which remains to be compute.\\
Replacing $\mathbf{L}$, $\mathbf{K}$ and $\mathbf{v}$ by $0$ in the two first equation of \eqref{CA2} yields 
\begin{equation} \label{CA2_details1}
\left\{
\begin{aligned}
&\dfrac{dX}{dt} &=& -\beta T^*X + F_X^2\left(0,\mathbf{z}\right)
\\
&\dfrac{dY}{dt} &=& \beta(S^* - T^* -k I^*)X - (m+\beta k T^*)Y 
+ F_Y^2\left(0,\mathbf{z}\right)
\end{aligned}
\right.
\end{equation}
The quantities  $F_X^2\left(0,\mathbf{z}\right)$ and $F_Y^2\left(0,\mathbf{z}\right)$ are then easily deducting from  \eqref{2.32} 
\begin{equation}
F_X^2\left(0,\mathbf{z}\right)=-  \gamma I^*\sum_{i=1}^{N}\nu_i z_i, \qquad F_Y^2\left(0,\mathbf{z}\right)=- \gamma I^*\sum_{i=1}^{N}\nu_iz_i.
\end{equation}
Plugging this in \eqref{CA2_details1}, we obtain
$$X^*(\mathbf{z})= - \dfrac{\gamma I^*}{\beta^2 T^*}\sum_{i=1}^N \nu_i z_i $$
and then
\begin{equation*}
Y^*(\mathbf{z}) = \dfrac{ \gamma I^*(kI^* - S^*)}{T^*(m + \beta k T^*)} \sum_{i=1}^N\nu_i z_i.
\end{equation*}
Now, \eqref{eq:Fzi} with the notation of the lemma \ref{glue_lemma}   gives 
\begin{equation} \label{3.14}
\begin{aligned}
f_{z_i}^2(\mathbf{z})=
\dfrac{\gamma}{|P|}\left[-\nu_i I^*(I^* + T^*) +\dfrac{ I^*\left(kI^* T^* - D^* +2T^* \right)}{T^*}\sum_{j=1}^N\nu_j z_j\right].
\end{aligned}
\end{equation}
By straightforward computations we can verify that
\begin{equation}
\begin{aligned}
\left(-kI^* T^* + D^* -2T^* \right) + T^*(I^* + T^*) = 0.
\end{aligned}
\end{equation} 
Setting $\Theta_2 = \dfrac{\gamma I^*\left(I^* + T^*\right)}{|P|}>0$, we have 
\begin{equation} \label{fzi2}
\begin{aligned}
f_{z_i}^2(\mathbf{z})=\Theta_2 \left(-\nu_i+\sum_{j=1}^{N}\nu_j z_j \right).
\end{aligned}
\end{equation}
It follows that the slow system \eqref{CAslow_slowpart} reads 
\begin{equation} \label{3.17}
\boxed{\dfrac{dz_i}{d\tau} = \Theta_2 z_i \left(-\nu_i+\sum_{j=1}^{N}\nu_j z_j \right), \qquad 1 \leq i \leq N.}
\end{equation}
By the same arguments in section \ref{sec3.1}, we can show the simple computations showing that this system is exactly on the form of the replicator equation \eqref{2.47}. Denoting the pairwise invasion fitness $$\lambda_i^j = (-\nu_i+\nu_j) $$ and $\Lambda=(\lambda_i^j)$, we have 
\begin{equation} \label{3.18}
f_{z_i}^2(\mathbf{z})=\Theta_2 \left(\Lambda \mathbf{z}\right)_i. 
\end{equation}
Finally, from $\Lambda^T=-\Lambda$ we see that $\mathbf{z}^T \Lambda \mathbf{z}=0$ which leads to the (artificial) representation of \eqref{3.17} : 
\begin{equation} 
\boxed{\dfrac{dz_i}{d\tau} = \Theta_2 z_i \left(\left(\Lambda \mathbf{z}\right)_i- \mathbf{z}^T \Lambda \mathbf{z}\right), \qquad 1 \leq i \leq N.}
\end{equation}
which is nothing but slow system \eqref{2.47} with $\lambda_i^j= - \nu_i + \nu_j$.
\subsection{$\mathcal{A}=\{3\}$. Perturbations only in clearance rates of co-colonization $\gamma_{ij}$} \label{sec3.3}
Similarly to the case $\mathcal{A} = \left\{1\right\}$, we compute the functions $f_{z_i}^3$.
In \eqref{CA}, take $\epsilon=0$,  $\chi_3=1$ and $\chi_d=0$ for $d \neq 3$. It comes
\begin{equation} \label{CA3}
\left\{
\begin{aligned}
&\dfrac{dX}{dt} &=& -\beta T^*X + F_X^3\left(\mathbf{L}\right)
\\
&\dfrac{dY}{dt} &=& \beta(S^* - T^* -k I^*)X - (m+\beta k T^*)Y 
\\
&\dfrac{dL_i}{dt} &=& -m L_i + F_{L_i}\left(\mathbf{v},\mathbf{z}\right)\\
&\dfrac{dv_i}{dt} &=& -\xi v_i \\
&\dfrac{dz_i}{dt} &=& 0
\end{aligned}
\right.
\end{equation}
Following the notation of the lemma \ref{glue_lemma}, we obtain that the solution $(X,Y,\mathbf{L},\mathbf{v},\mathbf{z})$ of $\eqref{CA3}$ with the initial condition $(X,Y,\mathbf{L},\mathbf{v},\mathbf{z})(0)=(X_0,Y_0,\mathbf{L}_0,\mathbf{v}_0,\mathbf{z}_0)\in\R\times \R\times \left(\R^n\right)^3$ verifies
$$\lim_{t\to+\infty}(X,Y,\mathbf{L},\mathbf{v})(t)=\left(X^*(\mathbf{z}_0),Y^*(\mathbf{z}_0),{\mathbf{L}}^*(\mathbf{z}_0),0,0\right).$$
for some functions $X^*(\mathbf{z})$, $Y^* (\mathbf{z})$ and ${\mathbf{L}}^*(\mathbf{z}_0)$ which remains to be compute.\\
Replacing  $\mathbf{v}$ by $0$ in the two first equation of \eqref{CA3} yields 
\begin{equation} \label{CA3_details1}
\left\{
\begin{aligned}
&\dfrac{dX}{dt} &=& -\beta T^*X + F_X^3\left(\mathbf{L}\right)
\\
&\dfrac{dY}{dt} &=& \beta(S^* - T^* -k I^*)X - (m+\beta k T^*)Y \\
&\dfrac{dL_i}{dt} &=& -mL_i + F_{L_i}\left(0,\mathbf{z}\right)
\end{aligned}
\right.
\end{equation}
The quantities  $F_{L_i}\left(0,\mathbf{z}\right)$ and $F_X^3\left(\mathbf{L}\right)$ are then easily deducting from  \eqref{2.32}.
\begin{equation}
F_X^3\left(\mathbf{L}\right) = -\gamma\sum_{i=1}^{N}L_i, \qquad F_{L_i}\left(0,\mathbf{z}\right)= \dfrac{1}{2}\beta k I^*T^*z_i\sum_{j=1}^{N}\left( u_{ij} +  u_{ji}\right)z_j.
\end{equation}
Plugging this in \eqref{CA3_details1}, we obtain
\begin{equation} \label{L*}
{L_i}^*(\mathbf{z})= \dfrac{1}{2m}\beta k I^*T^*z_i\sum_{j=1}^{N}\left( u_{ij} +  u_{ji}\right)z_j,
\end{equation}
then we deduce that
\begin{equation*}
X^*(\mathbf{z}) = - \dfrac{\gamma k I^*}{2m}\sum_{i,j = 1}^N( u_{ij} +  u_{ji})z_i z_j
\end{equation*}
and
\begin{equation*}
Y^*(\mathbf{z}) = - \dfrac{\beta\gamma k I^*(S^* - T^* - kI^*)}{2m(m+ \beta k T^*)}\sum_{i,j = 1}^N( u_{ij}+ u_{ji})z_i z_j .
\end{equation*}
Now, \eqref{eq:Fzi} with the notation of the lemma \ref{glue_lemma}   gives 
\begin{equation} \label{3.23}
\begin{aligned}
f_{z_i}^3(\mathbf{z})=
\dfrac{\gamma}{|P|} \left[\dfrac{\beta k I^* {T^*}^2}{m}\sum_{j,l=1}^{N}( u_{jl} -  u_{lj})z_l z_j
+\dfrac{\beta  k I^* {T^*}^2}{m}\sum_{j=1}^{N}( u_{ij} +  u_{ji})z_j\right].
\end{aligned}
\end{equation}
It's trivial to see that $\dfrac{\beta  k I^* {T^*}^2}{m} = T^*D^*$. Setting $\Theta_3 = \dfrac{\gamma T^*D^*}{|P|}>0$, we have 
\begin{equation} \label{fzi3}
\begin{aligned}
f_{z_i}^3(\mathbf{z})=\Theta_3\left[-\sum_{j=1}^{N}( u_{ij}+  u_{ji})z_j + \sum_{j,l=1}^{N}( u_{jl} +  u_{lj})z_l z_j \right].
\end{aligned}
\end{equation}
It follows that the slow system \eqref{CAslow_slowpart} reads 
\begin{equation} \label{3.25}
\boxed{\dfrac{dz_i}{d\tau} = \Theta_3z_i\left[-\sum_{j=1}^{N}( u_{ij}+  u_{ji})z_j + \sum_{j,l=1}^{N}( u_{jl} +  u_{lj})z_l z_j \right], \qquad 1 \leq i \leq N.}
\end{equation}
By the same arguments in section \ref{sec3.1}, we can show the simple computations showing that this system is exactly on the form of the replicator equation \eqref{2.47}. Denoting pairwise invasion fitness $$\lambda_i^j = - u_{ij} -  u_{ji} + 2 u_{jj} $$ and $\Lambda=(\lambda_i^j)$, we have 
\begin{equation} \label{3.26}
f_{z_i}^3(\mathbf{z})=\Theta_3 \left(\left(\Lambda \mathbf{z}\right)_i - \mathbf{z}^T\Lambda\mathbf{z}\right). 
\end{equation}
Finally, we see the (artificial) representation of \eqref{3.25} : 
\begin{equation} 
\boxed{\dfrac{dz_i}{d\tau} = \Theta_3 z_i \left(\left(\Lambda \mathbf{z}\right)_i- \mathbf{z}^T \Lambda \mathbf{z}\right), \qquad 1 \leq i \leq N.}
\end{equation}
which is nothing but replicator system \eqref{2.47} with $\lambda_i^j= - u_{ij} -  u_{ji} + 2 u_{jj}$.
\subsection{$\mathcal{A}=\{4\}$. 
Perturbations only in transmission coefficients from mixed co-colonization  $p^i_{ij}$} \label{sec3.4}
Similarly to the case $\mathcal{A} = \left\{1\right\}$, we compute the functions $f_{z_i}^4$.
In \eqref{CA}, take $\epsilon=0$,  $\chi_4=1$ and $\chi_d=0$ for $d \neq 4$. It comes
\begin{equation} \label{CA5}
\left\{
\begin{aligned}
&\dfrac{dX}{dt} &=& -\beta T^*X 
\\
&\dfrac{dY}{dt} &=& \beta(S^* - T^* -k I^*)X - (m+\beta k T^*)Y 
\\
&\dfrac{dL_i}{dt} &=& -m L_i \\
&\dfrac{dv_i}{dt} &=& -\xi v_i \\
&\dfrac{dz_i}{dt} &=& 0
\end{aligned}
\right.
\end{equation}
Following the notation of the lemma \ref{glue_lemma}, we obtain that the solution $(X,Y,\mathbf{L},\mathbf{v},\mathbf{z})$ of $\eqref{CA5}$ with the initial condition $(X,Y,\mathbf{L},\mathbf{v},\mathbf{z})(0)=(X_0,Y_0,\mathbf{L}_0,\mathbf{v}_0,\mathbf{z}_0)\in\R\times \R\times \left(\R^n\right)^3$ verifies
$$\lim_{t\to+\infty}(X,Y,\mathbf{L},\mathbf{v})(t)=\left(X^*(\mathbf{z}_0),Y^*(\mathbf{z}_0),0,0\right).$$
for some functions $X^*(\mathbf{z})$ and $Y^* (\mathbf{z})$  which remains to be compute.\\
The two first equation of \eqref{CA5} reads
\begin{equation} \label{CA5_details1}
\left\{
\begin{aligned}
&\dfrac{dX}{dt} &=& -\beta T^*X 
\\
&\dfrac{dY}{dt} &=& \beta(S^* - T^* -k I^*)X - (m+\beta k T^*)Y \\
\end{aligned}
\right.
\end{equation}
So $X^*(\mathbf{z}_0)=0$ and ,$Y^*(\mathbf{z}_0)=0$.
Now, \eqref{eq:Fzi} with the notation of the lemma \ref{glue_lemma}   gives 
\begin{equation} \label{3.31}
\begin{aligned}
f_{z_i}^4(\mathbf{z})=
 \dfrac{1}{|P|}\left[ 2(T^*)^2 I^* \beta k \sum_{j=1}^N (\omega_{ij}^i+\omega_{ji}^i)z_j\right].
\end{aligned}
\end{equation}
Note that $2\beta k {T^*}^2I^* =  2mT^*D^*$. From $\omega_{ji}^i=-\omega_{ji}^j$ we see that
 the slow system \eqref{CAslow_slowpart} reads 
\begin{equation}\label{3.34}
\boxed{
	\dfrac{dz_i}{d\tau} = \Theta_4 z_i\sum_{j=1}^N \left(\omega_{ij}^i-\omega_{ji}^j\right)z_j , \qquad 1 \leq i \leq N
}
\end{equation}
with $\Theta_4=\dfrac{2mT^*D^*}{|P|}$.\\
Denote the $N\times N$ matrix $\Omega=(\omega_{ij}^i)_{i,j}$ and $\Lambda=\Omega-\Omega^T$.  We may rewrite this equation as 
\begin{equation} 
\boxed{\dfrac{dz_i}{d\tau} = \Theta_4z_i \left(\left(\Lambda \mathbf{z}\right)_i\right), \qquad 1 \leq i \leq N.}
\end{equation}
Finally, noting that $\Lambda=-\Lambda^T$ is skew symmetric, we have $\mathbf{z}^T\Lambda \mathbf{z}=0$ so the slow equation reads 
\begin{equation} 
\boxed{
	\dfrac{dz_i}{d\tau} = \Theta_4z_i \left( \left( \Lambda \mathbf{z}\right)_i-\mathbf{z}^T\Lambda \mathbf{z}\right), \qquad 1 \leq i \leq N.}
\end{equation}
which is nothing but \eqref{2.47} with $\lambda^j_i = \omega_{ij}^i-\omega_{ji}^j$.\\ \\
Remark that, this system leads to family of closed trajectories of an odd number $\tilde{N}$ of persistent strains but it is structurally unstable (except if $\tilde{N}=1$), see \cite{Chawanya}. Hence, in this case $\mathcal{A}=\{4\}$ we need to compute the term in $\epsilon^2$ in the expansion, which we do not do in this text.
However, when there are perturbations in other terms then the deviation in this trait conducts to interesting non trivial dynamics, which is shown in sections \ref{sec4.5} and \ref{sec4.6}. This is similar to the case of large $\mu$ with perturbation in co-colonization interaction factor $k_{ij}$, i.e. $\mathcal{A} = \{5\}$, see \cite{Gjini1}. We find that for $\mu \to 0$ and random $\alpha_{ij}$, we have a case of Generalized Lotka-Volterra (GLV) dynamics with constant growth rates and random interactions. Meanwhile, if $\mu \gg 1$, dynamics converge to hyper-tournament dynamics studied by \cite{Allesina5638} for anti-symmetric matrix of interaction $\mathsf{W}$ with $\mathsf{W}_{ij} = \pm 1$; and by \cite{Grilli} for the case in which all the eigenvalues of $\mathsf{W} + \mathsf{W}^T$ are negative.

\subsection{Proof of lemma \ref{lmm_error} of error estimate \label{sec3.5}}
\begin{lmm} \label{lmm11}
	The solution $\left(z_i\right)_{i=1,\dots,N}$ of the slow-fast form system \eqref{CAslow} tends to the solution of the slow system \eqref{CAslow_slowpart} as $\epsilon \to 0$ locally uniformly in time on $[\tau_0,T]$, with $\tau_0 > 0$, $T > \tau_0$ arbitrarily and independent on $\epsilon$.
\end{lmm}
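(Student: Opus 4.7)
The plan is to verify the hypotheses of Tikhonov's theorem (Theorem \ref{thm2}) and apply it directly to the slow-fast system \eqref{CAslow}. Following the notation of Theorem \ref{thm2} with time $\tau = \epsilon t$, the slow variable is $x = \mathbf{z} \in \mathbb{R}^N$ and the fast variable is $y = (X, Y, \mathbf{L}, \mathbf{v}) \in \mathbb{R}^{2+2N}$. The three hypotheses, which we label (a), (b), (c), must each be checked for our system.

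First I would verify hypothesis (a). The right-hand side of \eqref{CAslow} is polynomial in all its arguments, and the compartmental variables lie in a bounded region (the total mass conservation plus positivity confine trajectories to a compact invariant set, with $\mathbf{z}$ in the simplex $\sum_i z_i = 1$). Standard ODE theory therefore gives local existence and uniqueness for both the full perturbed system and the reduced outer problem \eqref{CAslow_slowpart} on any bounded interval $[0,T]$, so this hypothesis is essentially free.

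The heart of the proof, and the step that requires the most care, is hypothesis (b): the assertion that $\bar{y} = \mathbf{\Phi}(\mathbf{z})$ produced by Lemma \ref{lemma_fast} is an isolated and uniformly asymptotically stable equilibrium of the frozen-$\mathbf{z}$ fast dynamics. The plan is to exploit the triangular structure already exposed in the proof of Lemma \ref{lemma_fast}: the $\mathbf{v}$-equation decouples and decays like $e^{-\xi t}$ with $\xi > 0$; feeding $\mathbf{v} \to 0$ into the $\mathbf{L}$-equation, each $L_i$ converges to $\chi_3 L_i^*(\mathbf{z})$ at rate $m$; once $(\mathbf{v},\mathbf{L})$ are fixed at their limits, $X$ converges to $X^*(\mathbf{z})$ at rate $\beta T^*$ and $Y$ to $Y^*(\mathbf{z})$ at rate $m + \beta k T^*$. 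All four decay rates are strictly positive constants depending only on the neutral parameters, and in particular are independent of $\mathbf{z}$, so the equilibrium $\mathbf{\Phi}(\mathbf{z})$ attracts globally and exponentially with a rate uniform in $\mathbf{z}$ on the invariant simplex. Isolation of the root of $g(\mathbf{z},y)=0$ is immediate because the linearization of the frozen fast vector field is block triangular with the four nonzero eigenvalues $-\xi, -m, -\beta T^*, -(m+\beta k T^*)$, hence invertible. Hypothesis (c) then comes for free: global attraction on all of $\mathbb{R}^{2+2N}$ means any physically admissible initial condition belongs to the basin of $\mathbf{\Phi}(\mathbf{z}(0))$.

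With (a), (b), (c) in hand, Theorem \ref{thm2} delivers $\lim_{\epsilon \to 0} \mathbf{z}_\epsilon(\tau) = \mathbf{z}(\tau)$ uniformly on $[0,T]$, which in particular gives the stated convergence on $[\tau_0,T]$ for any $\tau_0 > 0$. The only difficulty I anticipate is bookkeeping: chaining the cascade of asymptotic arguments for $\mathbf{v} \to \mathbf{L} \to X \to Y$ while keeping the estimates uniform in $\mathbf{z}$ on the compact simplex, and justifying that higher-order $O(\epsilon)$ terms do not spoil the isolated-root and uniform-attractor properties. No deep analytic obstacle should appear, because the neutral backbone has been engineered precisely so that all four fast rates are strictly positive constants independent of the slow variable.
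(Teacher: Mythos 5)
Your proposal is correct and follows essentially the same route as the paper: verify Tikhonov's hypotheses, with existence/uniqueness coming from the polynomial (hence locally Lipschitz) right-hand side on a bounded time interval, and uniform asymptotic stability of the slow manifold coming from the triangular cascade $\mathbf{v}\to\mathbf{L}\to X\to Y$ with the strictly positive rates $\xi$, $m$, $\beta T^*$, $m+\beta k T^*$, which is exactly the content of Lemma \ref{lemma_fast} that the paper's proof invokes. If anything, you are more explicit than the paper about the isolated-root condition and the uniformity of the attraction rate in $\mathbf{z}$, which the paper leaves implicit.
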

\begin{proof}
	It suffices to verify the conditions for Tikhonov's theorem, see Theorem \eqref{thm2}.\\
	$\large\bullet$ Firstly, we prove that \eqref{CAslow} with initial values possesses the unique solution.\\
	The system \eqref{CAslow} with initial values can be rewritten into
	\begin{equation} \label{3.36}
	\dfrac{dx}{d\tau} = f(x), \qquad x(0) = x_0,
	\end{equation}
	where $x = \left(X,Y,\mathbf{L},\mathbf{v},\mathbf{z}\right)$, then $x(\tau) \in \mathbb{R}^{3N+2}$. We note that the function $f$ of \eqref{3.36} is a vector function with all the components are polynomial of variables $\left(X,Y,\mathbf{L},\mathbf{v},\mathbf{z}\right)$ (explicitly computed in sections \ref{sec2.4}, \ref{sec3.3} and \ref{sec3.4})and we work in the bounded set $[0,T]$ of time where all the functions $\left(X,Y,\mathbf{L},\mathbf{v},\mathbf{z}\right)$ are differentiable. Hence, $f$ is global Lipschitz and the uniqueness of solution for \eqref{CAslow} follows, according to the Picard-Lindelof Theorem, see Theorem 2.2 in \cite{Geral}.\\
	Implement analogously for \eqref{CAslow_slowpart}, we acquire the same conclusion for the uniqueness of solution.\\ \\
	$\large\bullet$ Secondly, by the proof of lemma \ref{lemma_fast}, we have that the solution $(X,Y,\mathbf{L},\mathbf{v},\mathbf{z})$ of $\eqref{CA}$ with any initial condition $$(X,Y,\mathbf{L},\mathbf{v},\mathbf{z})(0)=(X_0,Y_0,\mathbf{L}_0,\mathbf{v}_0,\mathbf{z}_0)\in\R\times \R\times \left(\R^n\right)^3$$
	verifies $\mathbf{z}(t)=\mathbf{z}_0$ for all $t\geq 0$ and 
	$$\lim_{t\to+\infty}(X,Y,\mathbf{L},\mathbf{v})(t)=\mathbf{\Phi}(\mathbf{z}_0)$$
	asymptotically, in which, $\Phi\left(\mathbf{z}\right) = \left(X^*(\mathbf{z}),Y^*(\mathbf{z}),\chi_3\mathbf{L}^*(\mathbf{z}),0\right)$ satisfy the system \eqref{CAslow} in slow timescale, with $\epsilon = 0$ as follows
	\begin{equation}
	\left\{
	\begin{aligned}
	&0 &=& -\beta T^*X +\chi_1 F_X^1\left(\mathbf{v},\mathbf{z}\right)
	+\chi_2 F_X^2\left(\mathbf{v},\mathbf{z}\right)
	+\chi_3 F_X^3\left(\mathbf{L}\right) + O(\epsilon)\\
	&0 &=& \beta(S^* - T^* -k I^*)X - (m+\beta k T^*)Y 
	+\chi_1 F_Y^1\left(\mathbf{v},\mathbf{z}\right)
	+\chi_2 F_Y^2\left(\mathbf{v},\mathbf{z}\right)
	+\chi_5 F_Y^5\left(\mathbf{v},\mathbf{z}\right)\\
	&0 &=& -m L_i + \chi_3 F_{L_i} \left(\mathbf{v},\mathbf{z}\right)\\
	&0 &=& -\xi v_i
	\end{aligned}
	\right.
	\end{equation}
	Applying Tikhonov's Theorem, we have the required conclusion.
\end{proof}
Let us now approximate the solution of the original dynamics \eqref{2.1} using the solution of slow-fast form \ref{CAslow}, when $\epsilon$ is small enough.
\begin{lmm} \label{thm6}
	Under our assumptions, for any initial values of \eqref{2.1}, there exists $\tau_0 > 0$ and initial value $\mathbf{z}\left(\tau_0\right)$ of \eqref{CAslow}, such that for any $T>\tau_0$, there are $\epsilon_0 > 0$ and $C_T > 0$ satisfies $\forall \epsilon < \epsilon_0$
	\begin{equation} \label{3.37}
	\left|S\left(\dfrac{\tau}{\epsilon} \right) - S^*\right| + \sum_{i=1}^{N}\left|I^*z_i(\tau) - I_i\left(\dfrac{\tau}{\epsilon} \right)\right| + \sum_{i=1}^{N}\left|T^*z_i(\tau) - J_i\left(\dfrac{\tau}{\epsilon} \right)\right| \leq \epsilon C_T ,
	\end{equation}
	for all $\tau_0 \leq \tau \leq T$, where $\left(S, I_i, J_i\right)_{i = 1,\dots,N}$ verifies \eqref{2.1} and $(z_1, \dots, z_N)$ is the solution of \eqref{CAslow}.
\end{lmm}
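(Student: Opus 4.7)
The plan is to combine Lemma \ref{lmm11} (Tikhonov convergence in the slow--fast form) with the quantitative error estimate of Theorem \ref{thm5}, and then translate the bounds back to the original SIS variables via the change of variables \eqref{2.15} and the ansatz \eqref{2.30}.

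First, I would choose the initial data for the reduced system. Starting from arbitrary initial values for \eqref{2.1}, the auxiliary variables $(X,Y,\mathbf{L},\mathbf{v},\mathbf{z})$ are defined through \eqref{2.30}--\eqref{2.31} and \eqref{2.15}. Tikhonov's theorem has an initial boundary layer: the fast variables $(X,Y,\mathbf{L},\mathbf{v})$ need a $\tau$-time of order $\epsilon$ to relax onto the slow manifold $\mathbf{\Phi}(\mathbf{z})$ described in Lemma \ref{lemma_fast}. I would therefore fix any $\tau_0>0$ and let $\mathbf{z}(\tau_0)$ be the value of the slow variable at $\tau=\tau_0$; the vector $\mathbf{z}_0:=\mathbf{z}(\tau_0)$ lies in the simplex because, by the fast-equation analysis of Section \ref{sec2.3}, $z_i$ coincides at leading order with the strain frequency in the total infected pool.

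Next I would apply Theorem \ref{thm5} to the slow--fast system \eqref{CAslow}. On $[\tau_0,T]$ both the full system and its Tikhonov reduction are smooth with Lipschitz perturbation $R(\tau,x,\epsilon)$, so the theorem provides a constant $C_T$ (depending on $T$ but not on $\epsilon$) such that, for $\tau\in[\tau_0,T]$,
\begin{equation}\label{err_inner}
|X(\tau)-X^*(\mathbf{z}(\tau))|+|Y(\tau)-Y^*(\mathbf{z}(\tau))|+\sum_i|L_i(\tau)-\chi_3 L_i^*(\mathbf{z}(\tau))|+\sum_i|v_i(\tau)|+\sum_i|z_i(\tau)-\bar z_i(\tau)|\leq C_T\epsilon,
\end{equation}
where $\bar z_i$ solves the reduced slow system \eqref{CAslow_slowpart}. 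The $\tau_0>0$ is essential only for the fast components; the slow component $\mathbf{z}$ is in fact close to $\bar{\mathbf{z}}$ on all of $[0,T]$.

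Finally I would pull these bounds back to the original variables. From \eqref{2.30} one has $S(\tau/\epsilon)=S^*-\epsilon X(\tau)+O(\epsilon^2)$, and since \eqref{err_inner} shows that $X(\tau)$ is bounded on $[\tau_0,T]$, this yields $|S(\tau/\epsilon)-S^*|\leq \widetilde C_T\epsilon$. The change of variables \eqref{2.15} gives
\begin{equation*}
\begin{pmatrix}I_i\\J_i\end{pmatrix}=2T^*\begin{pmatrix}1\\ D^*/(2T^*)\end{pmatrix}v_i+\begin{pmatrix}I^*\\T^*\end{pmatrix}z_i,
\end{equation*}
so $|v_i|=O(\epsilon)$ together with $|z_i-\bar z_i|=O(\epsilon)$ produces $|I_i(\tau/\epsilon)-I^*\bar z_i(\tau)|\leq \widetilde C_T\epsilon$ and the analogous bound for $J_i$. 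For the coinfected compartments $I_{ij}$, I would invoke the argument already given around \eqref{2.17}--\eqref{2.20}: once $I_i$ and $J_j$ are known up to $O(\epsilon)$, the linear equation for $I_{ij}$ shows $I_{ij}(\tau/\epsilon)$ is $O(\epsilon)$-close to the quasi-equilibrium $(\beta k/m)I^*T^*\bar z_i\bar z_j=k(I^*T^*/S^*)\bar z_i\bar z_j$, which is the quantity appearing in Lemma \ref{lmm_error}. Summing all these estimates gives the stated inequality.

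The main obstacle is the uniformity in $\tau$: the crude Gronwall estimate degrades exponentially in $T$, so one must use the fact that the reduced slow dynamics stays in a compact region of the simplex (positivity of $\bar z_i$ is preserved, $\sum\bar z_i=1$), which confines the slow manifold $\mathbf{\Phi}(\mathbf{z})$ to a compact set on which all coefficients are smooth. Once this compactness is secured, the Lipschitz constants in Theorem \ref{thm5} can be taken uniform on $[\tau_0,T]$, and the constant $C_T$ in \eqref{3.37} is obtained explicitly in terms of the bounds on the perturbation parameters and the neutral-system equilibrium $(S^*,T^*,I^*,D^*)$.
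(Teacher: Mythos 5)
Your overall strategy (quantitative expansion estimate plus Tikhonov plus the changes of variables \eqref{2.30} and \eqref{2.15}) is the same as the paper's, and the final translation back to $S$, $I_i$, $J_i$ via $v_i=O(\epsilon)$ is exactly what is needed. But the central step, the estimate \eqref{err_inner} obtained by "applying Theorem \ref{thm5} to the slow--fast system \eqref{CAslow}", has a genuine gap. Theorem \ref{thm5} applies to a \emph{regular} perturbation $\frac{dx}{dt}=f_0(t,x)+\epsilon R(t,x,\epsilon)$, whereas \eqref{CAslow} is \emph{singularly} perturbed: $\epsilon$ multiplies the derivatives of the fast variables $X,Y,\mathbf{L},\mathbf{v}$. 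Writing $\epsilon \frac{dX}{d\tau}=\cdots$ as $\frac{dX}{d\tau}=\frac{1}{\epsilon}(\cdots)$ destroys the uniform-in-$\epsilon$ Lipschitz hypothesis, so Theorem \ref{thm5} gives nothing on $[\tau_0,T]$ in the slow time. What Tikhonov's theorem (Theorem \ref{thm2}) does give for \eqref{CAslow} is only the qualitative limit $\lim_{\epsilon\to 0}$, not the $O(\epsilon)$ rate for the distance of the fast variables to the slow manifold; the rate you assert in \eqref{err_inner} would require a quantitative singular-perturbation (Fenichel-type) result that is not among the tools stated in the paper. The paper avoids this by staying on the \emph{fast} time scale: it applies Theorem \ref{thm5} to \eqref{2.8} viewed as an $O(\epsilon)$ regular perturbation of the neutral system \eqref{2.9}, then uses the global attractivity of $(S^*,T^*,I^*)$ for the neutral triangular structure (Section \ref{sec2.3}) and the exponential decay $v_i\sim e^{-\xi t}$ to absorb the boundary layer into the choice of $\tau_0>0$, and finally observes that the passage from \eqref{2.8} to \eqref{CAslow} consists of linear, $\epsilon$-uniform changes of variables that preserve $O(\epsilon)$ errors. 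If you want to keep your slow-time formulation, you must either prove the quantitative slow-manifold attraction separately (using that the linearized fast dynamics has spectrum $\{-\beta T^*, -(m+\beta k T^*), -m, -\xi\}$, all negative) or restructure the argument on the fast time scale as the paper does.

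A smaller point: the estimate for the co-colonized compartments $I_{ij}$ that you sketch at the end is not part of the statement of this lemma (which concerns only $S$, $I_i$, $J_i$); in the paper it is handled separately in Lemma \ref{lmm12}, and including it here requires the conclusion of the present lemma as input, so it cannot be folded into the same induction without circularity. Your closing remark about Gronwall constants degrading in $T$ identifies a real issue, but the dangerous growth is in the fast time $t\le T/\epsilon$, not in $T$ itself; compactness of the simplex alone does not resolve it --- only the exponential stability of the fast subsystem does.
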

\begin{proof}
	To prove this lemma, we make two steps, one is to prove the error estimate between $S^*$, $T^*$, $I^*$ and the solution $\left(S,T,I\right)$ of \eqref{2.8}, the other one is approximating the solutions of \eqref{2.8} using the solution of \eqref{CAslow}.\\ \\
	$\large\bullet$ \textit{First step}, we wish to apply the Expansion Theorem \ref{thm5}. Note that, if \eqref{2.8} satisfies the conditions of Theorem \ref{thm5} because of the property of global Lipschitz, then it will also fulfill the conditions of the Picard-Lindelof, see Theorem 2.2 in \cite{Geral}. Thus, if that, for each initial value, \eqref{2.8} always has the unique solution. Therefore, it's suffices to verify the two conditions mentioned in Theorem \ref{thm5}, including the global Lipschitz properties.\\ \\
	Denote $x = \left(S, I_1, I_2, \dots, I_N, J_1, J_2, \dots, J_N \right)$. By the extract of \eqref{2.8} for $S, I_i, J_i$, $1 \leq i \leq N$, we write the system for $\left(S, T, I_i, J_i\right)$, $i = 1,\dots, N$ in \eqref{2.8} into the following form
	\begin{equation}
	\dfrac{dx}{dt} = f_0(t,x) + \epsilon f_1(t,x)
	\end{equation}
	and in any bounded domain $|t-t_0| \leq h$ we have
	\begin{enumerate}
		\item[1.] $f_0(t,x)$ is continuous in $t$, continuously differentiable in $x$;
		\item[2.] $f_1(t,x)$ continuous in $t,x$, Lipschitz-continuous in $x$.
	\end{enumerate}
	According to this extraction, $f_0(t,x)$ and $f_1(t,x)$ are well-defined.  Note that the function $f_0(t,x)$ has the $\left(f_S(t,x), f_{I_1}(t,x), \dots, f_{I_N}(t,x),f_{J_1}(t,x), \dots, f_{J_N}(t,x)\right)$ for $f_S$, $f_{I_i}$, $f_{J_i}$ are functions $\mathbb{R}^{2N+1} \to \mathbb{R}$, for all $1 \leq i \leq N$. The function $f_1(t,x)$ has the same form as well.\\ 
	It's easy to see that $f_0(t,x)$ is continuous in $t$, continuously differentiable in $x$ and the function $f_1(t,x)$ continuous in $t,x$. It remains to prove that $f_1(t,x)$ is Lipschitz-continuous in $x$ in each bounded domain $|t - t_0| \leq h$, for all $h \in \mathbb{R}^+$. Indeed, $f_1(t,x)$ is a polynomial in multi variables $\left(S, T, I_i, J_i \right)$, $i = 1,\dots,N$, and note that $S + T = 1$. In consequence, it is Lipschitz-continuous.\\ \\
	By the earlier arguments, if $x^r = \left(S^r, T^r, I^r_i, J^r_i \right)$ satisfies the neutral system \eqref{2.9} and $x = \left(S, T, I_i, J_i \right)_{1\leq i \leq N}$ satisfying \eqref{2.8} then $\left\|x - x^r\right\|_{\mathbb{R}^{2N+2}} = O(\epsilon)$.\\ \\
	Therefore, note that $I =\sum_{i=1}^{N} I_i$, we deduce the solution of \eqref{2.8} can be approximated using neutral system. Combine with the arguments in section \ref{sec2.3}, the approximation of solution $(S,T,I)$ of \eqref{2.8} by $\left(S^*,T^*,I^*\right)$ is accordingly plausible in the sense of $O\left(\epsilon\right)$. We have done our first step.\\ \\
	$\large\bullet$ \textit{Second step}, we claim that all the algebraic and linear transformations from \eqref{2.8} to \eqref{CAslow} are equivalent with error estimate $O\left(\epsilon\right)$, including changing $\left(S,T,I\right)$ to $\left(X,Y\right)$ using $S^*, T^*, I^*$ (proved in the first part), changing $\begin{pmatrix}
	I_i \\ J_i
	\end{pmatrix}$ to $\begin{pmatrix}
	v_i \\ z_i
	\end{pmatrix}$ (linear operator) and changing to time scale $\tau = \epsilon t$ with re-denote $z\left(\tau\right)$ (see argument in \eqref{CAslow}). We follow the steps of the preceding proof, that are verifying the conditions, and using Expansion Theorem \ref{thm5} once again (note that $v\left(\tau\right) \to 0$ asymptotically), we have that
	\begin{equation*}
	\sum_{i=1}^{N}\left|I^*z_i(\tau) - I_i\left(\dfrac{\tau}{\epsilon} \right)\right| 
	+ \sum_{i=1}^{N}\left|T^*z_i(\tau) - J_i\left(\dfrac{\tau}{\epsilon} \right)\right| = O\left(\epsilon\right),
	\end{equation*}
	for all $\tau_0 \leq \tau \leq T$, where $\left(I_i, J_i\right)_{i=1,\dots,N}$ verify \eqref{2.1} and $(z_1, \dots, z_N)$ is the solution of \eqref{CAslow}.\\ \\
	Combining two parts, we have the conclusion for this lemma.
\end{proof}
By two lemmas \ref{lmm11} and \ref{lmm12}, we have that 
\begin{equation} \label{3.39}
\left|S\left(\dfrac{\tau}{\epsilon} \right) - S^*\right| 
+ \sum_{i=1}^{N}\left|I^*z_i(\tau) - I_i\left(\dfrac{\tau}{\epsilon} \right)\right| + \sum_{i=1}^{N}\left|T^*z_i(\tau) - J_i\left(\dfrac{\tau}{\epsilon} \right)\right| \leq \epsilon C_T ,
\end{equation}
for all $\tau_0 \leq \tau \leq T$, where $\left(S, I_i, J_i\right)_{i = 1,\dots,N}$ verifies \eqref{2.8} and $(z_1, \dots, z_N)$ is the solution of \eqref{CAslow_slowpart}.\\ \\
Finally, we will find an approximation of $I_{ij}$, $1 \leq i \leq N$ and estimate the error. Indeed, according to \eqref{3.39}, we substitute $I_i\left(t\right)$ by $I^*z_i\left(\tau\right) + O(\epsilon)$  and $J_j\left(t\right)$ by $T^*z_j\left(\tau\right)$ in all of the equations for $I_{ij}\left(t\right)$, $1 \leq i,j \leq N$ we have the equations
\begin{equation}
\dfrac{dI_{ij}\left(t\right)}{dt} = \beta_i k_{ij}(I^*z_i\left(\tau\right) + O(\epsilon))(T^*z_j\left(\tau\right) + O(\epsilon)) - m_{ij}I_{ij}\left(t\right), \qquad 1 \leq i,j \leq N,
\end{equation}
which  becomes
\begin{equation}
\dfrac{dI_{ij}\left(t\right)}{dt} = - mI_{ij} + \beta k I^*T^*z_i\left(\tau\right) z_j\left(\tau\right) + O(\epsilon), \qquad 1 \leq i,j \leq N.
\end{equation}
Now we formulate and prove the result for approximations of $I_{ij}$, $1 \leq i,j \leq N$, then deduce the approximation and error estimate for the whole initial system \eqref{2.1}.
\begin{lmm}\label{lmm12}
	Under our assumptions, for any initial values of \eqref{2.1}, there exists $\tau_0 > 0$ and initial value $\mathbf{z}\left(\tau_0\right)$ of \eqref{CAslow_slowpart}, such that for any $T>\tau_0$, there is $\epsilon_0 > 0$ and $C_T > 0$ satisfies $\forall \epsilon < \epsilon_0$
	\begin{equation} \label{3.42}
	 \sum_{i,j = 1}^{N}\left|I_{ij}\left(\dfrac{\tau}{\epsilon}\right)  - k\dfrac{I^*T^*}{S^*}z_i\left(\tau\right)z_j\left(\tau\right)\right| \leq \epsilon C_T ,
	\end{equation}
	for all $\tau_0 \leq \tau \leq T$, $(I_{ij})_{1 \leq i,j \leq N}$ satisfying \eqref{2.1} and $(z_1, \dots, z_N)$ is the solution of reduced system \eqref{CAslow_slowpart}.
\end{lmm}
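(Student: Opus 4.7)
The plan builds on Lemma \ref{thm6}, which already delivers $|I_i(\tau/\epsilon)-I^* z_i(\tau)|$ and $|J_j(\tau/\epsilon)-T^* z_j(\tau)|$ of order $\epsilon$ uniformly on $[\tau_0, T]$. The key observation is that, in the fast time $t$, the equation for $I_{ij}$ in \eqref{2.1} is \emph{linear} in $I_{ij}$ with forcing $(k+\epsilon\alpha_{ij})(1+\epsilon b_j)\beta I_i J_j$; once this forcing is replaced by its slow-manifold surrogate, we are left with a scalar linear ODE with constant decay rate $m$ and a slowly varying inhomogeneity, whose quasi-static equilibrium is exactly $k\frac{I^* T^*}{S^*}z_i(\tau)z_j(\tau)$. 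This is the natural candidate approximation, and it coincides with what one obtains by setting $\dot I_{ij}=0$ in the neutral system, see \eqref{2.19}.

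Concretely, substituting the estimates from Lemma \ref{thm6} into
\[
\frac{dI_{ij}}{dt} = (k+\epsilon\alpha_{ij})\beta(1+\epsilon b_j) I_i J_j - (m+\epsilon\gamma u_{ij}) I_{ij}
\]
gives, for $t\in[\tau_0^{(6)}/\epsilon,\, T/\epsilon]$ (where $\tau_0^{(6)}$ denotes the $\tau_0$ of Lemma \ref{thm6}),
\[
\frac{dI_{ij}}{dt} = \beta k I^* T^*\, z_i(\epsilon t)\, z_j(\epsilon t) - m\, I_{ij} + R_{ij}(t,\epsilon),\qquad |R_{ij}(t,\epsilon)|\leq C\epsilon,
\]
using that $z_i$ and the perturbation coefficients are bounded. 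Introduce the ansatz
\[
\tilde I_{ij}(t) = \frac{\beta k I^* T^*}{m}\, z_i(\epsilon t)\, z_j(\epsilon t) = k\frac{I^* T^*}{S^*}\, z_i(\epsilon t)\, z_j(\epsilon t).
\]
By the chain rule and the boundedness of $\dot z_i = F_{z_i}$ on $[0,T]$, one has $\dot{\tilde I}_{ij}=O(\epsilon)$. Setting $W_{ij} = I_{ij}-\tilde I_{ij}$ and using $m\tilde I_{ij}=\beta k I^* T^* z_i z_j$, the cancellation produces the scalar linear ODE
\[
\frac{dW_{ij}}{dt} = -m\, W_{ij} + \widetilde R_{ij}(t,\epsilon),\qquad |\widetilde R_{ij}(t,\epsilon)|\leq C'\epsilon.
\]

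Applying variation of constants from the reference time $t_1 = \tau_0^{(6)}/\epsilon$ gives
\[
|W_{ij}(t)| \leq |W_{ij}(t_1)|\, e^{-m(t-t_1)} + \frac{C'\epsilon}{m}.
\]
Since $I_{ij}$ and $\tilde I_{ij}$ are both bounded by $1$ thanks to conservation of total mass, $|W_{ij}(t_1)|\leq 2$. Choosing the target $\tau_0$ strictly larger than $\tau_0^{(6)}$, for any $\tau\in[\tau_0,T]$ the exponential factor satisfies $e^{-m(\tau-\tau_0^{(6)})/\epsilon}$, which is super-exponentially small in $1/\epsilon$ and therefore absorbed into $O(\epsilon)$ once $\epsilon$ is small enough. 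Summing the $N^2$ contributions and relabeling the constant yields \eqref{3.42}.

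The main subtlety, in my view, is the interplay between the initial boundary layer where Lemma \ref{thm6} fails and the exponential relaxation of $W_{ij}$: the decay rate $m>0$ is $\epsilon$-independent while the time available before $\tau_0$ in fast time is of order $1/\epsilon$, so the contraction factor easily dominates the $O(1)$ initial mismatch. Everything else is a direct Gronwall-type estimate exploiting the linearity of the $I_{ij}$ equation and the fact that its quasi-equilibrium is algebraically determined by the slow variables $z_i$ once the fast variables $v_i$ have relaxed.
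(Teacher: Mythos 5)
Your proposal is correct and rests on the same core idea as the paper's proof: the $I_{ij}$ equation is linear with decay rate $m$, its forcing is controlled by Lemma \ref{thm6}, and $I_{ij}$ therefore relaxes to the quasi-static equilibrium $k\frac{I^*T^*}{S^*}z_iz_j$ up to $O(\epsilon)$. The execution differs in one structural respect. The paper proceeds in two steps: it first introduces the auxiliary solution $I^r_{ij}$ of the exactly forced equation $\frac{dI^r_{ij}}{dt}=-mI^r_{ij}+\beta k I^*T^* z_i(\epsilon t)z_j(\epsilon t)$, shows $I_{ij}-I^r_{ij}=O(\epsilon)$ via Theorem \ref{thm5}, and then argues from the explicit integral formula that $I^r_{ij}$ is within $O(\epsilon)$ of the quasi-static limit because $z_i(\epsilon t)$ can be ``regarded as invariant'' on the fast scale. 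You instead compare $I_{ij}$ directly with the quasi-static ansatz $\tilde I_{ij}$, observe $\dot{\tilde I}_{ij}=O(\epsilon)$ by the chain rule, and close the estimate with a single variation-of-constants bound in which the $O(1)$ mismatch at the entry time $\tau_0^{(6)}/\epsilon$ is killed by the factor $e^{-m(\tau-\tau_0^{(6)})/\epsilon}$. Your one-step argument is self-contained and makes the treatment of the initial layer and of the slow variation of $z_i$ fully explicit, where the paper's final step is somewhat informal; the paper's route, by contrast, reuses the expansion theorem already set up for the rest of Section \ref{sec3.5}. Both are valid; your identity $\frac{\beta k I^*T^*}{m}=k\frac{I^*T^*}{S^*}$ (from $S^*=m/\beta$) checks out, and the only cosmetic slip is the claim that $\tilde I_{ij}\leq 1$ ``by conservation of mass'' --- it is bounded because $k\frac{I^*T^*}{S^*}=D^*\leq 1$, but any fixed constant bound would do.
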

\begin{proof}
	Assume $\left(I^r_{ij}\right)_{1 \leq i,j \leq N}$ to be the solution of 
	\begin{equation} \label{3.43}
	\dfrac{dI_{ij}\left(t\right)}{dt} = - mI_{ij}\left(t\right) + \beta k I^*T^*z_i\left(\epsilon t\right) z_j\left(\epsilon t\right),
	\end{equation}
	$1 \leq i,j \leq N$. Then, for each $\tau_0>0$ and $T>\tau_0$, after the changing time scale $\tau = \epsilon t$, we have $\sum\limits_{i,j = 1}^{N} \left|I_{ij}\left(\dfrac{\tau}{\epsilon}\right) - I^r_{ij}\left(\dfrac{\tau}{\epsilon}\right) \right| = O(\epsilon)$ for any $\tau \in [\tau_0,T ]$. Indeed, from \eqref{2.1} and \eqref{3.43}, we have that
	\begin{equation}
	\begin{aligned}
	&\dfrac{dI_{ij}}{dt}\left(\dfrac{\tau}{\epsilon} \right) &=& -m_{ij} I_{ij}\left(\dfrac{\tau}{\epsilon} \right) + \beta_j k_{ij} I_i\left(\dfrac{\tau}{\epsilon} \right) J_j\left(\dfrac{\tau}{\epsilon} \right) \\
	&\dfrac{dI^r_{ij}}{dt}\left(\dfrac{\tau}{\epsilon} \right) &=& -m I^r_{ij}\left(\dfrac{\tau}{\epsilon} \right) + \beta k I^* T^* z_i\left(\tau\right) z_j\left(\tau\right)
	\end{aligned}
	\end{equation}
	which implies
	\begin{equation}
	\dfrac{d}{dt}\left(I_{ij}\left(\dfrac{\tau}{\epsilon} \right) - I^r_{ij}\left(\dfrac{\tau}{\epsilon} \right)  \right)= -m\left(I_{ij}\left(\dfrac{\tau}{\epsilon}\right) - I^r_{ij}\left(\dfrac{\tau}{\epsilon} \right) \right) 
	- \epsilon\gamma u_{ij} I_{ij}\left(\dfrac{\tau}{\epsilon}\right)
	+ \left(\beta_jk_{ij}I_i\left(\dfrac{\tau}{\epsilon} \right) J_j\left(\dfrac{\tau}{\epsilon} \right) - \beta k I^* T^* z_i\left(\tau\right) z_j\left(\tau\right)\right).
	\end{equation}
	By lemma \ref{thm6}, we have that $\left|\beta_jk_{ij} I_i\left(\dfrac{s}{\epsilon}\right) J_j\left(\dfrac{s}{\epsilon}\right) - \beta k I^* T^* z_i(s) z_j(s)\right| = O(\epsilon)$ uniformly for $s \in [\tau_0,T]$. It is trivial to note that, since $\left|I_{ij}\right| \leq 1$, $\epsilon\gamma u_{ij} \left|I_{ij}\left(\dfrac{\tau}{\epsilon}\right)\right| = O\left(\epsilon\right)$. 
	Then, for all $1 \leq i,j \leq N$, using the expansion theorem- Theorem \ref{thm5}, we observe that
	\begin{equation}
	\left| I_{ij}\left(\dfrac{\tau}{\epsilon}\right) - I^r_{ij}\left(\dfrac{\tau}{\epsilon}\right) \right| = O(\epsilon).
	\end{equation}
	We then compute the solution $\left(I^r_{ij}\right)_{1\leq i,j \leq N}$ of \eqref{3.43} to be
	\begin{equation}
	I^r_{ij}\left(t\right) = e^{-mt}\left(\beta k I^*T^*\int_{0}^{t}e^{ms}z_i\left(\epsilon s\right)z_j\left(\epsilon s\right)ds + C\right), \qquad C\in \mathbb{R}.
	\end{equation}
	For any fixed time $T$ and $\tau_0 \leq t \leq T$, when $\epsilon \to 0$ we can regard $z_i\left(\epsilon t\right)$ invariant. Hence, for all $1 \leq i,j \leq N$, we have $\left|I^r_{ij}\left(t\right) - k\dfrac{I^*T^*}{S^*}z_i\left(\epsilon t\right)z_j\left(\epsilon t\right)\right| = O\left(\epsilon\right)$, which implies $\left|I_{ij}\left(\dfrac{\tau}{\epsilon} \right) - k\dfrac{I^*T^*}{S^*}z_i\left(\tau\right)z_j\left(\tau\right)\right| = O\left(\epsilon\right)$.
\end{proof}
Combining Lemmas \ref{lmm11},\ref{thm6} and \ref{lmm12}, we have the Lemma \ref{lmm_error}.\\ \\
Thanks to this section, we now have the main result for the error estimate, that allows us to approximate the solution of the original system \eqref{2.1} using the solution of slow system \eqref{2.47}. The original system \eqref{2.1} now formally reduces to the slow system (replicator system) \eqref{2.47}, which leads to many advantages in analysis and prediction. The massive number of equations in \eqref{2.1} now decreases from $N^2 + N + 1$  to $N$ equations of \eqref{2.47}, which helps in computation and time. Thus, we may not need to compute the whole original model \eqref{2.1} to make prediction but only the replicator equations \eqref{2.47}. The main result in section \ref{sec2.5} also has biological meaning, when the coefficients of slow system \eqref{2.47} are pairwise invasion fitness, giving information about survival outcome of 2-strain system as in table \ref{tab:table2}. Furthermore, $\lambda_{i}^j$'s give us the meaning and effects of each trait perturbation on the system and its long time behavior, which can not be seen directly in the \eqref{2.1}.
\section{Illustrations of the model and interpretations}\label{sec5}

In this section, we present some results and simulations about survival outcome of model based on the replicator system \eqref{2.47}. Initially, we recall the definition of basic reproduction ratio of strain $i$ that is the expected number of secondary cases produced by a single (typical) infection of strain $i$ in a completely susceptible population and computed by $R_{0,i} = \dfrac{\beta_i}{m_i}$. If there is only variation in transmission rates among strains, then $R_{0,i}$'s fully determine the unique winner in the system. Yet, in cases of variation in transmission and clearance rates, it can be shown that $R_{0,i}$'s alone do not determine the survival outcome anymore because of the feedbacks induced by persistence in the coinfection compartment. These phenomena are illustrated in proofs and numerical simulations as follows in this section.
\subsection{Competitive exclusion due to variation in transmission and infection clearance rates only, $\mathcal{A}=\{1\}$, $\mathcal{A} = \{2\}$, and $\mathcal{A} = \{1,2\}$}\label{sec4.1}
\subsubsection{Variation in transmission rate or infection clearance rates $\mathcal{A}=\{1\}$ or $\mathcal{A} = \{2\}$}\label{sec:A1}
Now, we show the competitive exclusion principle in this case $C_{\mathcal{A}}$ with $\mathcal{A}=\{1\}$. In these cases, the competitive exclusion principle holds: the species with the largest $R_{0,i}$ is the only survivor.
\begin{thm} \label{thm9}
	Assume that $\mathcal{A} = \{1\}$ and $b_1 > b_2 \geq \dots \geq b_N$. Then $E_1 = (1,0,\dots,0)$ is globally stable in $(0,1) \times [0,1]^{N-1} \cap \{u \in \mathbb{R}^N: \sum_{i=1}^{N}z_i =1 \}$.
\end{thm}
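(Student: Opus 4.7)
Under $\mathcal{A}=\{1\}$ the slow system \eqref{2.47} reduces to the replicator equation with linear fitness
\begin{equation*}
\dot z_i = \Theta_1\, z_i\bigl(b_i - \bar b(\mathbf{z})\bigr),\qquad \bar b(\mathbf{z}):=\sum_{j=1}^{N}b_j z_j,\qquad 1\le i\le N,
\end{equation*}
restricted to the simplex $\Delta=\{\mathbf{z}\in[0,1]^N:\sum_i z_i=1\}$. First I would verify that $\Delta$ is forward invariant (the sum $\sum_i \dot z_i = \Theta_1(\bar b-\bar b\cdot 1)=0$, and each face $\{z_i=0\}$ is invariant because $\dot z_i=0$ there), and that the stratum $\{z_1>0\}\cap\Delta$ is forward invariant as well: writing
\begin{equation*}
\frac{d}{d\tau}\log z_1 \;=\; \Theta_1\bigl(b_1-\bar b(\mathbf{z})\bigr)\;=\;\Theta_1\sum_{i=2}^{N}(b_1-b_i)\,z_i \;\ge\; 0
\end{equation*}
since $b_1=\max_j b_j$ by hypothesis, so $z_1(\tau)\ge z_1(0)>0$ for all $\tau\ge 0$.

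Next I would introduce the Lyapunov function $V(\mathbf{z})=-\log z_1$, which is continuous and nonnegative on $\{z_1>0\}\cap\Delta$, vanishes only at $E_1$, and tends to $+\infty$ as $z_1\to 0$. The computation above yields
\begin{equation*}
\dot V \;=\; -\Theta_1\sum_{i=2}^{N}(b_1-b_i)\,z_i \;\le\; 0,
\end{equation*}
with equality if and only if $z_i=0$ for every $i\ge 2$ (thanks to the strict inequalities $b_1>b_i$ for $i\ge 2$ coming from $b_1>b_2$ combined with $b_2\ge b_3\ge\cdots\ge b_N$), i.e. if and only if $\mathbf{z}=E_1$. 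Lyapunov stability of $E_1$ then follows from the standard argument, and LaSalle's invariance principle gives global asymptotic stability on $(0,1)\times[0,1]^{N-1}\cap\Delta$, since the largest invariant set inside $\{\dot V=0\}$ is $\{E_1\}$.

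A more quantitative (and in this linear-fitness setting, very clean) alternative to LaSalle is the ratio trick: for each $i\ge 2$,
\begin{equation*}
\frac{d}{d\tau}\log\frac{z_i}{z_1} \;=\; \Theta_1(b_i-b_1) \;<\;0,
\end{equation*}
so $z_i/z_1$ decays exponentially to $0$ at rate $\Theta_1(b_1-b_i)>0$; combined with $\sum_i z_i=1$ this forces $z_1(\tau)\to 1$ and $z_i(\tau)\to 0$ for $i\ge 2$, with explicit exponential rates. I do not foresee a real technical obstacle here: the only subtle point is the domain convention $z_1(0)>0$ (built into the statement), without which $z_1(t)\equiv 0$ and $E_1$ cannot be reached. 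Note also that only the strict inequality $b_1>b_i$ for every $i\ge 2$ is used; the ordering $b_2\ge\cdots\ge b_N$ plays no further role.
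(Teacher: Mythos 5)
Your proof is correct and follows essentially the same route as the paper: the same Lyapunov function $V(\mathbf{z})=-\ln z_1$, the same computation $\dot V=-\Theta_1\sum_{j}(b_1-b_j)z_j\leq 0$ with equality only at $E_1$, and the same appeal to LaSalle's invariance principle (your extra care with forward invariance of the simplex and of the stratum $\{z_1>0\}$ only makes the argument tighter). The ratio argument $\frac{d}{d\tau}\log(z_i/z_1)=\Theta_1(b_i-b_1)<0$ is a nice self-contained alternative giving explicit exponential rates, which the paper does not include, but it is not needed for the statement.
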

This result means that, the strain with the largest basic reproduction number is the best competitor. However, in general, this fact does not always occurs, which we will illustrate in later subsection.
\begin{proof}
	For simplicity, denote $D = (0,1) \times [0,1]^{n-1} \cap \{u \in \mathbb{R}^N: \sum_{i=1}^{N}z_i =1 \}$.\\	
	We aim to use LaSalle's invariant principle. Consider $V(u) = -\ln z_1$. Since we are consider the coexistence in $D$, then
	\begin{equation}
	\begin{aligned}
	\dfrac{dV(u)}{d\tau} &= -\Theta_1\left(b_1 - \sum_{j=1}^{N}b_jz_j \right) = -\Theta_1\left(b_1 \sum\limits_{j=1}^{N}z_j -\sum\limits_{j=1}^{N}b_jz_j \right) = -\Theta_1\sum\limits_{j=1}^{N}(b_1 - b_j)z_j.
	\end{aligned}
	\end{equation}
	It's straightforward that $V(u) > 0$ because $0 < z_1 < 1$ in $D$. Because of the assumption $b_1 = \max\{b_i; 1 \leq i \leq N\}$ then $b_1 - b_j$ must be positive for all $j\neq 1$. Recall that $\Theta_1 > 0$ then, $\dfrac{dV(u)}{d\tau} \leq 0$. We have that
	\begin{equation}
	\dfrac{dV(u)}{d\tau} = 0 \Leftrightarrow (b_1 - b_j)z_j = 0, \quad \forall j \quad \Leftrightarrow
	\left\{
	\begin{aligned}
	&z_j &=& 1, \quad &j=1&\\
	&z_j &=& 0, \quad &j \neq 1&.
	\end{aligned}
	\right.
	\end{equation}
	Thus, $V(u)$ is a Lyapunov function associated to $u\left(\tau\right)$. Applying LaSalle's invariant principle, we obtain our solution $u$ tends to $E_1$ asymptotically.
\end{proof}
Analogously, we have a similar result for $\mathcal{A} = \{2\}$, that states that, the strain with smallest single infection clearance rate (longest duration of carriage) is the unique survivor.
\begin{thm}
	Assume that $\mathcal{A} = \{2\}$ and $\nu_1 < \nu_2 \leq \dots \leq \nu_N$. Then $E_1 = (1,0,\dots,0)$ is globally stable in $(0,1) \times [0,1]^{N-1} \cap \{u \in \mathbb{R}^N: \sum_{i=1}^{N}z_i =1 \}$.
\end{thm}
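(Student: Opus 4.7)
The plan is to mirror exactly the strategy used in Theorem \ref{thm9} for $\mathcal{A}=\{1\}$, adapted to the sign change coming from the fact that $\nu_i$ enters the slow system \eqref{3.17} with a minus sign compared to $b_i$ in \eqref{3.7}. I would invoke LaSalle's invariant principle on the simplex
$$D=(0,1)\times [0,1]^{N-1}\cap\Bigl\{u\in\R^N:\sum_{i=1}^N z_i=1\Bigr\},$$
which is forward-invariant for the replicator system \eqref{3.17} (since $\dot z_i=0$ whenever $z_i=0$ and the hyperplane $\sum z_i=1$ is preserved).

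First, I would take the candidate Lyapunov function $V(\mathbf{z})=-\ln z_1$, which is well-defined and nonnegative on $D$ because $0<z_1\le 1$. Differentiating along trajectories of \eqref{3.17} with $i=1$ gives
$$\frac{dV}{d\tau}=-\frac{1}{z_1}\frac{dz_1}{d\tau}=\Theta_2\Bigl(\nu_1-\sum_{j=1}^N\nu_j z_j\Bigr).$$
Using the constraint $\sum_{j=1}^N z_j=1$, I rewrite this as
$$\frac{dV}{d\tau}=\Theta_2\sum_{j=1}^N(\nu_1-\nu_j)z_j.$$
Since $\Theta_2>0$ and $\nu_1\le\nu_j$ for all $j$ by hypothesis, each summand is nonpositive, so $\dfrac{dV}{d\tau}\le 0$ throughout $D$.

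Next, I would identify the largest invariant subset where $\dfrac{dV}{d\tau}=0$. Because the inequalities $\nu_1<\nu_j$ are strict for $j\ne 1$, vanishing of the sum forces $z_j=0$ for every $j\ne 1$, and then the simplex constraint yields $z_1=1$. Hence the only invariant set on which $V$ is constant is the singleton $\{E_1\}$. LaSalle's invariance principle then gives $\mathbf{z}(\tau)\to E_1$ as $\tau\to\infty$ for every initial condition in $D$, which is the claimed global stability.

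I do not expect any real obstacle: the calculation is a direct transcription of the proof for $\mathcal{A}=\{1\}$, the only substantive change being a sign flip in the Lyapunov derivative that is absorbed by the hypothesis $\nu_1<\nu_j$ (vs.\ $b_1>b_j$). The one point meriting a brief check is forward invariance of $D$ (in particular that $z_1>0$ is preserved, so that $V=-\ln z_1$ remains finite); this is immediate from the replicator form $\dot z_i=z_i(\cdots)$, which keeps each coordinate in $[0,1]$ and prevents $z_1$ from reaching $0$ in finite time.
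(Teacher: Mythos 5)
Your proof is correct and is exactly the argument the paper intends: the paper omits the proof for $\mathcal{A}=\{2\}$, stating only that it uses the same LaSalle/Lyapunov argument with $V=-\ln z_1$ as in Theorem \ref{thm9}, and your adaptation (with the sign flip absorbed by the hypothesis $\nu_1<\nu_j$) is precisely that.
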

The proof for this result uses the same argument in the theorem \ref{thm9} so we do not present it.
\subsubsection{Variation in transmission and single infection clearance rates, $\mathcal{A}=\{1,2\}$}\label{sec4.2}
In this subsection, it is shown that $R_{0,i}$'s do not determine the unique survivor anymore when $\mathcal{A} = \{1,2\}$ by constructing a counterexample. Firstly, we need an auxiliary lemma. With system $C_{\mathcal{A}}$ with $\mathcal{A}=\{1,2\}$, we try to make a result similar to Theorem \ref{thm9} about the longtime scenarios of competition for 
\begin{equation} \label{4.3}
\left\{
\begin{aligned}
&\dfrac{dz_i}{d\tau} = \Theta_1z_i\left(b_i - \sum_{i=1}^{N}b_jz_j\right) + \Theta_2z_i\left(- \nu_i + \sum_{i=1}^{N}\nu_jz_j\right)\\
&z_1 + z_2 + \dots + z_N = 1.
\end{aligned}
\right.
\end{equation}
Recalling that  $\Theta_1,\Theta_2> 0$ by definitions, we can prove the following theorem stating that the competitive exclusion occurs again but depends on the parameters of the neutral model though  the quantity $\Theta_1b_j - \Theta_2\nu_j$ which characterizes the unique survivor.
Note that this 
\begin{thm} \label{thm11}
	Assume in \eqref{4.3} with $N$ strains, there exists a strain, namely $1$, satisfies $\Theta_1b_1 - \Theta_2\nu_1 = \max\limits_{1 \leq j \leq N}\{\Theta_1b_j - \Theta_2\nu_j \}$. Then $E_1 = (1,0,\dots,0)$ is globally stable in $D = (0,1) \times [0,1]^{N-1} \cap \{u \in \mathbb{R}^N : \sum_{i=1}^{N}z_i =1 \}$.
\end{thm}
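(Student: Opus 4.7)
The plan is to copy the Lyapunov/LaSalle strategy of Theorem \ref{thm9} after absorbing both perturbations into a single effective fitness. Since $\sum_j z_j = 1$ is preserved by \eqref{4.3} (sum the equations; the right-hand side vanishes identically), we may rewrite the system in reduced form on the simplex. Setting $c_i := \Theta_1 b_i - \Theta_2\nu_i$, the $i$-th equation of \eqref{4.3} becomes
\begin{equation*}
\dfrac{dz_i}{d\tau} = z_i\!\left(c_i - \sum_{j=1}^N c_j z_j\right), \qquad 1\leq i\leq N,
\end{equation*}
which is exactly a replicator equation with linear fitness vector $(c_1,\dots,c_N)$. The hypothesis is $c_1 = \max_j c_j$, and (as in Theorem \ref{thm9}, where $b_1>b_2\geq\cdots$ is used) one reads this as $c_1 > c_j$ for $j\neq 1$; I will state this strict version since the proof of exclusion needs it.

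Next I verify that $D = (0,1)\times[0,1]^{N-1}\cap\{\sum z_i = 1\}$ is positively invariant: on any face $z_i = 0$ the $i$-th equation gives $\dot z_i = 0$, so no trajectory starting with $z_1(0)>0$ can reach $z_1 = 0$ in finite time, and the simplex constraint is preserved. Then I introduce the candidate Lyapunov function
\begin{equation*}
V(z) = -\ln z_1,
\end{equation*}
which is $C^1$ and nonnegative on $D$. Differentiating along trajectories and using $\sum_j z_j = 1$,
\begin{equation*}
\dfrac{dV}{d\tau} = -\left(c_1 - \sum_{j=1}^N c_j z_j\right) = -\sum_{j=1}^N (c_1 - c_j)\, z_j \leq 0,
\end{equation*}
with equality iff $(c_1 - c_j)z_j = 0$ for every $j$, i.e.\ (under the strict max) iff $z_j = 0$ for all $j\neq 1$. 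Thus the largest invariant subset of $\{\dot V = 0\}\cap\overline{D}$ is the singleton $\{E_1\}$, and LaSalle's invariance principle yields $z(\tau)\to E_1$ for every initial condition in $D$.

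I do not expect any serious obstacle: the proof is a direct transcription of Theorem \ref{thm9} once one notices that the two perturbation modes combine linearly into the effective coefficient $c_i$. The only subtle point is the strictness of the maximum: if the hypothesis is read non-strictly (i.e.\ some $c_j = c_1$ for $j\neq 1$), then $E_1$ is not globally attractive but only the face $\{z_i = 0 : c_i < c_1\}$ is, and one would need an additional Lyapunov argument (or a direct analysis of the reduced replicator restricted to that face) to conclude; this is analogous to the tacit strictness already used in Section \ref{sec:A1}, and I would state the hypothesis as $c_1 > c_j$ for all $j\neq 1$ to mirror the earlier convention.
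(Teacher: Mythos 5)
Your proof is correct and follows essentially the same route as the paper: both define the effective fitness $\Theta_1 b_i - \Theta_2\nu_i$, use the Lyapunov function $V=-\ln z_1$ on the invariant simplex, and conclude by LaSalle's invariance principle. Your remark that the maximum must be read as strict for $E_1$ to be the unique limit is a fair point that the paper leaves implicit.
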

\begin{proof}
	Analogously to the earlier result in section \ref{sec4.1}, we want to apply LaSalle's invariant principle. Consider the function $V(u) = -\ln z_1$ then by our hypothesis, it's easy to see that $V(u) > 0$ and 
	\begin{equation}
	\begin{aligned}
	\dfrac{dV(u)}{d\tau} = -\sum_{j=1}^{N}\left(\Theta_1b_1 - \Theta_2\nu_1 -\Theta_1b_j +\Theta_2\nu_j\right)z_j
	 \leq 0.
	\end{aligned}
	\end{equation}
	Hence, $V(u)$ is an association Lyapunov function. The equation $\dfrac{dV(u)}{d\tau} = 0$ is equivalent to
	\begin{equation}
	\left\{
	\begin{aligned}
	&\left[\left(\Theta_1b_i - \Theta_2\nu_i\right) - \left(\Theta_1b_j - \Theta_2\nu_j\right)\right]z_j  &=& 0, \qquad 1 \leq j \leq N \\
	&z_1 + z_2 + \dots + z_N &=& 0
	\end{aligned}
	\right.
	\end{equation}
	which is equivalent to $(z_1,z_2,\dots,z_N) = (1,0,\dots,0)$. By LaSalle's invariant, $E_1$ is globally stable in $D$.
\end{proof}
We next come to see how this result is used in the forthcoming examples. We then compare the results with relations of $R_{0,i}$ to see how basic reproduction numbers affect the final competitive outcomes. Firstly, with the perturbations existing in clearance rates, the $R_{0,i}$ now becomes 
\begin{equation}
R_{0,i} = \dfrac{\beta_i}{m_i} = \dfrac{\beta + \epsilon  b_i}{m + \epsilon  \nu_i} = \dfrac{\beta}{m}(1 + \dfrac{\epsilon}{\beta} b_i)\left(1 -  \dfrac{\epsilon}{m}\nu_i\right) + O(\epsilon^2),
\end{equation}
which is equivalent to $R_{0,i} = \dfrac{\beta}{m} + \epsilon \dfrac{\beta}{m}\left(\dfrac{b_i}{\beta} - \dfrac{\nu_i}{m} \right) +O(\epsilon^2)$.\\ 
Hence, note that $R_0 = \dfrac{\beta}{m}$ we have that $R_{0,i} \leq R_{0,j}$ if and only if $b_i- b_j \leq R_0\left(\nu_i - \nu_j\right)$ when $\epsilon \to 0$.
\begin{exmp} \label{exA2}
Consider the system \eqref{4.3}.\\
	Initially, we can directly apply Lemma \ref{thm11} and infer that the strain, called $1$, satisfying $\Theta_1b_1 - \Theta_2\nu_1 = \max\limits_{1 \leq j \leq N}\{\Theta_1b_j - \Theta_2\nu_j \}$ will be the winner.\\ \\
	Yet, unlikely to such result in section \ref{sec4.1}, according to the explicit calculation on $R_{0,i}$, we can construct so that this strain 1 may not have the biggest basic reproduction number. Indeed, $\Theta_1b_i - \Theta_2\nu_i \geq \Theta_1b_j - \Theta_2\nu_j $ is equivalent to
	\begin{equation}
	b_i - b_j  \geq  \dfrac{1}{2}\dfrac{\gamma}{\beta}R_0\dfrac{1}{1+ k\left(R_0 - 1\right)}\left(\dfrac{1}{1+ k\left(R_0 - 1\right)} +1 \right)\left(\nu_i - \nu_j\right).
	\end{equation}
	We can choose $b_i$, $\nu_i$, $1 \leq i \leq n$ and $\gamma>0$, $r>0$, $k>0$ and $R_0>1$  such that for $j \neq 1$,
	\begin{equation}
	\left\{
	\begin{aligned}
	&b_1 - b_j  \geq  \dfrac{1}{2}\dfrac{\gamma}{\beta}R_0\dfrac{1}{1+ k\left(R_0 - 1\right)}\left(\dfrac{1}{1+ k\left(R_0 - 1\right)} +1 \right)\left(\nu_1 - \nu_j\right),\\
	&b_1 - b_j  \leq  R_0 \left(\nu_1 - \nu_j\right),
	\end{aligned}
	\right.
	\end{equation}
	then strain $1$ has  $\Theta_1b_1 - \Theta_2\nu_1 = \max\limits_{1 \leq j \leq N}\{\Theta_1b_j - \Theta_2\nu_j \}$ and $R_{0,1} = \min\limits_{1\leq i\leq N}\{R_{0,i}\}$.\\
	It is possible because $$ \dfrac{1}{2}\dfrac{\gamma}{\beta}R_0\dfrac{1}{1+ k\left(R_0 - 1\right)}\left(\dfrac{1}{1+ k\left(R_0 - 1\right)} +1 \right)  < R_0$$ 
	and we can pick, for instance, $\nu_1 = \max\limits_{1 \leq j \leq N}\{\nu_j\}$, then easily find satisfactory $b_i,\nu_i$.\\
	This example shows us that, even a strain $i$ with smallest basic reproduction number $R_{0,i}$ can be the single competitively exclusive strain if there is variation in both transmission and  clearance rates in a system with co-infection. Explicitly, the strain $1$ is the only survivor but it has the smallest $R_0$.
\end{exmp}
Hence, we can see that, even when there is competitive exclusion, $R_{0,i}$ alone still do not determine the winner if there are perturbations in the transmission rates and clearance rates. More detailed consideration of such effects and interplay between parameters for the 2-strain general system is provided in \cite{thaoEridaSten}. To close this subsection, we present simulations in figure \ref{exclusion} of competitive exclusion to illustrate claims in sections \ref{sec:A1} and \ref{sec4.2}. We choose the 10-strain system and plot frequencies of strains in two cases: perturbation in only transmission rates $\beta_i$; and, perturbation in transmission rates $\beta_i$ and in clearance rates of single colonization $\gamma_{i}$.
\begin{figure}[htb!]
	\centering
	\includegraphics[width= 0.9 \textwidth]{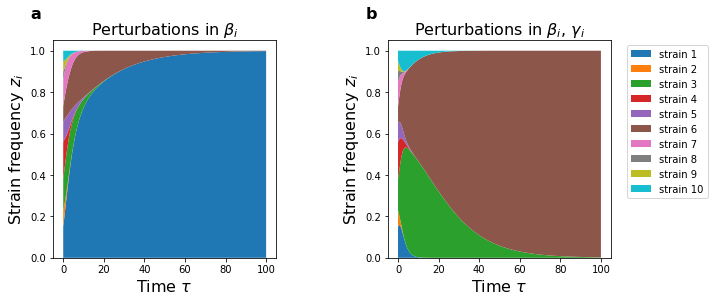}
	\caption{\textbf{Illustration of competitive exclusion dynamics for $N=10$ when strains vary in transmission and clearance rates.} We choose the parameter values of the neutral system  $\beta = 4$, $m = 2$, $\gamma = 1$ and $k = 1.5$. 
The variation of $\beta$ is given by $b = \protect\begin{pmatrix} b_1 & b_2 & \dots & b_N \protect\end{pmatrix}$ and is  
set to be the equal in both cases and equals $b  = \protect\begin{pmatrix} 0.25 & -0.2 & 0.125 & -0.125 & 0.075 & 0.225 & 0.05 & -0.5 & -0.175 & 0 \protect\end{pmatrix}.$ The matrix of $\nu_i$ in \textbf{(b)} is chosen to be $\nu = \protect\begin{pmatrix} 1 & 0.8 & -1.5 & -0.5& 0.3 & -1 & 1.2 & -2 & 0.7 & -2 \protect\end{pmatrix}.$  \textbf{(a)} Strains vary only in transmission rates $\beta_i$: $\mathcal{A}=\{1\}$. \textbf{(b)} Strains vary in transmission and clearance rates $\beta_i, \gamma_i$: $\mathcal{A} = \{1,2\}$. We can see that competitive exclusion is the only outcome in either case. However in \textbf{(a)} the strain with the highest reproduction number will persist while all other strains will go extinct. In contrast, in \textbf{(b)} the coinfection parameters matter, and it is not true that the strain with highest $R_0$ will persist. In this example strain 10 has highest basic reproduction number but strain 6 is the ultimate winner, because of its exact advantage in clearance rate (as explained in Example \ref{exA2}).(\href{https://github.com/lthminhthao/quasineutral_coinfection_N-strains_multiple-variations/blob/master/1_competitive_exclusion.ipynb}{Data \& Codes })}\label{exclusion}
\end{figure}
\subsection{Variation in clearance rate of co-colonization may yield  coexistence}\label{sec4.3}
\subsubsection{Variation in clearance rate of co-colonization only, $\mathcal{A}=\{3\}$}
\label{sec:A3}
In this case, the very first claim about competitive outcomes of the system is that, in contrast to the above cases 
$\mathcal{A}\subset\{1,2\}$, there can be coexistence of strains. Indeed, in this case the system can be rewritten on the form of a replicator system with  a symmetric matrices for which several results exists (see in particular \cite{Hofbauer}). In particular we have :

\begin{thm}
Let $\mathcal{A}=\{3\}$ which means variation in coinfection clearance rates only. The system \eqref{2.47} may be rewritten as 
$$\begin{cases} \
\dot{z_i}=2\Theta_3\left(\left(\left(-\bar{U}\right) z\right)_i-z^T \left(-\bar{U}\right)z \right),\quad 1\leq i\leq N\\
z_1+\cdots+z_n=1.
\end{cases}$$
where the symmetric matrix $\bar{U}=\frac12\left(U+U^T\right)$ is symmetric part of the matrix of perturbation $U=\left(u_{ij}\right)_{1\leq i,j \leq N}$.\\
In particular, the function $z\mapsto z^T \bar{U}z$ is a strict Lyapunov function and any positive asymptotic equilibria $z^*$ are globally stable. 
\end{thm}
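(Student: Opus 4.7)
The plan is to exploit the symmetric structure of the coefficients and reduce everything to the classical theory of replicator equations with a symmetric payoff matrix. First I would start from equation \eqref{3.25} derived in Section \ref{sec3.3}, namely
$$\dot{z_i} = \Theta_3 z_i \left[ -\sum_{j=1}^N (u_{ij}+u_{ji}) z_j + \sum_{j,l=1}^N (u_{jl}+u_{lj}) z_l z_j \right].$$
Since by definition $u_{ij}+u_{ji}=2\bar{u}_{ij}$, both the linear and the quadratic terms factor immediately through $\bar{U}$, yielding
$$\dot{z_i} = 2\Theta_3 z_i \left[ \bigl((-\bar{U})z\bigr)_i - z^T(-\bar{U})z \right],\qquad i=1,\dots,N,$$
which is the claimed replicator form with symmetric ``payoff'' matrix $-\bar{U}$.

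Second, to verify the Lyapunov property I would compute $\dot{V}$ for $V(z)=z^T\bar{U}z$ directly. Using the symmetry $\bar{U}=\bar{U}^T$,
$$\dot{V} = 2\sum_i (\bar{U}z)_i \dot{z_i} = -4\Theta_3\left[ \sum_{i=1}^N z_i (\bar{U}z)_i^2 - \Bigl(z^T\bar{U}z\Bigr)^2\right].$$
Since $z$ belongs to the simplex $\sum_i z_i=1$ with $z_i\ge 0$, the Cauchy--Schwarz (or Jensen) inequality applied to the probability vector $z$ gives $\sum_i z_i (\bar{U}z)_i^2 \ge \bigl(\sum_i z_i (\bar{U}z)_i\bigr)^2 = (z^T\bar{U}z)^2$, with equality if and only if $(\bar{U}z)_i$ is constant on the support of $z$. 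Hence $\dot{V}\le 0$, and $\dot V =0$ exactly at the equilibria of the replicator equation, proving that $V$ is a strict Lyapunov function.

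Third, for the global stability of a positive equilibrium $z^*$, I would combine the above with LaSalle's invariance principle: every trajectory with $z_i(0)>0$ for all $i$ converges to the largest invariant subset of $\{z:\dot V(z)=0\}$, which consists of the equilibria satisfying $(\bar U z)_i = z^T\bar U z$ on the support of $z$. Then I would invoke the classical result for doubly symmetric replicator systems (see Chapter 7 in \cite{Hofbauer}): an interior equilibrium of a symmetric replicator equation is asymptotically stable if and only if it is a strict local minimum of $V$ on the simplex, and in that case it is globally asymptotically stable on the interior. The statement of the theorem then follows.

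The main obstacle will be the third step: the Lyapunov identity and the Cauchy--Schwarz bound are routine, but passing from ``$V$ decreases'' to ``every positive asymptotically stable equilibrium is globally stable'' requires dealing with the possibly rich equilibrium structure of $\bar U$ on the simplex (boundary equilibria, degenerate critical points, non-uniqueness). The cleanest route is precisely to appeal to the general symmetric-replicator theorem in \cite{Hofbauer} rather than to redo the argument by hand; the role of our computation is to show that the hypotheses of that theorem (symmetry and the Fisher-type monotonicity of $V$) hold in our reduced system.
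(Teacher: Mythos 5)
Your proposal is correct and follows essentially the same route as the paper: rewrite the slow system as a replicator equation with the symmetric payoff matrix $-\bar{U}$ and then invoke the symmetric-replicator theory of Chapter 7 of \cite{Hofbauer} (Theorem 7.8.1) for the Lyapunov/global-stability conclusions. The only differences are minor: you extract the symmetric form directly from \eqref{3.25}, where it is already manifest, whereas the paper starts from the pairwise-fitness representation \eqref{2.48} and uses the column-shift invariance of the replicator flow on the simplex (if $c_{ij}=a_{ij}+x_j$ then $(Cz)_k-z^TCz=(Az)_k-z^TAz$) to discard the $2u_{jj}$ term; and you additionally verify the monotone decrease of $z\mapsto z^T\bar{U}z$ by the explicit Jensen/Cauchy--Schwarz computation, a step the paper delegates entirely to the cited theorem.
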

\begin{proof}
We refer here to the theorem 7.8.1 page 82 of \cite{Hofbauer} for the results about a replicator system with a symmetric matrix $A$.
Then we only have to prove that the system \eqref{2.47} may be rewritten in terms of the symmetric matrix $-\bar{U}$.\\
This comes from the following general fact in the  replicator equation. 
Let $x=(x_j)_{1\leq j\leq N}$ be a vector  and  $A=(a_{ij})_{1\leq i,j\leq N}$ and $C=(c_{ij})_{1\leq i,j\leq N}$ be two $N\times N$ matrix such that $c_{ij}=a_{ij}+x_{j}$.
For every $z=(z_1,\cdots,z_n)$ we have
 $$(C z)_k-z^T Cz 
 = \sum_{j=1}^N c_{kj}z_j- \sum_{i,j}c_{ij}z_jz_i 
 = \sum_{j=1}^N a_{kj}z_j+\sum_{j=1}^N x_{j}z_j- \sum_{i,j}a_{ij}z_jz_i- \sum_{i,j}x_{j}z_jz_i,\quad 1\leq k\leq N.$$
If  $\sum_{i=1}^N z_i=1$ then $\sum_{i,j}x_jz_jz_i=\sum_{j=1}^N x_{j}z_j$ which yields

$$(C z)_k-z^T Cz=(Az)_k-z^TAz,\quad 1\leq k\leq N.$$
The proof follows from the explicit expression of \eqref{2.47} when $\mathcal{A}=\{3\}$ and by taking
$a_{ij}=\frac12 \lambda_i^j= u_{jj}-\frac12\left(u_{ij}+u_{ji}\right)$, $x_j=-u_{jj}$ and $c_{ij}= -\frac12\left(u_{ij}+u_{ji}\right)$.
\end{proof}

\begin{figure}[htb!]
	\centering
	\captionsetup{singlelinecheck=off}
	\includegraphics[width = 1.1 \textwidth]{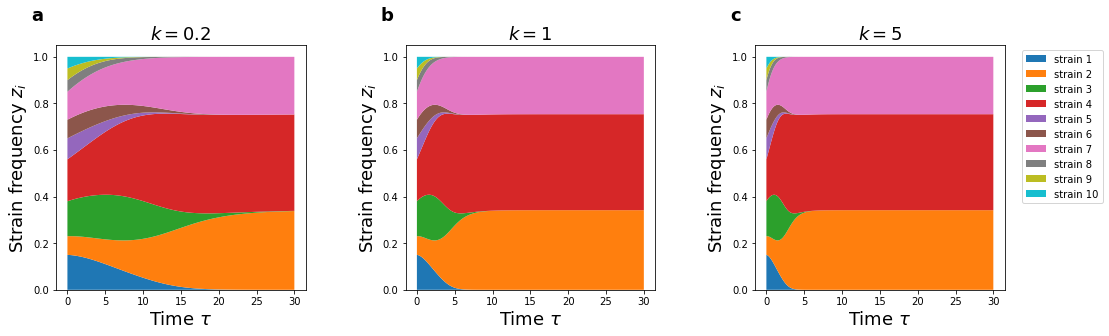}
	\caption[]{\textbf{Strain coexistence is possible when there is variation in coinfection clearance rate and the speed of the dynamics depends on the parameters of the neutral model.} 
	Here, we illustrate coexistence dynamics under the effect of $k$ for $k = 0.2$ \textbf{(a)}, $k = 1$ \textbf{(b)} and $k = 5$ \textbf{(c)}. In the top sub-panels we show the dynamics of 10 strain frequencies. We choose $\beta = 4$, and basic reproduction number $R_0 = 2$. It can be seen that as $k$ increases, the system tends to its stable state faster. In figures \textbf{(a, b, c)}, three strains 2, 4, 7 coexist after a long time. (\href{https://github.com/lthminhthao/quasineutral_coinfection_N-strains_multiple-variations/blob/master/2_variation_in_coinfection-clearance-rates.ipynb}{Data \& Codes})}\label{A3}
\end{figure}

Two important features of the dynamics in the case $\mathcal{A}=\{3\}$ are: 
\begin{itemize}
\item Large possibilities of stable coexistence steady states.
\item The parameters of the neutral models affect only the speed of the dynamics, given by $\Theta_3$, but not the qualitative behavior. The latter depends only on the symmetric part of the perturbation $U=\left(u_{ij}\right)$.
\end{itemize}

 For an illustration of this case, we take the following example.\\ 
We consider a system of $N=10$ strains with $\mathcal{A}=\left\{3\right\}$. 
In figure \ref{A3}, we plot strains frequencies for multiple values of $k$ showing that the {\it same} coexistence equilibrium of $3$ strains is achieved with a speed dependent on $k$. Note that a similar effect would hold if we vary $R_0$.

We note that the speed of the dynamics is given by
$$\Theta_3 = \dfrac{\gamma T^*D^*}{|P|} = \dfrac{\gamma T^*}{2T^* + \frac{S^*}{k}\left(1 + \frac{m}{m + \beta k T^*}\right)},$$
which increases with  $k$. 
Thus, in this case, increasing $k$  only multiplies whole matrix $\Lambda$ by a factor, which increases  the speed of the convergence to the stable state of coexistence.

\subsubsection{Variation in transmission and coinfection clearance rates, $\mathcal{A}=\{1,3\}$}
\label{sec:A13}
When $\mathcal{A}\subset\{1,2,3\}$, the perturbations occur both on traits $\{1\}$ and $\{2\}$ leading on competitive exclusions and on $\{3\}$ leading on coexistence. Thus the relative weights of the perturbation,  depending on the parameters on the neutral model, will affect the {\em qualitative} outcomes of the  dynamics among strains.\\

Hence, unlike in section \ref{sec:A3}, the qualitative behavior  does not depend only on the pertubations $b_i$, $\nu_i$ and $u_{ij}$ but also on the values of the parameters of the neutral model. A precise generic result is out of the scope of this paper. \\
For simplicity, consider the case $\mathcal{A}=\{1,3\}$.
From the formula $\lambda_i^j = \Theta_1 (b_i - b_j) + \Theta_3 (- u_{ij} -  u_{ji} + 2 u_{jj})$, we infer that  the larger the ratio $\frac{\Theta_3}{\Theta_1}$ is the more chance a coexistence may happen. From $$\dfrac{\Theta_3}{\Theta_1} = \dfrac{R_0}{2}\dfrac{\gamma}{\frac{m}{k} + \beta T^*},$$
we see in particular that this ratio increases with $k$. This  is illustrated in the figure \ref{A4}.
 We  see in this figure that shifting $k$ alters qualitatively the dynamics and the ultimate outcome among  strains. In figure \ref{A4}\textbf{(a)} $k = 0.1$ and  the only winner is strain 8, whereas for $k=1$, figure \ref{A4}\textbf{(b)}, then the winners turn to strain 3 and 6. Finally, for $k=3$, the outcome in figure \ref{A4} \textbf{(c)} is the coexistence of strains 2, 4 and 7.\\
Note that the short explanation above, gives only an overview of the phenomena and do not explain  all the details. For instance, we observe that the set of coexistent species depends on the value of $k$ in a complex maner.
\begin{figure}[htb!]
	\centering
	\captionsetup{singlelinecheck=off}
	\includegraphics[width = 1.1 \textwidth]{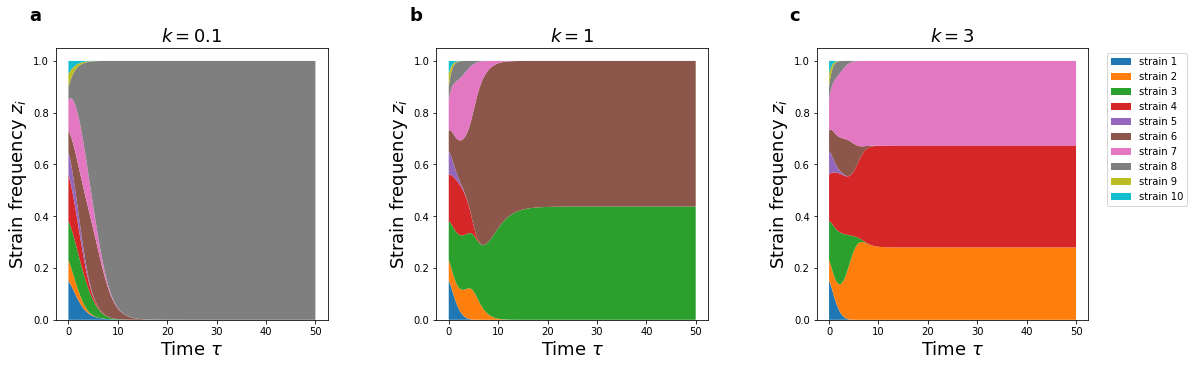}
	\caption{\textbf{The final ecological outcome can shift with changing vulnerability to coinfection, when strains vary in transmission and coinfection clearance rates.} We illustrate coexistence dynamics for $k = 0.1$ \textbf{(a)}, $k = 1$ \textbf{(b)}, $k = 3$ \textbf{(c)}. In the top sub-panels we show the dynamics of 10 strain frequencies. We choose $\beta = 4$, $R_0 = 5$ and $\gamma = 0.5$. 
	We keep the initial values in \ref{A3} and the matrix of value's $b_i$ as follows, in which $b_i$ is in cell $i$-th $\protect\begin{pmatrix}
	0 & -0.2 & 0.125 & -0.125 & 0.225 & 0.75 & 0.5 & 1.25 & -0.175 & 0
	\protect\end{pmatrix}.$ We plot for multiple values of $k$ respectively equal to $0.1$, $1$ and $3$, to show effects of $k$ to transient phenomena. It can be seen that as $k$ increases, changes the survival strains. (\href{https://github.com/lthminhthao/quasineutral_coinfection_N-strains_multiple-variations/blob/master/3_variation_in_coinfection-clearance-rates_transmission-rates.ipynb}{Data \& Codes})}
	\label{A4}
\end{figure}

\subsection{Variation in transmission probability from mixed carriage may lead to cycles among  strains.}\label{sec4.5}
In this subsections, we make simulations in which variation at least in transmission probability from mixed carriage, $4 \in \mathcal{A}$. Despite of the antisymmetric matrix of pairwise invasion fitness $\Lambda = \left(\lambda^j_i\right)_{i,j}$ as in cases $\mathcal{A} = \{1\}$ and $\mathcal{A} = \{2\}$, there are many long time behaviors that may occur in this case. In \cite{Chawanya}, one proves that there can be coexistence with higher possibility than competitive exclusive of one strain. However, when there are combinations with other trait perturbation, the outcome survival can shift due to neutral parameters, which will be presented in the next subsections \ref{sec:A124} and \ref{sec4.6}.
\subsubsection{Variation in transmission rates and transmission probability from mixed carriage,  $\mathcal{A} = \{1,2,4\}$}
\label{sec:A124}
We make simulations when perturbations in transmission rates $\beta_i$ and transmission capacity of a strain by a host co-colonized. From \eqref{2.47} when $\mathcal{A}\subset\{1,2,4\}$, the equations for this case can be written as
\begin{equation*}
\dfrac{dz}{d\tau} = z\cdot\left(\Lambda z\right)
\end{equation*}
where anti-symmetric matrix $\Lambda$ is the invasion fitness matrix with 
$$\Lambda_{i}^j = \Theta_1\left(b_i - b_j\right) +\Theta_2\left(\nu_j-\nu_i\right)+ \Theta_4\left(\omega^i_{ij} - \omega^j_{ji}\right).$$ 

this type of replicator equation is known as a zero sum games tournaments from which several results are known (see \cite{Chawanya}). In particular the set $E$ of persistent strains is unique, regardless the initial values, and the number of persistent strains is odd. 
\begin{itemize}
\item If this odd number is 1, then the  competitive exclusion principle occurs, as we saw above in the particular case $\emptyset\neq \mathcal{A}\subset \{1,2\}$.
\item If this odd number is above 1, the system is conservative and structurally unstable: there is a family of cycles around a single steady states of these strains $E$. This is possible thus the effect of a perturbation in $\omega^i_{ij}$ (i.e. $4\in\mathcal{A})$.\\
\end{itemize}

As in the section \ref{sec:A13}, the parameters of the neutral models affect the relative weight of the pertubation and therefore the type of outcomes.\\

In figure \ref{A5}, we take $\mathcal{A}=\{1,4\}$. We have   $$\dfrac{\Theta_4}{\Theta_1} =  \dfrac{k\left(R_0 - 1\right)}{1 + k\left(R_0 - 1\right)} = \dfrac{1}{\mu + 1}.$$
Hence, changing $\mu = \dfrac{1}{k\left(R_0 - 1\right)}$ shift the dynamics outcome.
 When $\mu = 0.6$, i.e. small enough, makes $\dfrac{\Theta_1}{\Theta_4}$ large yielding to a cycle  of 3 persistent strains. Conversely, $\mu = 1.2$  large enough leads to the competitive exclusion.
\begin{figure}[htb!]
	\centering
	\captionsetup{singlelinecheck=off}
	\includegraphics[width = 0.8 \textwidth]{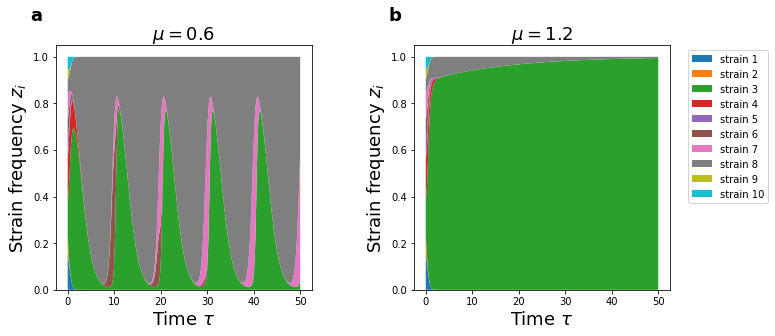}
	\caption[]{\textbf{The long time behavior can shift with changing the co-infection prevalence $\left(\mu=\frac{I}{D}\right)$  rate, when strains vary in transmission rate and transmission coefficients from mixed carriage.} We illustrate coexistence dynamics for $\mu = 0.6$ \textbf{(a)} and $\mu = 1.2$ \textbf{(b)}. We choose $\beta = 3$, $\gamma = 1.2$, $R_0 = 2$ and $b=\protect\begin{pmatrix}
	0.3 & -0.8 & 2.4 & -0.5 & 0.9 & 2 & 1.2 & 1 & -0.7 & 0.5
	\protect\end{pmatrix}.$ 
	It can be seen in this case that an  increase in $\mu$ (reducing co-infection prevalence), shifts the cycle of persistent strains in \textbf{(a)}, to the competitive exclusion of strain 3-with biggest transmission rate $\beta_i$ in \textbf{(b)}. (\href{https://github.com/lthminhthao/quasineutral_coinfection_N-strains_multiple-variations/blob/master/4_variation_in_transmission-rates-prob.ipynb}{Data \& Codes})}
	\label{A5}
\end{figure}
\subsubsection{Variation in coinfection clearance rates and transmission probability from mixed carriage, $\mathcal{A} = \{3,4\}$}\label{sec4.6}
When there are perturbations in coinfection clearance rates and transmission probability from mixed carriage, pairwise invasion fitness matrix $\Lambda$ is not anti-symmetric anymore. The analysis of the sections \ref{sec:A3} and \ref{sec:A124} suggest that, depending on the ratio $\dfrac{\Theta_4}{\Theta_3}$, we may observe  coexistence through stable steady states if $\dfrac{\Theta_4}{\Theta_3}\ll1$ and through cycles if $\dfrac{\Theta_4}{\Theta_3}\gg 1$.  
We have the explicit formula
$$\dfrac{\Theta_4}{\Theta_3} = \dfrac{2m}{\gamma} = 2\left(1 + \dfrac{r}{\gamma}\right),$$ 
then, depending on the values of $r$ and $\gamma$, we can have other interesting phenomena.\\
We make simulations for two cases of $r$, susceptible host recruitment rate. When $r = 0.2$ small enough, we obtain the coexistence of 3 strains, that is structurally stable, although it oscillates in a first period of time. When $r = 3$ large enough, the coexistence of strains becomes structurally unstable. It can be seen that, the number of coexistent strains is 3, which is odd as mentioned.
\begin{figure}[htb!]
	\centering
	\captionsetup{singlelinecheck=off}
	\includegraphics[width = 0.8 \textwidth]{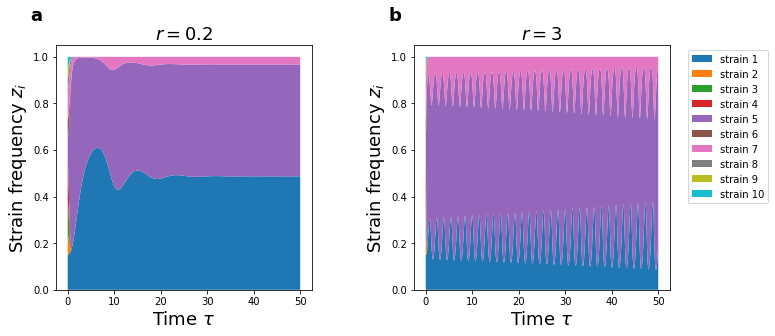}
	\caption[]{\textbf{The long time behavior can shift with changing neutral transmission rate, when strains vary in transmission rate and transmission coefficients from mixed carriage.} We illustrate coexistence dynamics for $r = 0.2$ \textbf{(a)} and $r = 3$ \textbf{(b)}. We choose $k = 3$, $R_0 = 2$, $\gamma = 1$ and reuse the initial values in figure \ref{A3}. 
		As $r$ increases, the stable state coexistence of 3 strains in \textbf{(a)}, shifts to the unstable trajectory of strains in \textbf{(b)}. (\href{https://github.com/lthminhthao/quasineutral_coinfection_N-strains_multiple-variations/blob/master/5_variation_in_coinfection-clearance-rates_transmission-prob.ipynb}{Data \& Codes})}
	\label{A6}
\end{figure}
\subsection{Summary of multi-strain outcomes by studying the slow system}
In general, when there are many traits varying among similar among similar strains, the long time behaviour may lead to complex outcomes. 
However, in cases of single trait perturbations only $\mathcal{A}=\{j\}$ , the outcome is often easier to understand.
\begin{itemize}
\item If $j\in\{1,2,3,4\}$ (the cases explored within this section), then we can proof or refer to existing result to explore the dynamics. 
In particular, in these cases, the values of the pairwize fitness $\Lambda_{i^j}$ do  not depends on the parameters of the neutral system and then  \\{\bf If $\mathcal{A}=\{j\}$ with $j\in\{1,2,3,4\}$ the qualitative outcome do not depends on the parameters of the Neutral model. }\\
\item If $j=5$ (perturbation in $k_{ij}$ only) the outcomes is more complexe and an introduction to the phenomena is given in \cite{Gjini1}. 
In particular, the pairwize fitness reads $\Lambda_{i^j}=\alpha_{jj}-\alpha_{ji}+\mu(\alpha_{ji}-\alpha_{ij}$ does depends on the parameter $\mu=\dfrac{I^*}{D^*}=\dfrac{1}{k(R_0-1)}$.
It follows that 
\\{\bf if $A=\{5\}$ the qualitative outcome do not depends on the parameters of the Neutral model.}
\end{itemize}
In the table \ref{tab:table3} we give a summary  the behavior with when there is a  perturbation in only one trait.\\

In general, when there is perturbation in several traits, the qualitative outcomes result in a complex  manner of each single case. 
The weight of each perturbation in the $\lambda_i^j$'s, and thus on the qualitative dynamics, is  govern exactly by the $\Theta_i$'s which do depend on the parameter of the neutral system.
Hence, if  the  ratio between the $\Theta_i$ is changing we may observed a change in the qualitative dynamics. Hence, a change in the parameter of the neutral models ($k$, $R_0$, $r$, $\gamma$, $\beta$) may affect not only the speed of the dynamics but also, and in a complex manner, the type of the dynamics. 
\begin{table}[htb!]
	\caption{\textbf{Summary of outcome type for each case of single traits varying}
	Note that all the $\Theta_i$ admits the same denominator $|P|=\left(2{T^*}^2 - I^*D^*\right)>0$. Since, only the ratio between the $\Theta_i$ impact  the qualitative behavior, we represent the values of $|P|\Theta_i$. 
	\label{tab:table3}}
\begin{center}
	\scalebox{1}{
		\begin{tabular}{ l p{7.5cm} p{3.5cm} p{4cm}} 
			\hline
			& \textbf{Trait varying} & $\textbf{Formula~of}$ $|P|\Theta_i$ & \textbf{Type of dynamics} \\
			\hline
			&&&\\
			1. & Transmission rates $\beta_i $ & $ 2\beta S^*{T^*}^*$ & Competitive exclusion \\
			&&&\\
			\hline
			&&&\\
			2. & Single infection clearance rates $\gamma_{i} $ & $\gamma I^*\left(I^* + T^*\right)$ & Competitive exclusion \\ 
			&&&\\
			\hline
			&&&\\
			3. & Co-infection clearance rates $\gamma_{ij}$ & $\gamma T^*D^*$ & Possibility of Coexistence\\ 
			&&&\\
			\hline
			&&&\\
			4. & Transmission probability from mixed carriage $p_{ij}^s $ & $2m T^*D^*$ & 
			Family of cycles\\
			&&&\\
			\hline
			&&&\\
			5. & Co-colonization interaction factor via altered susceptibilities, $k_{ij}$ & $\beta T^*I^*D^*$ & Anything \\
			&&&\\
			\hline
			&&&\\
		\end{tabular}
	}
\end{center}
\end{table}

\section{Concluding remarks}\label{sec6}
This mathematical study provides a fundamental advance in understanding analytically quasi-neutral dynamics between multiple strains in a co-infection system. Until now, explicit and general derivations of coinfection dynamics among $N$ strains are very rare in the literature \cite{adler1991dynamics,Madec2}. Previous studies have considered $N=2$, $N=3$ or $N$-strain dynamics without coinfection, typically with variation in just one fitness dimension. Others have sketched the conceptual framework linking neutrality with non-neutral dynamics \cite{Lipsitch2009}. Here, we go beyond the state of the art, and provide a full analytical characterization of the coinfection dynamics among $N$ strains that vary along multiple fitness dimensions, under the assumption that such variation is relatively small. We complete a series of studies based on slow-fast dynamics, made explicit, for linking neutral and non-neutral dynamics in interacting multi-strain pathogens \cite{Gjini,Gjini1,Madec2}. 

Naturally in this endemic compartmental model, infectious strains compete for susceptible and singly-colonized hosts, which are the only resources that can favour their growth and propagation. The different traits provide each strain with variable fitness advantages or disadvantages in exploiting such dynamic resources in the system, and interact together to shape multi-strain selection. We establish some remarkable results  by simplifying the dynamics when small perturbations arise in the clearance rates, transmission rates, within-host competitiveness coefficients, as well as co-colonization interaction factors between strains. We derive the corresponding slow-fast form for the global dynamics, the system of strain frequencies with its relevance, and provide the formal approximation for solutions having error estimates. We reduce the complexity of $N^2 + N + 1 $ equations at the origin to the $N$-equation replicator model, which reduces substantially time for computation.

Instead of studying concurrently all compartmental variables, our approach separately considers the neutral system and the perturbation components, then integrates them at the final stage. It would be possible to obtain a solution immediately for the whole emergence within perturbations in all traits. Nevertheless, such an undertaking in our view would involve many massive and complicated manipulations, and hence constitute a more difficult route than the one chosen here. This difficulty led us to the main lemma, Lemma \ref{glue_lemma}. This result enables us to integrate all particular cases for the most general problem. It only leaves us concrete special cases, with the same structure, but simpler. 

As a first step, we comprehend the neutral model and deduce the globally asymptotically stable state of variables $(S, T, I)$ by $(S^*, T^*, I^*)$, which give us a conservation law for global quantities in the co-infection system, reached in a fast time scale. The expansion theorem in \cite{Ver} plays as the first chain to acknowledges connectivity between neutrality and slow-fast system. Thanks to new variables $z_i$, denoting strain frequencies, and the new time-scale $\tau = \epsilon t$, understanding the emergent model now becomes an exploration of the so-called replicator system for $\{z_i\}_{1 \leq i \leq N}$. This derivation makes sense, in light of Tikhonov's theorem. The perturbation is consequently well approximated, which helps us to explicitly demonstrate error estimates in term $O(\epsilon)$ as well. 

Concerning the system of strain frequencies, we find out and work in the invariant set $\{u \in \mathbb{R}^{N+}: z_1 + \dots + z_N =1\}$. In general, by interpreting fitness numbers, the closing equations at each section become special instances of the same replicator system of $\{z_i\}_{ 1 \leq i \leq N}$. This enables us to study the relative dominance of strains, longtime scenarios of dynamics and other important properties. Notably it appears such a replicator system leads to the disease-free equilibrium $E_1 = (1,0,\dots,0)$ under certain conditions. This approach gives an essential and sufficient condition for the linearly asymptotically stable of trivial steady state $E_i$, in which all of the fitness numbers must be negative. Another remarkable sequel is that there is at most one $\mathcal{I}$-coexistence solution for any nonempty subset $\mathcal{I}$ given.

It is exciting to envision how this approach could be extended to other epidemiological models of multi-strain dynamics. An essential requirement is that their embedded neutral system admits a central manifold which is globally stable. The challenge would then be to identify the equations governing slow motion on this manifold in each specific model. Until now we have not considered a spatial component to the multi-strain dynamics. A further perspective is considering space and a diffusion model for the replicator equation (e.g see \cite{Bratus}). Many more extensions and model applications to data in an explicit manner should be now within reach in the near future. As argued in \cite{Madec2,gjini2021towards}, this coinfection model and its dynamics could also be translated by analogy to other biological scales, e.g. the colonization dynamics of multi-species communities \cite{Allesina5638,Grilli} or gut microbiota within host \cite{Faust}, which would open new frontiers for application, interpretation and computational tool development.

\bibliographystyle{plain}
\bibliography{references}

\end{document}